\newtheorem{Statement}{Statement}
\newtheorem{Proposition}[Statement]{Proposition}
\newtheorem{Theorem}[Statement]{Theorem}
\newtheorem{Conjecture}[Statement]{Conjecture}
\newtheorem{Claim}[Statement]{Claim}
\newtheorem{Lemma}[Statement]{Lemma}
\title{Rainbow bases in matroids}
\author{Florian H\"orsch\footnote{ Most of the work was done while this author was part of TU Ilmenau, Germany.}\\ CISPA, Saarbrücken, Germany \\ Florian.Hoersch@cispa.de\\
\bigskip 

Tom\'a\v{s} Kaiser  \footnote{ Supported
by project GA20-09525S of the Czech Science Foundation.}\\ University of West Bohemia, Plze\v{n}, Czech Republic \\ kaisert@kma.zcu.cz\\ 

\bigskip

Matthias Kriesell \\ TU Ilmenau,  Germany \\ matthias.kriesell@tu-ilmenau.de}
\begin{document}

\maketitle

\begin{abstract}
Recently, it was proved by B\'erczi and Schwarcz that  the problem of factorizing a matroid into rainbow bases with respect to a given partition of its ground set is algorithmically intractable. On the other hand, many special cases were left open.

We first show that the problem remains hard if the matroid is graphic, answering a question of  B\'erczi and Schwarcz. As another special case, we consider the problem of deciding whether a given digraph can be factorized into subgraphs which are spanning trees in the underlying sense and respect upper bounds on the indegree of every vertex. We prove that this problem is also hard. This answers a question of Frank.

In the second part of the article, we deal with the relaxed problem of covering the ground set of a matroid by rainbow bases. Among other results, we show that there is a linear function $f$ such that every matroid that can be factorized into $k$ bases for some $k \geq 3$ can be covered by $f(k)$ rainbow bases if every partition class contains at most 2 elements.
\end{abstract}
\section{Introduction}
This article deals with some problems on factorizing the common ground set of two matroids. Any undefined notions can be found in Section \ref{preldef}.

Several problems involving common bases of two matroids have played a significant role in the history of combinatorial optimization. Nevertheless, the algorithmic complexity of the question whether the common ground set of two given matroids can be factorized into a collection of $k$ common bases remained open for a long time. An important result of Harvey, Kir\'aly and Lau \cite{hkl} shows the equivalence of this problem to a seemingly much less general one. Namely, they show that this general problem is equivalent to its special case when one of the two matroids is a {\it unitary partition matroid}, that  is, the direct sum of uniform matroids of rank 1. Such a matroid can be thought of as a partition of the elements of the first matroid and a common basis of both matroids is a basis of the first matroid which is {\it rainbow} with respect to that partition, meaning it contains at most one element of each partition class.

 The above problem hence reduces to the question whether given a matroid $M$ and a partition of $E(M)$, we can factorize $M$ into $k$ rainbow bases.

Unfortunately, this latter problem has been proven to be algorithmically intractable by B\'erczi and Schwarcz \cite{bs}. Firstly, they showed that this problem cannot be solved efficiently in the independence oracle model and secondly, they showed that some NP-hard problems appear as special cases of this problem.

Despite the results of B\'erczi and Schwarcz showing the intractability of the general matroid factorization problem, there are still many natural combinatorial problems appearing as special cases of the matroid factorization problem whose tractability is interesting in its own right. One of the most important examples is the problem of factorizing the arc set of a given digraph $D$ into a collection of arc-disjoint $r$-arborescences for some given root vertex $r \in V(D)$. This problem has been dealt with by Edmonds \cite{e4}. It can be viewed as the special case of the problem of factorizing a given matroid into rainbow bases with respect to a given partition of its elements, where the  matroid is the cycle matroid of a graph $G$ and every partition class of the partition is associated to a vertex $v \in V(G)$ in such a way that every vertex of $G$ except one is associated to exactly one partition class and the edges of each partition class induce a star at their associated vertex. The crucial observation of Edmonds is that in this setting, a factorization of the matroid into $k$ rainbow bases exists if and only if two natural necessary conditions are satisfied. Firstly, a factorization of the matroid into $k$ bases must exist when disregarding the partition of its element set. Such a matroid is called a {\it $k$-base matroid} and in the graphic case, we speak of a {\it $k$-multiple tree}. Note that 2-base matroids are also referred to as block matroids. Secondly, every partition class needs to be of size at most $k$.

These two conditions do not continue to guarantee a factorization into rainbow bases when the conditions on the particular structure of the matroid and the classes of the partition are dropped. As mentioned by Chow \cite{chow} and by Fekete and Szab\'o (\cite{fs},\cite{fsj}), for a first well-known counterexample, we can consider the cycle matroid of $K_4$ together with the partition in which every pair of non-adjacent edges forms a partition class. More generally, it is not hard to see that  counterexamples can be obtained by considering the cycle matroid of an odd wheel together with the unique partition of its  element set in which each partition class contains the elements corresponding to one spoke edge and the rim edge that is antipodal to the endpoint of the spoke edge on the outer cycle. For an illustration, see Figure \ref{drftg}.

\begin{figure}[h]
    \centering
        \includegraphics[width=.8\textwidth]{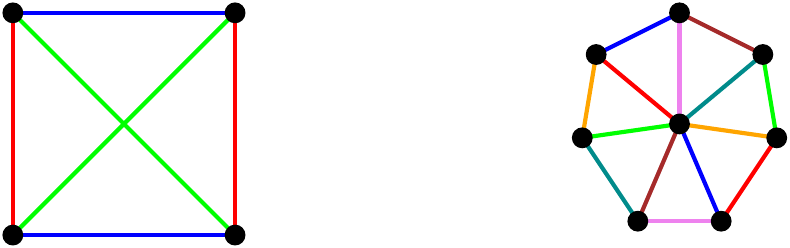}
        \caption{A $K_4$ and a wheel on 8 vertices together with the described edge partition.}\label{drftg}
\end{figure}

Brualdi and Hollingsworth \cite{bh} conjectured that $K_4$ is the only negative instance among a class of similar graphic matroids with edge partitions.  The conjecture states that if $M$ is the cycle matroid of the complete graph $K_n$ for some even integer $n \geq 6$ and every partition class corresponds to a perfect matching of this graph, then a factorization of $M$ into rainbow bases exists. Several partial results on this conjecture have been obtained, see \cite{h}, \cite{ps} and \cite{gkmo}.

In the light of the results of B\'erczi and Schwarcz and of Edmonds, it is natural to ask about the complexity of factorizing a given matroid into rainbow bases with respect to a given partition when the conditions are somewhat relaxed in comparison to Edmonds' setting. The first contribution of this article is to provide two negative results in that direction. \footnote{After this
paper was submitted for publication, we were informed by K. Bérczi that
related results were independently obtained in the paper \cite{bck} by Bérczi,  Cs\'aji, and Kir\'aly. This
includes the $k=2$ case of Theorem 1.}

In the first setting, we relax the condition of every partition class inducing a star at a vertex and only maintain the condition of the matroid being graphic. In a graph-theoretical formulation, we consider the problem RST$k$F in which we are given a $k$-multiple tree $G$ and a partition of $E(G)$ and need to decide whether $G$ can be factorized into a collection of $k$ rainbow spanning trees. Observe that this setting includes the instances depicted in Figure \ref{drftg}. Using a reduction that builds on the example of $K_4$ depicted in Figure \ref{drftg}, we provide the following result.

\begin{Theorem}\label{hard1}
RST$k$F is NP-hard for any $k \geq 2$.
\end{Theorem}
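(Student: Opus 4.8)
The plan is to reduce from an NP-hard satisfiability problem; a convenient choice is monotone not-all-equal $3$-SAT, which is NP-hard already when every clause is a set of three distinct, non-negated variables. Given a formula $\varphi$ with variables $x_1,\dots,x_n$ and clauses $C_1,\dots,C_m$, I would construct in polynomial time a $k$-multiple tree $G=G_k(\varphi)$ together with a partition $\mathcal P$ of $E(G)$ into classes of size at most $k$ so that $G$ factorizes into $k$ rainbow spanning trees if and only if $\varphi$ has a not-all-equal satisfying assignment. The guiding principle is that in any factorization into $k$ spanning trees a colour class of size exactly $k$ must be split so that each of the $k$ trees receives exactly one of its edges; placing edges lying in distant parts of $G$ into a common class therefore turns colour classes into \emph{consistency constraints}, and a rainbow factorization becomes a $k$-colouring of $E(G)$ whose colour classes are spanning trees and which respects these constraints.

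The clause gadgets are where the $K_4$ example of Figure~\ref{drftg} enters. For $k=2$, recall that $K_4$ with its three perfect matchings as colour classes is a $2$-multiple tree whose only rainbow spanning trees are the four stars, and that no two stars are edge-disjoint; so $K_4$ carries no rainbow factorization, yet it is a \emph{minimal} such obstruction, in the sense that ``relieving'' one of the three constraints (pre-committing one edge of one matching to a tree from outside the gadget) makes a rainbow factorization possible. I would use, for each clause $C_j=\{x_a,x_b,x_c\}$, a copy of this $K_4$ in which the three matchings are linked, through three further colour classes, to ``output'' edges of the gadgets for $x_a$, $x_b$ and $x_c$; the state of a variable then dictates which edge of the associated matching of the $K_4$ copy is forced out, and the geometry of $K_4$ is exactly such that the remaining local colouring extends to spanning forests precisely when the three incoming states are not all equal. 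For general $k$ I would replace the bare $K_4$ by an appropriate member of a family of ``weighted $K_4$'s'': a $4$-vertex multigraph whose edge multiplicities are tuned to $k$ so that, with its three classes of size $k$, it is again a $k$-multiple tree and again a minimal obstruction — the point being that the feasibility condition for covering it by edge-disjoint ``star'' bases fails for a parity reason that one external degree of freedom can repair.

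The variable gadgets can be taken to be ``wires'': paths of doubled edges (a path $v_0v_1\cdots v_\ell$ with every edge doubled is a $2$-multiple tree, thickened to a $k$-multiple tree in the same weighted style) in which consecutive edges are linked by colour classes, forcing a factorization to propagate one of exactly two patterns along the wire; the two patterns are read as the truth value of the variable, and the wire is branched so that this value reaches every clause containing the variable. Assembling the wires and clause gadgets and routing the output edges yields $G$, and it remains to check both directions of the equivalence — from a satisfying assignment, build the factorization gadget by gadget; from a factorization, read off the variable states at each wire and use the local analysis of the (weighted) $K_4$ to see that every clause is not-all-equal — together with the bookkeeping that $G$ is a $k$-multiple tree and all classes have size at most $k$. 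The main obstacle, I expect, is exactly this local analysis: one must verify that the (weighted) $K_4$ embedded into the rest of $G$ at its linking edges admits a rainbow completion \emph{precisely} on the not-all-equal inputs, with the all-true and all-false inputs reproducing the unavoidable obstruction of the isolated gadget, and choosing the multiplicities so that this holds for all $k\ge 2$ simultaneously while keeping the global graph a $k$-multiple tree. Once that is in place, the reduction and hence Theorem~\ref{hard1} follow.
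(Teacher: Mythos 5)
Your outline for $k=2$ is close in spirit to the paper's argument (a reduction from monotone NAE\,3-SAT with $K_4$ clause gadgets and colour classes acting as consistency links), but as a proof of Theorem~\ref{hard1} for all $k\geq 2$ it has two genuine gaps. The central one is the clause gadget for $k\geq 3$: you postulate a ``weighted $K_4$'' on four vertices with three colour classes of size $k$ that is a $k$-multiple tree, admits no rainbow factorization, and becomes factorizable exactly when one external degree of freedom is supplied on not-all-equal inputs — but you neither construct such a multigraph nor verify these properties, and you yourself flag this as ``the main obstacle.'' The same concern applies to your variable wires: a path with doubled edges is only a $2$-multiple tree, and since the gadgets are glued at cut vertices the whole graph is a $k$-multiple tree only if each gadget is, so the wire must carry multiplicity $k$ on every path edge; it is then far from clear that colour classes of size at most $k$ can force the factorization to propagate only \emph{two} patterns among the $k$ trees, which your NAE encoding needs. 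The paper avoids this entire difficulty for $k\geq 3$ by changing the source problem: it reduces from $k$-colourability ($k$COL, Proposition~\ref{colhard}) rather than NAE\,3-SAT, using one $(k-1)$-marihuana leaf per edge of the input graph (Section~\ref{genk3}); there the natural ``state'' of a vertex is one of $k$ trees, so no binary restriction is needed, and the local analysis is exactly Proposition~\ref{long}. If you want to keep a SAT-based reduction for all $k$, you must actually exhibit and analyse your tuned gadget; nothing in your sketch shows it exists.

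The second gap is the format of the partition. RST$k$F, as defined, requires a $k$-\emph{uniform} partition of $E(G)$, whereas your construction only promises classes of size at most $k$; what you would prove is hardness of the relaxed problem (the analogue of GRST2F in Section~\ref{genk2}). Repairing this is not automatic: you cannot merge leftover small classes without creating unintended constraints, and you cannot pad with dummy elements without changing the graph. The paper closes this gap for $k=2$ with a separate step (Section~\ref{concfac}): take two copies of $G$ identified at one vertex, duplicate each size-2 class inside each copy, and pair each singleton with its twin across the copies, then check that rainbow factorizations transfer back and forth; for $k\geq 3$ its construction is $k$-uniform by design. Your proposal needs either this doubling argument or a construction that is $k$-uniform from the start; as written, the reduction does not target the stated problem.
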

This answers a question of B\'erczi and Schwarcz \cite{bs}.
\medskip

Next, we deal with a setting in which, in comparison to Edmonds' setting, the condition of the partition matroid being unitary is relaxed. In a graph-theoretical formulation, we are given a digraph $D$ that is an orientation of a $k$-multiple tree and such that for every $v \in V(D)$, there is an integer $\alpha_v$ such that $d_D^-(v)=\alpha_v k$. We then need to decide whether $D$ can be factorized into a collection of subgraphs $(H_1,\ldots, H_k)$ such that each $H_i$ is a spanning tree in the underlying sense and satisfies $d_{H_i}^-(v)=\alpha_v$ for all $v \in V(D)$. We denote this problem by BST$k$F. Frank \cite{f1} conjectured that this problem could be dealt with in a similar way as the one in Edmonds' setting. This conjecture was refuted by Kir\'aly \cite{t} by pointing to a result of Fekete and Szab\'o \cite{fs}. Later, Frank \cite{book} asked about the computational complexity of BST$k$F. We give the following answer to this question using Theorem \ref{hard1} for a reduction.

 \begin{Theorem}\label{hard2}
BST$k$F is NP-hard for any $k \geq 2$.
\end{Theorem}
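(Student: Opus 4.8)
The plan is to reduce from RST$k$F, which is NP-hard by Theorem~\ref{hard1}. So fix a $k$-multiple tree $G$ and a partition $\mathcal P$ of $E(G)$; I want to build, in polynomial time, a digraph $D$ that is an orientation of a $k$-multiple tree with $d^-_D(v)$ divisible by $k$ for all $v$, so that $D$ has a factorization into $k$ spanning trees (in the underlying sense) respecting the indegree bounds $\alpha_v = d^-_D(v)/k$ if and only if $G$ has a factorization into $k$ spanning trees that are rainbow for $\mathcal P$. It will be convenient, and with a little care harmless, to assume that every class of $\mathcal P$ has exactly $k$ elements: a class of more than $k$ elements makes the RST$k$F instance trivially negative, smaller classes can be padded, and then $|\mathcal P| = |E(G)|/k = |V(G)|-1$.

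To construct $D$, first orient $G$ canonically: take a decomposition of $G$ into $k$ edge-disjoint spanning trees, root them all at a common vertex $r$, and orient each edge from parent to child in the tree containing it. Then every vertex other than $r$
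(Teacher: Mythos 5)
Your proposal stops mid-construction, and the part that is present does not yet contain the idea that makes the reduction work. Orienting $G$ by taking a decomposition into $k$ spanning trees rooted at a common vertex $r$ and directing parent-to-child gives every vertex except $r$ indegree exactly $k$, so the bounds $\alpha_v=1$ (and $\alpha_r=0$) merely force each factor to be an $r$-arborescence. That is exactly Edmonds' setting: such a factorization exists whenever $G$ is a $k$-multiple tree, completely independently of $\mathcal P$. Since the indegree bounds of BST$k$F live on vertices while the classes of $\mathcal P$ are arbitrary edge sets (not stars at vertices), no orientation of $G$ alone can encode the rainbow condition; the equivalence ``$D$ factorizes with the bounds iff $G$ factorizes into rainbow trees'' would simply be false for this $D$ whenever $G$ is a yes-instance of tree packing but a no-instance of RST$k$F (e.g.\ the $K_4$ example from Figure~\ref{drftg}).

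What is missing is auxiliary structure that translates the partition into indegree constraints. The paper does this by subdividing: for each edge $e$ with chosen orientation $uv$ it introduces a vertex $w_e$ receiving $k$ parallel arcs from $u$ and one arc from $v$, and for each class $X\in\mathcal P$ it adds a set $Z_X$ of $k-1$ new vertices, each sending an arc to every $w_e$ with $e\in X$; the bounds are $g(w_e)=2$ and $g\equiv 0$ elsewhere. Then $w_e$ has total indegree $2k$, so each of the $k$ factors must take exactly two arcs into $w_e$; counting the $k(k-1)$ arcs leaving $Z_X$ shows that in each factor exactly one $w_e$ per class is ``served'' by the arc from $v$ rather than by $Z_X$, which is precisely the statement that each factor uses exactly one edge of each class of $\mathcal P$ (Proposition~\ref{compbip} is used for the converse direction). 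Without a gadget of this kind (or some equivalent device), your reduction cannot distinguish rainbow from non-rainbow factorizations, so as it stands the proof has a genuine gap rather than being a variant of the paper's argument.
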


One interesting feature of the examples in Figure \ref{drftg} and also of the negative instances created in the reductions proving Theorems \ref{hard1} and \ref{hard2} is that they always contain some partition classes whose size equals the number of bases we wish to factorize the matroid into. We would like to understand whether the situation gets brighter when the size of the partition classes is small in comparison to the number of bases we wish to factorize the matroid into. Observe that this problem no longer corresponds to factorizing the common ground set of an arbitrary matroid and a unitary partition matroid into common bases. On the other hand, it corresponds to both the factorization of the ground set of an arbitrary matroid into bases which are independent in a unitary partition matroid and to the factorization of the ground set into common bases of the arbitrary matroid and a truncation of the unitary partition matroid. We here give the following, rather optimistic conjecture:

\begin{Conjecture}\label{optim}
Let $M$ be a $k$-base matroid for some positive integer $k$ and let $\mathcal{P}$ be a partition of $E(M)$ such that $|X|\leq k-1$ holds for every $X \in \mathcal{P}$. Then $M$ can be factorized into $k$ rainbow bases.
\end{Conjecture}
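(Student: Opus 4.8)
We do not know how to prove Conjecture~\ref{optim}; we sketch here the approach that currently seems most promising. Since $|E(M)| = k\cdot r(M)$, a collection of $k$ rainbow bases covers $E(M)$ exactly when it partitions $E(M)$, so the conjecture is equivalent to saying that a $k$-base matroid with all partition classes of size at most $k-1$ can be covered by $k$ rainbow bases. The cases $k\le 2$ are immediate, since then every class is a singleton and any factorization of $M$ into $k$ bases is already rainbow. For larger $k$ a tempting idea is to peel off one rainbow basis $B$ and induct on $M\setminus B$; this is delicate, however, because $B$ must meet every class of size exactly $k-1$ while keeping $M\setminus B$ a $(k-1)$-base matroid (the complement of a star in $K_4$ is a triangle, hence not a forest), and in any case such a $B$ need not exist: for $M=U_{2,6}$ with $\mathcal{P}$ consisting of three pairs every class has size $2=k-1$, yet a two-element basis cannot meet all three classes, although $M$ does factor into three rainbow bases. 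A correct argument therefore has to build the rainbow bases more globally.

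The line I would pursue is an augmentation argument. Start from an arbitrary factorization $(B_1,\dots,B_k)$ of $M$ into bases and repair it using the potential $\Phi=\sum_{X\in\mathcal{P}}\sum_{i=1}^k\binom{|B_i\cap X|}{2}$, the number of pairs of elements of a common class lying in a common basis; the goal is to reach $\Phi=0$. If $\Phi>0$ then some class $X$ has two elements $e,f$ in a common basis $B_i$, and since $|X|\le k-1$ there is a basis $B_j$ with $B_j\cap X=\emptyset$. A symmetric basis exchange moving $f$ from $B_i$ to $B_j$ and some $g\in B_j$ from $B_j$ to $B_i$ changes $\Phi$ by $2-|B_i\cap X|-|B_j\cap Y|+|B_i\cap Y|$, where $Y$ is the class of $g$; as $|B_i\cap X|\ge 2$ and $|B_j\cap Y|\ge 1$, this is negative whenever $g$ can be chosen with $Y$ unrepresented in $B_i$. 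It is exactly the strict bound $|X|\le k-1$ that supplies a usable basis $B_j$ — the examples of Figure~\ref{drftg} already show that $|X|\le k$ would not be enough — and one hopes that the resulting slack (there are $k-|X|\ge 1$ choices of $B_j$, and one may also use alternating walks through several bases) always permits a $\Phi$-decreasing move.

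Securing such a move is the crux, and the point at which the argument is genuinely hard. The valid symmetric-exchange partners $g$ of $f$ may all have their class represented in $B_i$, forcing longer alternating walks, or preliminary exchanges that free space in $B_i$, and the bookkeeping must be arranged so that the process terminates while really using $|X|\le k-1$ rather than $|X|\le k$. A complementary and possibly more tractable angle is polyhedral: the point $x_{e,i}=1/k$ lies in the polytope whose integer points are the factorizations of $M$ into $k$ bases, and it satisfies every rainbow inequality $\sum_{e\in X}x_{e,i}\le (k-1)/k$ with strict slack, so the conjecture asserts that this polytope — which does acquire fractional vertices once some class has size $k$ — still contains an integer point when all class sizes are below $k$. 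Finally, in view of the covering results in the second part of the paper, one could instead aim to reduce the number of rainbow bases needed to cover a $k$-base matroid from $f(k)$ to the optimal $k$, which by the first observation above is equivalent to the conjecture.
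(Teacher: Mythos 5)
The statement you were assigned is Conjecture~\ref{optim}, which the paper itself leaves open: it offers no proof, only the remarks that the case $k=2$ is trivial, that for a $k$-base matroid the factorization, packing and covering versions with $k$ rainbow bases coincide, and that the conjecture would follow from the Aharoni--Berger conjecture (Conjecture~\ref{ahconj}), a connection pointed out to the authors by B\'erczi and Kir\'aly. You explicitly do not claim a proof either, so there is no argument to certify and no comparison of proofs to make; what you submitted is a research sketch. Within that sketch, your preliminary observations are correct and consistent with the paper: the equivalence of covering and factorizing (since $|E(M)|=k\cdot r_M(E(M))$, any $k$ rainbow bases that cover $E(M)$ must partition it), the triviality of $k\le 2$, the failure of naive peeling ($U_{2,6}$ with three classes of size $2$ is a valid witness that a single rainbow basis cannot always meet every class of size $k-1$, and deleting a star from $K_4$ shows $M$ minus a basis need not remain a $(k-1)$-base matroid), the arithmetic of your potential $\Phi$ under a symmetric exchange, and the role of Figure~\ref{drftg} in showing that the bound $|X|\le k$ would be false.

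The genuine gap is the one you name yourself: nothing guarantees a $\Phi$-decreasing exchange (or a terminating scheme of alternating exchanges), and local exchange arguments of this type are exactly what is not known to work here; the hardness results of Section~\ref{complex} show that with classes of size $k$ the decision problem is NP-hard, so any successful argument must exploit the strict bound $|X|\le k-1$ in an essentially global way, as you anticipate. One concrete pointer you missed is the reduction the paper does record: a $(k-1)$-bounded partition defines a unitary partition matroid whose ground set is covered by $k-1$ independent sets, so Conjecture~\ref{ahconj} with parameters $k-1$ and $k$ yields a cover by $\max\{(k-1)+1,k\}=k$ common independent sets, which by the size count must be $k$ disjoint rainbow bases; thus your polyhedral or augmentation programme would, if successful in this special case, also bear on that open conjecture, and conversely any attack through Conjecture~\ref{ahconj} inherits its open status. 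In short, your proposal is honest and its side claims check out, but it contains no proof, and neither does the paper.
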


Observe that the case $k=2$ is trivial. We hence focus on the first non-trivial case, namely the partition classes being of size 2. We believe that an answer to the following special case of Conjecture \ref{optim} would already be very interesting.

\begin{Conjecture}\label{optim2}
Let $M$ be a $k$-base matroid for some $k \geq 3$ and let $\mathcal{P}$ be a partition of $E(M)$ such that $|X|\leq 2$ holds for every $X \in \mathcal{P}$. Then $M$ can be factorized into $k$ rainbow bases.
\end{Conjecture}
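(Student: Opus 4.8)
We outline a possible line of attack on Conjecture~\ref{optim2}. The plan is to start from an arbitrary factorization $(B_1,\dots,B_k)$ of $M$ into bases, which exists since $M$ is a $k$-base matroid, and to repair it into a rainbow factorization by a sequence of local exchanges controlled by a potential function. We may assume that every class of $\mathcal P$ has size exactly $2$, as singletons impose no constraint. Call a class $\{e,f\}\in\mathcal P$ \emph{bad} for the current factorization if $e$ and $f$ lie in the same $B_i$, and let $\Phi$ denote the number of bad classes. It suffices to show that whenever $\Phi>0$ the factorization can be modified so that $\Phi$ strictly decreases; iterating then yields a factorization in which every $B_i$ is rainbow. (Note that the naive alternative of peeling off one rainbow basis $B$ and inducting on $k$ is problematic: one would need $M\setminus B$ to remain a $(k-1)$-base matroid, which fails for a general basis — the star of a vertex in $K_4$ is a basis whose complement is a triangle — and the induction cannot be pushed down to $k=2$, where the statement is false.)

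For the decrement step, suppose $\{e,f\}\subseteq B_i$ is bad. By the symmetric exchange property of matroids, for every $j\neq i$ there is an element $g\in B_j$ such that both $B_i-f+g$ and $B_j-g+f$ are bases; the swap $B_i\mapsto B_i-f+g$, $B_j\mapsto B_j-g+f$ destroys the bad class $\{e,f\}$. The difficulty is that $g$, having moved into $B_i$, may have a partner $g'$ already present in $B_i$, so the swap may merely trade one bad class for another. To handle this I would trace the resulting \emph{chain} of forced moves — since each element has at most one partner, the elements a chain visits form an alternating sequence — and prove that with the right choice of target classes the chain terminates with $\Phi$ decreased; the simultaneous-exchange strengthening of the symmetric exchange theorem, which allows one to move a whole subset between two bases at once, looks useful for collapsing such a chain in a single step.

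The hypothesis $k\geq 3$ must enter essentially, and this is where I expect the real work to be. With only two classes, a bad class in $B_1$ forces $f$ into $B_2$, and if every valid swap partner has its own partner already in $B_1$, one is genuinely stuck — this is exactly the $K_4$ obstruction mentioned above and in \cite{bh}. With three or more parts there is a second part available to absorb the incoming element, and one should be able to turn this into a counting argument: the $(k-1)\,r(M)$ elements outside $B_i$ are distributed among the $k-1\geq 2$ other bases, partners come in disjoint pairs, and the exchange partners guaranteed by the symmetric exchange theorem cannot all be simultaneously ``blocked''. An alternative, more global route is to reformulate the question via matroid union and intersection — a rainbow basis is a common basis of $M$ and the rank-$r(M)$ truncation of the partition matroid of $\mathcal P$, and we seek $k$ disjoint such bases — and to look for a Hall- or Edmonds-type feasibility condition specialized to the case where the partition matroid is a direct sum of copies of $U_{1,1}$ and $U_{1,2}$; one might also try to build on the covering result of this paper and merge its $f(k)$ rainbow bases down to exactly $k$.

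The main obstacle common to all of these approaches is the absence of a clean min--max certificate of the kind available in Edmonds' arborescence setting: one has to control the propagation of conflicts essentially by hand and, crucially, identify the structural reason why a bad sub-configuration cannot survive once three parts are available. I would expect the decisive step to be a classification of the minimal obstructions — local configurations of bases and pairs that cannot be repaired by any single exchange — followed by a proof that, unlike the $K_4$ pattern, none of them can occur when $k\geq 3$; alternatively, finding a potential function that is monotone under a sufficiently rich family of exchanges would do the job. Either way, a genuinely new structural insight beyond the exchange bookkeeping seems to be required.
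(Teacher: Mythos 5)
The statement you are addressing is not a theorem of the paper at all: Conjecture~\ref{optim2} is left open there, the authors only proving approximate covering results (Theorems~\ref{k41}, \ref{k412} and \ref{doublecover}) and noting that the conjecture would follow from the Aharoni--Berger Conjecture~\ref{ahconj}, which is itself open. So there is no proof in the paper to compare against, and more importantly your text is not a proof either: it is a research plan in which every decisive step is announced rather than established. The one concrete assertion you make --- that a symmetric exchange of $f$ with some $g\in B_j$ ``destroys the bad class $\{e,f\}$'' --- is correct as far as it goes, but the whole difficulty, as you yourself say, is that $g$ may import a new bad class into $B_i$, and you give no argument that the resulting chain of forced moves terminates, does not cycle, and ends with the potential $\Phi$ strictly smaller. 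Nothing in the symmetric or simultaneous exchange theorems guarantees that the exchange partner $g$ can be chosen to avoid blocked configurations, and local-exchange schemes of this kind routinely get stuck in local optima; the $K_4$ instance (Proposition~\ref{fzf} territory, and the counterexample of \cite{bh}) shows the scheme genuinely fails for $k=2$, and no counting argument is supplied showing that $k\geq 3$ excludes analogous obstructions --- you only ``expect'' one exists.

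In short, the gap is the entire core of the argument: the claim that whenever $\Phi>0$ some sequence of exchanges strictly decreases $\Phi$ is exactly Conjecture~\ref{optim2} in disguise (apply it to the hypothetical stuck factorization), so the proposal is circular at the point where it matters. Your closing paragraph candidly concedes that a classification of minimal obstructions or a new monotone potential would be needed; until such a step is supplied, what you have is a reasonable heuristic discussion of why the conjecture might be true for $k\geq 3$, not a proof, and it is consistent with the current state of the art, where even the much weaker covering statements require the machinery of Section~\ref{cover}.
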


K. Bérczi and T. Király kindly pointed out to us that Conjectures \ref{optim} and \ref{optim2}
are implied by the following conjecture of Aharoni and Berger, stated as
Conjecture 1.4 in \cite{ABZ} and attributed there to \cite{AB} (see
also \cite[Conjecture~1.5]{AH} for a slightly different version):
\begin{Conjecture}\label{ahconj}
If $M$ and $N$ are matroids on the same ground set $E$ such that $E$ can
be covered by $k$ independent sets of $M$ as well as by $\ell$
independent sets of $N$ where $1\leq k\leq\ell$, then $E$ can be
covered by $\max\{k+1,\ell\}$ common independent sets of $M$ and $N$.
\end{Conjecture}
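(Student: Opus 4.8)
We sketch a possible route towards Conjecture~\ref{ahconj}, based on the topological method for independent systems of representatives, with a polyhedral argument as a quantitative fallback.

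First I would make the harmless reduction that, since a cover by $k$ independent sets of a matroid is also a cover by any larger number of independent sets, we may assume both $M$ and $N$ admit a cover by exactly $t:=\max\{k+1,\ell\}$ independent sets; the information that must be retained is that the covering numbers by independent sets satisfy $\mathrm{cov}(M)\le k\le\ell$ and $\mathrm{cov}(N)\le\ell$. The goal is then to partition $E$ into $t$ common independent sets of $M$ and $N$ (a partition is no harder to obtain than a cover, and is what the method produces). Setting $\mathcal C:=\mathcal I(M)\cap\mathcal I(N)$, such a partition is exactly an independent system of representatives, for the family of fibres $V_e:=\{e\}\times[t]$ with $e\in E$, inside the $t$-fold join $\mathcal D:=\mathcal C_1*\cdots*\mathcal C_t$, where $\mathcal C_i$ is a copy of $\mathcal C$ on the ground set $E\times\{i\}$. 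By the topological Hall-type theorem of Aharoni and Berger, it suffices to show that for every $A\subseteq E$ the restricted complex $\mathcal D[\bigcup_{e\in A}V_e]$, which is the $t$-fold join of $\mathcal C[A]=\mathcal I(M|A)\cap\mathcal I(N|A)$, has homological connectivity at least $|A|$. Since homological connectivity is superadditive under joins, and since restriction does not increase covering numbers (so $\mathrm{cov}(M|A)\le k$ and $\mathrm{cov}(N|A)\le\ell$), the whole conjecture reduces to the following clean statement about matroid-intersection complexes: if $M'$ and $N'$ are matroids on a common ground set $E'$ with $\mathrm{cov}(M')\le\mathrm{cov}(N')$, then $\mathcal I(M')\cap\mathcal I(N')$ has homological connectivity at least $|E'|/\max\{\mathrm{cov}(M')+1,\mathrm{cov}(N')\}$.

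Establishing this last connectivity bound is where I expect the real difficulty to lie, and it is the main obstacle. Lower bounds of this flavour are available for the intersection of a single matroid with a general simplicial complex (Aharoni--Berger), and for one matroid $\mathcal I(M')$ alone such a bound is an easy consequence of shellability; but squeezing out the exact denominator --- in particular the sharp $+1$, which the $K_4$ instance of Figure~\ref{drftg} shows cannot be removed --- seems to require a finer, Morse-theoretic or nerve-type, analysis of $\mathcal I(M')\cap\mathcal I(N')$ together with a careful induction on $|E'|$ or on $\mathrm{cov}(M')$. As a complementary line I would first try to dispatch the special cases most relevant here, Conjectures~\ref{optim} and~\ref{optim2}, where $N$ is a truncated unitary partition matroid: there one can start from a factorization of $M$ into $t$ bases (matroid union) and repair partition-class violations one at a time by alternating-walk exchanges, using the slack $t\ge k+1$ to force progress. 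One could also first prove the fractional version --- the matroid intersection polytope is integral, so bounding the fractional cover number by common independent sets is routine --- and then attack the integrality gap; but this is precisely where the problem becomes subtle, since, as the hardness results of this paper (Theorems~\ref{hard1} and~\ref{hard2}) reflect, there is no min--max theorem for covering by common independent sets, so a purely polyhedral argument cannot by itself close the gap and must be fused with one of the structural ideas above.
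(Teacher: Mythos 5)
This statement is not a result of the paper at all: Conjecture~\ref{ahconj} is stated as an \emph{open} conjecture of Aharoni and Berger, recorded because it would imply Conjectures~\ref{optim} and~\ref{optim2}; the paper contains no proof of it, so there is nothing to compare your argument with on the paper's side. What matters is therefore whether your proposal is itself a proof, and it is not. The topological reduction you set up (encode a partition into $t=\max\{k+1,\ell\}$ common independent sets as an independent system of representatives for the fibres $V_e$ in the $t$-fold join of copies of $\mathcal I(M)\cap\mathcal I(N)$, invoke the Aharoni--Berger topological Hall theorem, and use superadditivity of connectivity under joins) is sound as far as it goes, but it only trades the conjecture for the statement you isolate at the end: that $\mathcal I(M')\cap\mathcal I(N')$ has homological connectivity at least $|E'|/\max\{\operatorname{cov}(M')+1,\operatorname{cov}(N')\}$. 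That bound is not a known theorem; it is itself an open strengthening of Conjecture~\ref{ahconj} (via exactly the join argument you describe it would immediately yield the conjecture, and in partition rather than cover form). The connectivity bounds actually available --- Aharoni--Berger for a matroid against a general simplicial complex, and the two-matroid bounds underlying the Aharoni--Berger--Ziv covering result \cite{ABZ} --- lose roughly a factor of $2$ and only give coverings by about $2\max\{k,\ell\}$ common independent sets, which is far from the sharp $\max\{k+1,\ell\}$. You acknowledge this step as ``the main obstacle,'' and that acknowledgement is accurate: without it the argument proves nothing new.

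The fallback lines you sketch do not close this gap either. The fractional/polyhedral route only bounds the fractional cover number, and, as you yourself note, the integrality gap is precisely the content of the conjecture; the hardness results of this paper (Theorems~\ref{hard1} and~\ref{hard2}) indeed indicate there is no exact min--max theorem to lean on. The alternating-exchange idea for the special case where $N$ is a (truncated) unitary partition matroid is only a heuristic: no invariant is given that guarantees the exchanges terminate or make progress, and this special case is exactly Conjectures~\ref{optim} and~\ref{optim2}, which the paper leaves open and only approximates from the covering side (Theorems~\ref{k41}--\ref{doublecover}). In short, your submission is a reasonable research plan that correctly identifies the standard topological attack and its sticking point, but every step that would constitute actual progress --- the sharp connectivity lower bound, the termination of the exchange procedure, or the integrality-gap analysis --- is missing, so the conjecture remains unproved by your proposal.
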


The second contribution of this article is a collection of results making some progress toward Conjecture \ref{optim2}. Observe that factorization problems are the intersection of two different classes of problems, namely packing and covering problems. When we are given a $k$-base matroid and a partition of its ground set, the questions whether this matroid can be factorized into $k$ rainbow bases, whether it contains a packing of $k$ rainbow bases and whether it can be covered by $k$ rainbow bases are all equivalent. Hence Theorems \ref{hard1} and \ref{hard2} imply hardness results for the packing and covering versions of these problems. Interestingly, most attempts to approach problems of factorization of matroids  into common bases have so far used the packing perspective. For example, most of the aforementioned partial results for the conjecture of Brualdi and Hollingsworth are packing results. Horn and Nelsen \cite{hn} approach a problem which is closely related to Conjectures \ref{optim} and \ref{optim2} through a packing result relying on algebraic methods.

We take a different approach, namely we give some approximate versions for Conjecture \ref{optim2} from the covering perspective. We give the following result for $k \geq 4$. We say that a partition is {\it $p$-bounded} if each of its partition classes contains at most $p$ elements.

\begin{Theorem}\label{k41}
Let $M$ be a $k$-base matroid where $k\geq 4$ and $k=2 \alpha+ \beta$ for some nonnegative integers $\alpha$ and $\beta$ and $\mathcal{P}$ a 2-bounded partition of $E(M)$. Then $E(M)$ can be covered by $5 \alpha+4\beta$ bases of $M$ which are rainbow  with respect to $\mathcal{P}$.
\end{Theorem}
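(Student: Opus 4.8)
The plan is to decompose the problem into pieces that are already understood: a pair of bases together can be covered by a small number of rainbow bases, and a single base (if $\beta = 1$) can too. Since $M$ is a $k$-base matroid with $k = 2\alpha + \beta$, fix a factorization $E(M) = B_1 \cup \dots \cup B_k$ into $k$ bases, and group the factors into $\alpha$ pairs $\{B_{2i-1}, B_{2i}\}$ together with $\beta$ leftover single bases. For each pair, the union $B_{2i-1} \cup B_{2i}$ spans a restriction of $M$ that is a $2$-base matroid (a block matroid), and the partition $\mathcal{P}$ restricted to these elements is still $2$-bounded; I would show such a set can be covered by $5$ rainbow bases of $M$. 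For each leftover single base $B_j$, which is itself rainbow-coverable by $4$ rainbow bases after accounting for the $2$-boundedness, this contributes $4\beta$. Summing gives $5\alpha + 4\beta$ rainbow bases covering all of $E(M)$, which is exactly the claimed bound.

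The core of the argument is therefore the $2$-base case: if $N$ is a block matroid on ground set $S$ with a $2$-bounded partition $\mathcal{Q}$, then $S$ can be covered by $5$ sets each independent in $N$ (hence extendable to a basis of $M$) and rainbow with respect to $\mathcal{Q}$. Here I would use the standard exchange structure of block matroids: $N$ has two disjoint bases $B, B'$ with $B \cup B' = S$. The obstruction to $B$ itself being rainbow is pairs of $\mathcal{Q}$ entirely inside $B$; by symmetry the same for $B'$. The idea is to repair both bases simultaneously by swapping one element of each bad pair across the $B/B'$ divide using matroid exchange, being careful that a single swap does not create a new conflict — this is where a careful accounting, or an auxiliary (bipartite) conflict graph whose chromatic structure controls the number of rainbow pieces, comes in. One clean way: build the graph $H$ on the pairs of $\mathcal{Q}$, color the "halves" of $B$ and $B'$ so that each rainbow class contains at most one element of each pair, and invoke a bounded-degree or bipartite coloring bound to get a constant number of classes; the constant $5$ should fall out of $2$ classes per base plus a bounded number of extra classes to absorb cross-conflicts.

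The single-base case ($\beta = 1$) is easier: a basis $B$ of $M$ with a $2$-bounded partition $\mathcal{Q}$ has its conflict graph on $B$ with maximum degree $1$ (each element is in at most one $\mathcal{Q}$-class, and that class meets $B$ in at most one other element), so $B$ itself splits into $2$ rainbow independent sets — but to get bases of $M$ we extend, and to cover the elements that were removed we need a couple more pieces; $4$ is a safe bound. I should double-check the arithmetic: $\beta$ can be $0$ or $1$ (since $k = 2\alpha + \beta$ with the parity), but the statement allows any nonnegative $\beta$, so the decomposition into $\alpha$ pairs and $\beta$ singletons is flexible — I would just take $\alpha = \lfloor k/2 \rfloor$, $\beta = k \bmod 2$, or any other valid split, and the bound $5\alpha + 4\beta$ is what the pairing/singleton accounting yields.

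The main obstacle I anticipate is the simultaneous repair in the $2$-base case: a naive swap to fix a bad pair in $B$ may destroy a pair that was fine in $B'$, so the swaps must be coordinated, and bounding the number of rainbow pieces needed to clean up all residual conflicts by the single constant $5$ (rather than something growing with the number of bad pairs) is the delicate point. I expect this to be handled by a potential-function or greedy-coloring argument on the conflict graph, exploiting that every element lies in at most one partition class of size $2$, which keeps all relevant auxiliary graphs of very low degree.
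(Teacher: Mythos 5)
Your high-level accounting coincides with the paper's: pair up the $k$ bases into $\alpha$ pairs and $\beta$ singletons, spend $5$ rainbow bases per pair and $4$ per singleton. But the two steps that carry all the technical weight are missing, and one of them is treated as if it were automatic when it is in fact the crux. First, you write that a rainbow independent set of $N=M|(B_{2i-1}\cup B_{2i})$ is ``hence extendable to a basis of $M$'' -- extendability as an independent set is trivial, but the extension must itself be rainbow, and nothing in your sketch controls the partition classes met by the completion (the completing elements may pair with each other or with elements already in the set). This is exactly what the paper's Lemma~\ref{foresttree} provides: a rainbow independent set $X$ can be covered by \emph{two} rainbow bases of $M$, proved by splitting the partners of $X$ between $C_1\cup C_2$ and $C_3\cup C_4$ for four disjoint bases $C_1,\dots,C_4$ of $M$ and then finding a rainbow basis of each contracted block matroid. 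This is the only place the hypothesis $k\geq 4$ is used; your proposal never invokes $k\geq 4$ at all, and indeed the same sketch would claim a bound for $k=3$, where the paper needs a genuinely weaker lemma (three bases per independent set, Theorem~\ref{k412}). Without an argument of this kind your per-pair and per-singleton constants ($5$ and $4$) are unsupported.

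Second, your treatment of the block-matroid case itself is a sketch rather than a proof: the ``simultaneous repair by exchanges'' plus a conflict-graph coloring is precisely the coordination problem you acknowledge as unresolved, and it is not clear it terminates with a constant independent of the number of bad pairs. The paper instead derives the pair bound from a clean existence statement (Proposition~\ref{dfqdqw}: a $2$-base matroid with a $2$-uniform partition has a rainbow basis, provable by matroid intersection or by the inductive Lemma~\ref{main}), which yields a cover of each $B_{2i-1}\cup B_{2i}$ by one rainbow basis plus two rainbow independent sets (Lemma~\ref{fzu}); the constant $5=1+2\cdot 2$ then comes from converting those two independent sets via Lemma~\ref{foresttree}, and $4=2\cdot 2$ for a singleton base similarly. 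So the gap is twofold: the rainbow-basis existence result for block matroids is asserted but not proved, and the conversion of rainbow independent sets into rainbow bases of the whole matroid -- the step that explains both the constants and the hypothesis $k\geq 4$ -- is absent.
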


In order to extend our proof for $k=3$, we need to slighly alter our proof technique. This leads to the following somewhat worse bound.

\begin{Theorem}\label{k412}
Let $M$ be a $3$-base matroid and $\mathcal{P}$ a 2-bounded partition of $E(M)$. Then $E(M)$ can be covered by 13 bases of $M$ which are rainbow  with respect to $\mathcal{P}$.
\end{Theorem}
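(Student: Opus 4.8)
The plan is, in the spirit of Theorem \ref{k41}, to reduce the problem to covering a \emph{rainbow} set by rainbow bases, exploiting that $M$ is $3$-base. Fix a decomposition $E(M)=B_1\sqcup B_2\sqcup B_3$ into bases of $M$, call a class of $\mathcal P$ of size $2$ a \emph{pair}, and \emph{orient} each pair by designating one of its two elements as first and the other as second. This splits $E(M)=E_0\sqcup E_1$, where $E_0$ consists of all first elements together with all singleton classes and $E_1$ of all second elements. Both $E_0$ and $E_1$ are rainbow with respect to $\mathcal P$, so \emph{every} subset of $E_0$ and every subset of $E_1$ is rainbow. Hence it suffices to bound, for a rainbow set $S\subseteq E(M)$, the number of rainbow bases of $M$ needed to cover $S$, and then add the two bounds obtained for $S=E_0$ and $S=E_1$ (and, in the recursive rounds below, for smaller rainbow remainders).

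The easy case is $S$ spanning. Then $M|_S$ has rank $r(M)$; since $M$ is a $3$-base matroid we have $|A|\le 3\,r_M(A)$ for every $A\subseteq E(M)$, so by the matroid covering theorem $M|_S$ is a union of three sets $I_1,I_2,I_3$ independent in $M$. Extending each $I_j$ inside $M|_S$ to a basis $R_j$ of $M|_S$ — which is a basis of $M$ because $S$ spans — yields three rainbow bases of $M$ with $S=I_1\cup I_2\cup I_3\subseteq R_1\cup R_2\cup R_3$. Thus if the orientation could always be chosen so that $E_0$ \emph{and} $E_1$ were both spanning, six rainbow bases would suffice. But this is exactly what fails on tight instances such as the $K_4$ of Figure \ref{drftg}: however its three pairs are oriented, one of $E_0,E_1$ is a spanning star and the other is a non-spanning triangle. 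So in general we only arrange (choosing the orientation to make $r_M(E_0)$ maximal) that $E_0$ spans, handled by three rainbow bases, while $E_1$ — and then successively smaller rainbow sets — may have positive rank deficiency.

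To cover a non-spanning rainbow set $S$ one must extend its independent subsets to rainbow bases of $M$ using elements outside $S$, and here is the step that genuinely consumes extra bases in the proof of Theorem \ref{k41}: an independent rainbow set $I\subseteq S$ can be completed to a rainbow basis only using elements whose $\mathcal P$-classes are not met by $I$, i.e.\ avoiding the ``partners'' (in $E_0$) of the elements of $I$; completing within $E(M)$ minus those partners needs that set to be spanning, which with enough bases one guarantees by exchanging against them. With only $B_1,B_2,B_3$ this need not hold after one step, so instead one iterates: cover as much of $S$ as possible by rainbow bases, pass the uncovered remainder — still rainbow, of strictly smaller rank — to the next round, and bound the number of rounds. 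Carrying out this iteration carefully, keeping the coindependence of the deleted partners under control at each stage (so that spanning is preserved), and checking that everything fits into $13$ rainbow bases in total rather than the cleaner $5\alpha+4\beta$ available once $k\ge 4$ provides real slack, is the main obstacle and the reason for the weaker constant.
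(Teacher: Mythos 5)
Your proposal handles only the easy half of the problem and explicitly defers the part that actually carries the theorem. The split $E(M)=E_0\sqcup E_1$ and the observation that a \emph{spanning} rainbow set $S$ can be covered by three rainbow bases (cover $M|S$ by three independent sets and complete inside $S$) are fine, and an orientation making $E_0$ spanning can indeed be arranged, since it amounts to the existence of a rainbow basis (cf.\ Proposition \ref{dfqdqw}). But for $E_1$, which in general is not spanning, you only say that one should ``cover as much as possible, pass the uncovered remainder to the next round, and bound the number of rounds'', and you concede that carrying this out within $13$ bases ``is the main obstacle''. No bound on the number of rounds is given, and the invariant you invoke (the rank of the remainder strictly decreases) would at best yield a bound growing with $r(M)$, not a constant. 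Moreover, the delicate point you gloss over --- completing an independent rainbow set $I\subseteq E_1$ to a basis of $M$ that is itself rainbow, i.e.\ one that avoids the partners of $I$ and does not pick up both elements of any other pair --- is exactly the content the proof needs; nothing in your sketch guarantees that after deleting the partners of $I$ the remaining matroid still admits a rainbow basis extending $I$, and the $3$-base structure can be destroyed by such a deletion. So there is a genuine gap: the theorem is not proved.

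For comparison, the paper closes precisely this gap without any iteration. It first covers $E(M)$ by five rainbow sets independent in $M$: Lemma \ref{fzu} applied to the $2$-base restriction $M|(B_1\cup B_2)$ gives three rainbow independent sets, one of which is a basis of $M$, and $B_3$ splits trivially into two rainbow independent sets. The key step is then Lemma \ref{foresttree2}: if $M$ contains three disjoint bases, every rainbow independent set $X$ can be covered by three rainbow bases; its proof splits $X$ into pieces whose partner sets avoid two of the disjoint bases, contracts each piece inside the restriction to that piece plus the two avoided bases, and applies Proposition \ref{dfqdqw} to the resulting $2$-base matroid. This accounting gives $1+4\cdot 3=13$. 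Your argument would need a replacement for Lemma \ref{foresttree2} --- a constant bound for covering a rainbow (independent) set by rainbow bases of $M$ --- and as written it does not contain one.
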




In the light of these results, the question whether they could be extended to $k=2$ becomes one of independent interest. 
Unfortunately, our techniques do not yield a similar result for this case. However, as a first step toward such a theorem, we are able to show the following result.

\begin{Theorem}\label{doublecover}
Let $M$ be a $2$-base matroid and $\mathcal{P}$ a 2-bounded partition of $E(M)$. Then $E(M)$ can be covered by $O(\log(|E(M)|))$ bases of $M$ which are rainbow  with respect to $\mathcal{P}$.
\end{Theorem}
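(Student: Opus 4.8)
The plan is to produce a cover of size $O(\log|E(M)|)$ by combining one polytopal observation with a greedy halving argument. Write $r=r(M)$ and $n=|E(M)|$. Since $M$ is a $2$-base matroid, $E(M)$ partitions into two bases $B_1,B_2$, so $n=2r$. Let $N$ be the partition matroid on $E(M)$ whose independent sets are the subsets meeting every class of $\mathcal{P}$ in at most one element; thus the rainbow bases of $M$ with respect to $\mathcal{P}$ are exactly the common independent sets of $M$ and $N$ of size $r$. The heart of the argument is the claim that the uniform half-vector $\tfrac12\mathbf{1}_{E(M)}$ lies in the convex hull of the incidence vectors of the rainbow bases of $M$. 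By Edmonds' matroid intersection polytope theorem, the convex hull of the incidence vectors of the common independent sets of $M$ and $N$ equals the intersection of the independent set polytopes of $M$ and $N$, so it suffices to check the rank inequalities. On the $M$-side, any $S\subseteq E(M)$ satisfies $r_M(S)\ge\max\{|B_1\cap S|,|B_2\cap S|\}\ge\tfrac12|S|$, which is exactly the constraint $\tfrac12\mathbf{1}_{E(M)}(S)\le r_M(S)$. On the $N$-side, $2$-boundedness of $\mathcal{P}$ gives $r_N(S)=|\{X\in\mathcal{P}:X\cap S\neq\emptyset\}|\ge\tfrac12|S|$. Finally $\tfrac12\mathbf{1}_{E(M)}(E(M))=\tfrac n2=r$, so the point lies on the face of this polytope cut out by the valid inequality $x(E(M))\le r_M(E(M))=r$, whose vertices are precisely the incidence vectors of the common independent sets of size $r$, i.e. of the rainbow bases. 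Hence $\tfrac12\mathbf{1}_{E(M)}=\sum_i\lambda_i\mathbf{1}_{R_i}$ for some rainbow bases $R_i$ and weights $\lambda_i\ge 0$ summing to $1$.

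The cover now follows by halving. For any nonempty $F\subseteq E(M)$ one has $\sum_i\lambda_i|R_i\cap F|=\sum_{e\in F}\sum_{i:\,e\in R_i}\lambda_i=\tfrac12|F|$, so some rainbow basis $R_i$ meets $F$ in at least $|F|/2$ elements. Starting from $F_0=E(M)$, repeatedly choose a rainbow basis $C_j$ with $|C_j\cap F_{j-1}|\ge\tfrac12|F_{j-1}|$ and put $F_j=F_{j-1}\setminus C_j$; then $|F_j|\le n/2^{j}$, so after $\lfloor\log_2 n\rfloor+1$ steps the uncovered set $F_j$ is empty and $C_1,\dots,C_{\lfloor\log_2 n\rfloor+1}$ cover $E(M)$. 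Equivalently, $2\lambda$ is a fractional cover of $E(M)$ by rainbow bases of total value $2$, and greedy set cover converts it into an integral cover of size $O(\log n)$; or one samples $O(\log n)$ rainbow bases according to $\lambda$, each element being missed with probability $\tfrac12$ per sample, and applies a union bound.

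I expect essentially all the content to sit in the first paragraph, and in particular in the observation that it is precisely the two hypotheses, used symmetrically, that make $\tfrac12\mathbf{1}_{E(M)}$ admissible: the $2$-base property of $M$ is what gives $r_M(S)\ge\tfrac12|S|$, the $2$-boundedness of $\mathcal{P}$ is what gives $r_N(S)\ge\tfrac12|S|$, and the equality $n=2r$ is exactly what places the half-vector on the base face rather than strictly inside the polytope. The logarithmic loss is built into the halving step: turning the $O(1)$-value fractional cover obtained this way into an integral one costs a $\Theta(\log n)$ factor, and reaching the constant bound of Conjecture \ref{optim2} would require a genuinely different approach, which is presumably why the statement is phrased with an $O(\log|E(M)|)$ bound.
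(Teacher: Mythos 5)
Your proof is correct, but it takes a genuinely different route from the paper. The paper's engine is Lemma \ref{main}, proved by a self-contained combinatorial induction (on the number of elements and of nontrivial tight sets, using the contraction/deletion facts in Propositions \ref{kontrahierbaum} and \ref{delcon} and a cyclic sequence argument across the direct summands) that produces, for any $Z$, a single rainbow basis meeting $Z$ in at least $|Z|/2$ elements; the theorem then follows by exactly the halving recursion you describe. You obtain the same half-covering property instead from Edmonds' matroid intersection polytope theorem: the inequalities $r_M(S)\geq \tfrac12|S|$ (from the two disjoint bases) and $r_N(S)\geq\tfrac12|S|$ (from $2$-boundedness) place $\tfrac12\mathbf{1}$ in the common independence polytope, and since $\tfrac12\mathbf{1}(E(M))=r_M(E(M))$ every common independent set in the support of a convex representation has full size (your face argument, or simply an averaging of sizes), so $\tfrac12\mathbf{1}$ is a convex combination of rainbow bases and some rainbow basis meets any prescribed $F$ in at least $|F|/2$ elements. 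Your argument is shorter and in fact stronger in one respect: it exhibits a fractional cover (indeed a fractional factorization of weight $2$) of $E(M)$ by rainbow bases, which also reproves Proposition \ref{dfqdqw} and makes transparent that the only loss on the way to an integral cover is the $\Theta(\log n)$ rounding factor. What the paper's route buys is self-containedness (it does not invoke the intersection polytope as a black box) and a lemma formulated for direct sums of $2$-base matroids with an explicit structural induction, which the authors reuse to keep the whole covering section elementary. Both approaches coincide in the final greedy halving step giving $\log_2|E(M)|+O(1)$ rainbow bases.
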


The rest of this article is structured as follows. In Section \ref{preldef}, we give some more formal definitions of the objects in consideration and list some preliminary results. In Section \ref{complex}, we give the proofs of Theorems \ref{hard1} and \ref{hard2}. Next, in Section \ref{cover}, we prove Theorems \ref{k41} to \ref{doublecover}. Finally, in Section \ref{conc}, we conclude with a discussion of our results.

\section{Definitions and preliminaries}\label{preldef}
In this section, we give all the notation and preliminary results we need for our main proofs. After giving some basic notation in Section \ref{nota}, we give formal descriptions of all the algorithmic problems considered in Section \ref{algo}. In Section \ref{bas}, we give some simple properties of graphs. In Section \ref{weed}, we describe a new class of graphs we need for one of our reductions in Section \ref{complex}.
\subsection{Notation}\label{nota}
When we speak of a collection of {\it disjoint} objects, we mean that they are pairwise disjoint.
Graphs are considered to be undirected and graphs and digraphs may have parallel edges and arcs, respectively. Let $G$ be a graph. We denote the vertex set of $G$ by $V(G)$ and the edge set of $G$ by $E(G)$. For some disjoint $X,Y \subseteq V(G)$, we denote by $\delta_G(X,Y)$ the set of edges in $E(G)$ which have one endvertex in $X$ and one endvertex in $Y$. We abbreviate $\delta_G(X,V(G)-X)$ to $\delta_G(X)$. Further, we use $d_G(X,Y)$ for $|\delta_G(X,Y)|$ and $d_G(X)$ for $|\delta_G(X)|$. 

Let $D$ be a digraph. We denote the vertex set of $D$ by $V(D)$ and the arc set of $D$ by $A(D)$. For some disjoint $X,Y \subseteq V(D)$, we denote by $\delta_D(X,Y)$ the set of arcs in $A(D)$ whose tail is in $X$ and whose head is in $Y$ and we abbreviate $\delta_D(V(D)-X,X)$ to $\delta_D^-(X)$ . Further, we use $d_D(X,Y)$ for $|\delta_D(X,Y)|$ and $d_D^-(X)$ for $|\delta_D^-(X)|$. For a single vertex $v$, we often use $v$ instead of $\{v\}$.

 The graph that is obtained from a digraph $D$ by replacing every $a \in A(D)$ by an edge containing the same two vertices as $a$ is called the {\it underlying graph} $UG(D)$ of $D$. Given a graph $G$, an {\it orientation} of some $e \in E(H)$ is an arc containing the same two vertices as $e$. A digraph that is obtained from $G$ by replacing every $e \in E(G)$ by an orientation of itself is called an {\it orientation} of $G$.

Given a graph $H$, a {\it subgraph} of $H$ is a graph $H'$ with $V(H')\subseteq V(H)$ and $E(H')\subseteq E(H)$. Further, we say that $H'$ is {\it spanning} if $V(H')=V(H)$. A {\it factorization} of $H$ is a collection of spanning  subgraphs $(U_1,\ldots,U_k)$ of $H$ such that $\bigcup_{i=1}^kE(U_i)=E(H)$ and for all $i,j \in \{1,\ldots,k\}$ with $i \neq j$, we have $E(U_i)\cap E(U_j)= \emptyset$. We use similar definitions for digraphs.

A graph $T$ is called a {\it tree} if $d_T(X)\geq 1$ for all nonempty $X\subsetneq V(T)$ and $T$ is minimal with that property with respect to edge-deletion. Given a graph $G$, a spanning subgraph $T$ of $G$ is called a {\it spanning tree} of $G$ if $T$ is a tree. A graph that admits a factorization into $k$ spanning trees is called a {\it $k$-multiple tree}. We abbreviate a 2-multiple tree to a {\it double tree}. An orientation $\vec{T}$ of a tree $T$ in which $d_{\vec{T}}-v$ holds for all $v \in V(T)-r$ for some $r \in V(T)$ is called an {\it $r$-arborescence}.

For basics on matroid theory, see \cite{book}, Chapter 5. Given a matroid $M$, we use $r_M$ to denote the rank function of $M$ and refer to the elements of $M$ by $E(M)$. If there is a collection $(B_1,\ldots,B_k)$ of bases of $M$ such that $\bigcup_{i=1}^k B_i=E(M)$ and $B_i\cap B_j= \emptyset$ for all $i,j \in\{1,\ldots,k\}$ with $i \neq j$, we say that $M$ is a {\it $k$-base matroid} and $(B_1,\ldots,B_k)$ is a {\it factorization} of $M$ into bases. A {\it partition matroid} is a matroid that is the direct sum of uniform matroids. Moreover, if each of these uniform matroids is of rank 1, it is a {\it unitary partition matroid}. If a matroid is isomorphic to the cycle matroid of a graph, we call it {\it graphic}.

Given a partition $\mathcal{P}$ of a ground set $E$ and an integer $p$, we say that $\mathcal{P}$ is {\it $p$-bounded} if $|X|\leq p$ for all $X \in \mathcal{P}$, and that $\mathcal{P}$ is {\it $p$-uniform} if $|X|= p$ for all $X \in \mathcal{P}$. Further, we say that a subset $S$ of $E$ is {\it rainbow} with respect to $\mathcal{P}$ if $|S \cap X|\leq 1$ for all $X \in \mathcal{P}$. We do not specify the partition if it is clear from the context.

\subsection{Algorithmic problems}\label{algo}
We here give the formal descriptions of the algorithmic problems considered in this article. The first two problems are those considered in Theorems \ref{hard1} and \ref{hard2}.

\medskip

\noindent \textbf{Rainbow Spanning Tree $k$-Factorization (RST$k$F):}
\smallskip

\noindent\textbf{Input:} A $k$-multiple tree $G$, a $k$-uniform partition $\mathcal{P}$ of $E(G)$.
\smallskip

\noindent\textbf{Question:} Can $G$ be factorized into $k$ rainbow spanning trees?
\medskip

\medskip

\noindent \textbf{Bounded Spanning Tree $k$-Factorization (BST$k$F):}
\smallskip

\noindent\textbf{Input:} A digraph $D$, a function $g:V(D) \rightarrow \mathbb{Z}_{\geq 0}$.
\smallskip

\noindent\textbf{Question:} Can $D$ be factorized into $k$ arc-disjoint subgraphs $(T_1,\ldots,T_k)$ such that for all $i \in \{1,\ldots,k\}$, $UG(T_i)$ is a spanning tree of $UG(D)$ and $d^-_{A(T_i)}(v)\leq g(v)$?
\medskip

The last two problems are useful for the reduction in the proof of Theorem \ref{hard1}.
\medskip

\noindent \textbf{Monotonic NAE3SAT (MNAE3SAT):}
\smallskip

\noindent\textbf{Input:} A set of variables $X$, a set of clauses $\mathcal{C}$ each containing 3 nonnegated variables of $X$.
\smallskip

\noindent\textbf{Question:} Is there a truth assignment $\phi:X \rightarrow \{true,false\}$ such that each clause contains at least one true and at least one false literal?
\bigskip

\noindent \textbf{$k$-Colorability ($k$COL):}
\smallskip

\noindent\textbf{Input:} A graph $G$.
\smallskip

\noindent\textbf{Question:} Is there a proper vertex-coloring of $G$ using at most $k$ colors?
\medskip

The following results can be found in \cite{schaefer} and \cite{gj}, respectively.

\begin{Proposition}\label{sathard}
MNAE3SAT is NP-hard.
\end{Proposition}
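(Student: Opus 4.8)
The plan is to identify MNAE3SAT with the Boolean constraint satisfaction problem $\mathrm{SAT}(\{R\})$ over the single ternary relation $R=\{0,1\}^3\setminus\{(0,0,0),(1,1,1)\}$ (``not all equal''), and to invoke Schaefer's dichotomy theorem \cite{schaefer}. That theorem says $\mathrm{SAT}(\Gamma)$ is polynomial-time solvable if every relation in $\Gamma$ is $0$-valid, or every one is $1$-valid, or every one is bijunctive (preserved by ternary majority), or every one is Horn (preserved by binary $\min$), or every one is dual-Horn (preserved by binary $\max$), or every one is affine (a coset of an $\mathbb{F}_2$-subspace); otherwise $\mathrm{SAT}(\Gamma)$ is NP-complete. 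So the whole proof reduces to checking, for the single relation $R$, that each of these six tractability conditions fails.

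These checks I would do by exhibiting tiny witnesses: $(0,0,0)\notin R$ rules out $0$-validity and $(1,1,1)\notin R$ rules out $1$-validity; the triple $(0,0,1),(0,1,0),(1,0,0)\in R$ has coordinatewise majority $(0,0,0)\notin R$, so $R$ is not bijunctive; $(1,1,0),(0,0,1)\in R$ have $\min=(0,0,0)\notin R$ and $\max=(1,1,1)\notin R$, ruling out Horn and dual-Horn; and $(0,0,1)\oplus(0,1,0)\oplus(1,0,0)=(1,1,1)\notin R$ (and $|R|=6$ is not a power of two), ruling out affineness. Hence no tractable case applies, so $\mathrm{SAT}(\{R\})=\mathrm{MNAE3SAT}$ is NP-complete, and in particular NP-hard; membership in NP is obvious (a satisfying assignment is a polynomial-size certificate).

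If one prefers a self-contained argument not relying on Schaefer's theorem, I would instead chain reductions from $3$-SAT: (i) replace each clause $(\ell_1\vee\ell_2\vee\ell_3)$ by $\mathrm{NAE}(\ell_1,\ell_2,\ell_3,s)$ for one global fresh variable $s$ (correct because $\mathrm{NAE}$ is invariant under complementing the whole assignment, so one may assume $s=\mathit{false}$, whereupon the $4$-clause forces some $\ell_i$ to be true); (ii) split each $\mathrm{NAE}(a,b,c,d)$ into $\mathrm{NAE}(a,b,z)\wedge\mathrm{NAE}(\bar z,c,d)$ with a fresh $z$, which an easy four-case check shows to be equivalent; and (iii) monotonize by introducing $x^+,x^-$ for each variable $x$, rewriting positive occurrences with $x^+$ and negative ones with $x^-$, and adding, for each $x$, a monotone gadget forcing $x^+\neq x^-$. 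Step (iii) is the one place where care is needed, precisely because the complement-invariance of $\mathrm{NAE}$ prevents one from deciding which of $x^+,x^-$ represents $x$: it is enough to force $x^+\neq x^-$, which is achieved by the clause $\mathrm{NAE}(x^+,x^+,x^-)$ when a variable may repeat in a clause, and otherwise by reducing from $2$-colourability of $3$-uniform hypergraphs (or by a marginally larger gadget). I expect this monotonization bookkeeping to be the main obstacle in a fully detailed direct proof, whereas the Schaefer route collapses the entire argument to the six one-line verifications above.
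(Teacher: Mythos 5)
Your primary route is exactly what the paper does: Proposition \ref{sathard} is given without proof and is attributed to \cite{schaefer}, i.e.\ it is precisely the application of Schaefer's dichotomy to the single ternary not-all-equal relation, and your six verifications (failure of $0$-validity, $1$-validity, majority, $\min$, $\max$, and affine closure) are all correct. The only residual subtlety --- whether clauses must consist of three \emph{distinct} variables, which Schaefer's CSP formulation does not require --- is one you already flag and handle correctly in your alternative reduction, so the proposal matches the paper's approach.
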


\begin{Proposition}\label{colhard}
$k$COL is NP-hard for any $k \geq 3$.
\end{Proposition}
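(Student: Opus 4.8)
The plan is to prove NP-hardness of $k$COL by first settling the base case $k=3$ by a polynomial reduction from satisfiability, and then lifting it to every $k>3$ by a one-line padding argument. Membership in NP is clear --- a claimed proper $k$-coloring is a polynomial-size certificate verifiable in polynomial time --- so the reductions below in fact give NP-completeness.

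For $k=3$ I would reduce from 3SAT. Given a 3CNF formula $\varphi$ with variables $x_1,\dots,x_n$ and clauses $C_1,\dots,C_m$, build a graph $G_\varphi$ as follows. Start with a \emph{palette} triangle on three vertices $v_T,v_F,v_B$; in any proper $3$-coloring these receive three distinct colors, which we name $T$, $F$, $B$ accordingly. For each variable $x_i$ add vertices $v_i,\bar v_i$, make them adjacent to each other and both adjacent to $v_B$; this forces $\{v_i,\bar v_i\}$ to be colored $\{T,F\}$ in one of the two orders, and we read the color of $v_i$ as the truth value of $x_i$. The core is the \emph{OR-gadget} $\mathrm{OR}(p,q;o)$ on inputs $p,q$ and output $o$: a triangle on $\{a,b,o\}$ together with the two extra edges $pa$ and $qb$. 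A short case analysis shows that in any proper $3$-coloring of this gadget, if $p$ and $q$ are both colored $F$ then $o$ is forced to be colored $F$, whereas if at least one of $p,q$ is colored $T$ then the gadget admits a proper coloring with $o$ colored $T$. For each clause $C_j$ on literals $\ell_1,\ell_2,\ell_3$ (vertices among the $v_i,\bar v_i$) attach $\mathrm{OR}(\ell_1,\ell_2;s_j)$ and then $\mathrm{OR}(s_j,\ell_3;t_j)$ with fresh internal vertices, and finally join $t_j$ to both $v_F$ and $v_B$, which forces $t_j$ to be colored $T$. Chaining the two gadget properties shows that the subgraph attached to $C_j$ can be properly colored, given the palette coloring and the colors of $\ell_1,\ell_2,\ell_3$, if and only if at least one literal of $C_j$ is colored $T$. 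Since distinct clause gadgets meet only in the palette and the literal vertices, $G_\varphi$ is properly $3$-colorable if and only if $\varphi$ is satisfiable; as $G_\varphi$ has $O(n+m)$ vertices, this is a polynomial reduction.

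For $k>3$, reduce from $3$COL: given a graph $G$, let $G'$ arise from $G$ by adding $k-3$ new vertices that are pairwise adjacent and each adjacent to every vertex of $G$. In any proper $k$-coloring of $G'$ these $k-3$ vertices use $k-3$ pairwise distinct colors, all forbidden on every vertex of $G$, so $G$ is colored with the remaining $3$ colors; conversely any proper $3$-coloring of $G$ together with $k-3$ fresh colors on the new vertices is a proper $k$-coloring of $G'$. Hence $G'$ is $k$-colorable iff $G$ is $3$-colorable, and the reduction is trivially polynomial, so NP-hardness of $3$COL propagates to $k$COL for every $k\geq 3$.

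There is no deep obstacle here: this is a classical fact and the argument is the textbook gadget reduction. The only point needing genuine care is the verification of the OR-gadget --- checking the cases where $p,q$ are colored $FF$, $FT$, $TF$, $TT$, and also the cases in the chained gadget where the intermediate input $s_j$ is colored $B$ --- together with confirming that no gadget coloring is ever forced to impose an unintended constraint back on the palette, and that the gadgets of two different clauses interact only through the already-constrained literal vertices and the palette. The base hardness of 3SAT from which the first reduction starts is the Cook--Levin theorem, which I take as given; alternatively one could start from MNAE3SAT via Proposition~\ref{sathard}, at the cost of a slightly different clause gadget.
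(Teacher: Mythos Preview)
Your proposal is correct; it is the standard textbook reduction (palette triangle, variable gadgets, chained OR-gadgets for $3$COL, then the $K_{k-3}$-join padding for $k>3$), and your case analysis of the OR-gadget is sound. The paper itself gives no proof of this proposition at all --- it simply cites \cite{gj} --- so you have in effect written out the argument that the cited reference contains.
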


\subsection{Basic graph properties}\label{bas}
We here give a collection of basic graph properties which will be useful for the reductions in Section \ref{complex}. Due to their simplicity, we omit their proofs.
\begin{Proposition}\label{ident}

Let $G_1,\ldots,G_t$ be a set of graphs whose vertex sets are disjoint and for every $i=1,\ldots,t$, let $T_i$ be a subgraph of $G_i$ and $v_i \in V(T_i)$. Further, let $G$ be obtained from the union of the graphs $G_i$ by identifying the vertices $v_i$ for $i=1,\ldots,t$ and let $T$ be obtained from the union of the graphs $T_i$ by identifying the vertices $v_i$ for $i=1,\ldots,t$. Then $T$ is a spanning tree of $G$ if and only if $T_i$ is  a spanning tree of $G_i$ for $i=1,\ldots,t$.
\end{Proposition}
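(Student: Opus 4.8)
The plan is to use the paper's definition of a tree directly --- a connected graph that is edge-minimal with respect to being connected --- and to check the three constituent requirements of ``$T$ is a spanning tree of $G$'', namely spanning, connected, and edge-minimal, one at a time, each time passing between $G$ and its pieces $G_i$ through the single structural fact that any two of the vertex sets $V(G_i)$, regarded inside $V(G)$, meet exactly in the identified vertex, which I denote $v$; likewise for the $T_i$. (If $t\le 1$ the statement is trivial, so assume $t\ge 2$.)

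The spanning part is immediate: each vertex of $V(G_i)$ other than $v$ lies in no other $V(G_j)$, while $v\in V(T_i)$ for every $i$ by hypothesis and $V(T_i)\subseteq V(G_i)$ always, so $V(T)=V(G)$ holds if and only if $V(T_i)=V(G_i)$ for all $i$. The crux is an elementary cut decomposition, which I would state for $G$ and apply verbatim to $T$: since every edge of $G$ lies in exactly one $G_i$ with both ends in $V(G_i)$, for every $X\subseteq V(G)$ one has the disjoint union $\delta_G(X)=\bigcup_i \delta_{G_i}(X\cap V(G_i),\,V(G_i)\setminus X)$; and if $X$ is nonempty and proper then some index $i$ makes $X\cap V(G_i)$ nonempty and proper in $V(G_i)$, since the only alternative would force $X=V(G)$ when $v\in X$ and $X=\emptyset$ when $v\notin X$. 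From this, the condition ``$d_T(X)\ge 1$ for every nonempty proper $X\subseteq V(T)$'' is equivalent to the same condition for every $T_i$: a cut of some $T_i$ extends to a cut of $T$ by taking $X$ to be that side of $V(G_i)$, completed by the vertex sets of the remaining $G_j$ according to which side carries $v$; conversely a cut of $T$ restricts, by the displayed identity and the nonempty-proper-restriction claim, to a cut of some $T_i$.

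It remains to match edge-minimality, and here the edge sets $E(T_i)$ are pairwise disjoint, so every edge of $T$ lies in a unique $T_i$. If each $T_i$ is a spanning tree of $G_i$ and some $e\in E(T_i)$ were such that $T-e$ is still connected, the cut identity forces $T_i-e$ to be connected as well, contradicting minimality of $T_i$; hence $T$ is edge-minimal and, with the two preceding paragraphs, $T$ is a spanning tree of $G$. Conversely, suppose $T$ is a spanning tree of $G$ but some $T_i$ is not edge-minimal, witnessed by $e\in E(T_i)$ with $T_i-e$ connected. For any nonempty proper $X\subseteq V(T)$, were $e$ the only edge of $T$ crossing $X$, then by the cut identity $e$ would be the only edge of $T_i$ crossing $X\cap V(G_i)$ (which is then nonempty and proper, being separated by $e$); but a path in $T_i-e$ joining the two ends of $e$ must cross $X$ in an edge different from $e$, a contradiction. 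So every nonempty proper $X$ is crossed by some edge of $T-e$, i.e.\ $T-e$ is connected, contradicting minimality of $T$. Hence each $T_i$ is edge-minimal, thus a tree, thus a spanning tree of $G_i$.

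The argument is routine throughout; the only delicate point is the bookkeeping around the cut vertex $v$ --- keeping track of which side of a given cut contains $v$ --- since this is exactly what controls the passage between a cut of a single $G_i$ (or $T_i$) and a cut of $G$ (or $T$). Alternatively one can sidestep the edge-minimality discussion: under the paper's definition a graph is a tree exactly when it is connected and acyclic, and every cycle of $G$ lies within a single $G_i$ because a cycle leaving $G_i$ would have to traverse the cut vertex $v$ at least twice; thus $T$ is acyclic if and only if every $T_i$ is, which together with the connectivity equivalence and the spanning observation again gives the claim.
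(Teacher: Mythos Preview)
Your argument is correct. The paper does not prove this proposition at all: it is listed among the ``basic graph properties'' in Section~\ref{bas} with the remark that, due to their simplicity, the proofs are omitted. There is therefore no approach to compare against; your cut-decomposition argument (and the cleaner acyclicity alternative you sketch at the end) is a legitimate way to supply the missing details. One cosmetic point: in the connectivity paragraph you write ``completed by the vertex sets of the remaining $G_j$'' while working inside $T$; to be consistent with your own ``apply verbatim to $T$'' you should write $T_j$ there, though once spanning has been established the two coincide and the slip is harmless.
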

\begin{Proposition}\label{fzf}
Let $G$ be a graph isomorphic to $K_4$ and $v \in V(G)$. Further, let $(F_1,F_2)$ be a partition of $\delta_G(v)$. Then there is a spanning tree factorization $(T_1,T_2)$ of $G$ such that $F_1 \subseteq E(T_1)$ and $F_2 \subseteq E(T_2)$ if and only if $F_1 \neq \emptyset$ and $F_2 \neq \emptyset$.
\end{Proposition}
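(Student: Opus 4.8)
The plan is to prove Proposition~\ref{fzf} by an explicit case analysis, which is feasible since $G$ has only four vertices and six edges. Write $V(G)\setminus\{v\}=\{x,y,z\}$, so that $\delta_G(v)=\{vx,vy,vz\}$ and the three remaining edges of $G$ form the triangle $xy,xz,yz$. Since every spanning tree of $G$ has exactly $|V(G)|-1=3$ edges and $|E(G)|=6$, in any spanning tree factorization $(T_1,T_2)$ of $G$ each $T_i$ has exactly three edges; moreover a $3$-edge spanning tree of $K_4$ is precisely a Hamiltonian path.

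For the ``only if'' direction, I would argue by contradiction: if, say, $F_1=\emptyset$, then the requirement $F_1\subseteq E(T_1)$ imposes nothing, but $F_2=\delta_G(v)\subseteq E(T_2)$ forces all three edges at $v$ into $T_2$, so $v$ is isolated in $T_1$ and $T_1$ is not connected, contradicting that $T_1$ is a spanning tree. The case $F_2=\emptyset$ is symmetric. For the ``if'' direction, note that $|\delta_G(v)|=3$ forces $\{|F_1|,|F_2|\}=\{1,2\}$ whenever both are nonempty; after possibly exchanging the roles of the indices $1$ and $2$ we may assume $F_1=\{vx\}$ and $F_2=\{vy,vz\}$. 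Then I would take $T_1:=\{vx,xz,yz\}$ and $T_2:=\{vy,vz,xy\}$. Both are Hamiltonian paths of $G$ (namely $v\,x\,z\,y$ and $x\,y\,v\,z$), hence spanning trees; they are edge-disjoint, and $E(T_1)\cup E(T_2)=\{vx,vy,vz,xy,xz,yz\}=E(G)$, so $(T_1,T_2)$ is a spanning tree factorization. Finally $F_1\subseteq E(T_1)$ and $F_2\subseteq E(T_2)$ hold by construction, and the remaining case $|F_1|=2$ is handled by swapping $T_1$ and $T_2$ in this assignment.

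There is essentially no hard step here; the only points requiring any care are that the pair $(F_1,F_2)$ is ordered, so the two Hamiltonian paths must be assigned to the correct indices, and the (trivial) parity observation that a set of size $3$ cannot be split into two nonempty parts of equal size, which is what makes the single case $F_1=\{vx\}$, $F_2=\{vy,vz\}$ (up to relabelling) exhaustive.
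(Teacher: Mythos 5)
Your proof is correct; the paper omits the proof of Proposition~\ref{fzf} as routine, and your explicit construction (one singleton and one pair at $v$, completed to the two edge-disjoint Hamiltonian paths, plus the observation that $F_i=\emptyset$ would isolate $v$ in the other tree) is exactly the intended elementary argument. One cosmetic slip: a $3$-edge spanning tree of $K_4$ is not necessarily a Hamiltonian path (the star at a vertex is also a spanning tree), but this aside is never used in your argument, so nothing breaks.
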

The next result is an immediate consequence of the fact that the edge set of a complete regular bipartite graph can be partitioned into perfect matchings.
\begin{Proposition}\label{compbip}
Let $G$ be a complete bipartite graph with partition classes $A$ and $B$ where $|A|=k$ and $|B|=k-1$ for some $k \in \mathbb{Z}_+$. Further, let $\sigma:A \rightarrow \{1,\ldots,k\}$ be a bijection. Then there is a function $\phi:E(G)\rightarrow \{1,\ldots,k\}$ such that $\phi(\delta_{G}(b))=\{1,\ldots,k\}$ for all $b \in B$ and $\phi(\delta_{G}(a))=\{1,\ldots,k\}-\sigma(a)$ for all $a \in A$.
\end{Proposition}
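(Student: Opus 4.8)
The plan is to deduce the statement from the quoted fact that a complete regular bipartite graph decomposes into perfect matchings, by first passing to such a graph via the addition of a single vertex. Let $G$ have partition classes $A$ and $B$ with $|A|=k$ and $|B|=k-1$. First I would introduce one new vertex $b^*$ and form the graph $G^+$ obtained from $G$ by adding $b^*$ together with all $k$ edges joining $b^*$ to the vertices of $A$. Since $|A|=k=|B\cup\{b^*\}|$, the graph $G^+$ is a $k$-regular complete bipartite graph, so its edge set decomposes into $k$ perfect matchings $M_1,\dots,M_k$. In each $M_i$ exactly one vertex of $A$ is joined to $b^*$; hence there is a bijection $\tau:A\to\{1,\dots,k\}$ with the edge $ab^*$ lying in $M_{\tau(a)}$ for every $a\in A$.

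Next I would use the freedom to relabel the matchings so as to match the prescribed bijection $\sigma$. Define a permutation $\pi$ of $\{1,\dots,k\}$ by $\pi:=\sigma\circ\tau^{-1}$, which is well defined since both $\sigma$ and $\tau$ are bijections onto $\{1,\dots,k\}$. Now set $\phi:E(G)\to\{1,\dots,k\}$ by letting $\phi(e)=\pi(i)$ for the unique $i$ with $e\in M_i$; note every $e\in E(G)\subseteq E(G^+)$ lies in exactly one $M_i$, so $\phi$ is well defined.

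It then remains to verify the two conditions. For $b\in B$ we have $\delta_G(b)=\delta_{G^+}(b)$ because $b$ is not adjacent to $b^*$, and this set of $k$ edges meets each $M_i$ in exactly one edge, so $\phi(\delta_G(b))=\{\pi(1),\dots,\pi(k)\}=\{1,\dots,k\}$. For $a\in A$, the $k$ edges of $\delta_{G^+}(a)$ meet each $M_i$ in exactly one edge, and the unique one lying outside $G$ is $ab^*\in M_{\tau(a)}$; hence $\delta_G(a)$ consists of one edge from each $M_i$ with $i\neq\tau(a)$, giving $\phi(\delta_G(a))=\{\pi(i):i\neq\tau(a)\}=\{1,\dots,k\}\setminus\{\pi(\tau(a))\}=\{1,\dots,k\}\setminus\{\sigma(a)\}$, as required. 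The case $k=1$ is the degenerate one where $E(G)=\emptyset$ and $\phi$ is the empty function, and the construction above handles it without change.

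There is no real obstacle here: the only point requiring a moment's care is checking that the relabelling permutation $\pi=\sigma\circ\tau^{-1}$ is exactly what is needed to turn the color of the edge $ab^*$ into $\sigma(a)$, so that deleting $b^*$ leaves each $a\in A$ missing precisely the color $\sigma(a)$; everything else is bookkeeping with the perfect-matching decomposition.
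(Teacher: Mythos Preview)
Your proof is correct and follows exactly the approach the paper indicates: the paper omits the proof but states that the result ``is an immediate consequence of the fact that the edge set of a complete regular bipartite graph can be partitioned into perfect matchings,'' and your argument---adding a vertex $b^*$ to obtain $K_{k,k}$, decomposing into perfect matchings, and relabelling via $\pi=\sigma\circ\tau^{-1}$---is precisely the intended derivation.
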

\subsection{Marihuana leaves}\label{weed}

We now give a construction of a class of graphs that plays a significant role in the reduction in Section \ref{redtrees}. This construction is first mentioned by Chow in \cite{chow} where it is attributed to McDiarmid.

A {\it $k$-marihuana leaf} is obtained from $K_4$ by choosing a vertex $v$ and replacing each edge incident to $v$ by $k$ parallel copies of itself. Observe that $K_4$ is a 1-marihuana leaf. An illustration can be found in Figure \ref{fig0}.

\begin{figure}[h]
    \centering
        \includegraphics[width=.5\textwidth]{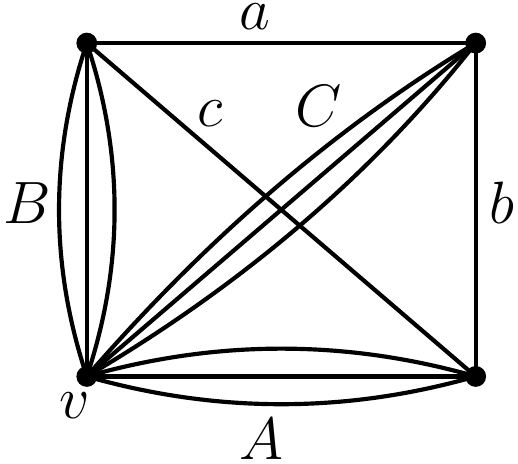}
        \caption{A 3-marihuana leaf whose edges are labeled as in Proposition \ref{long}.}\label{fig0}
\end{figure}

We now collect some properties of marihuana leaves.
\begin{Proposition}\label{long}
Let $G$ be a $(k-1)$-marihuana leaf. Further, let $A$ be a set of $k-1$ parallel edges and $a$ be the unique edge disjoint from the edges in $A$. We similarly define $B,b,C$, and $c$ in a way such that $A \cup B \cup C \cup \{a,b,c\}=E(G)$. Now the following hold:
\begin{enumerate}[(a)]
\item Let $i,j,\ell \in \{1,\ldots,k\}$ with $j \neq \ell$ and $i \in \{j,\ell\}$, and let $(T_1,\ldots,T_k)$ be a factorization of $G$ that satisfies the following conditions:
\begin{itemize}
\item for $p \in \{1,\ldots,k\}-i$, $T_p$ contains exactly one edge of $A$,
\item $T_i$ contains $a$,
\item for $p \in \{1,\ldots,k\}-j$, $T_p$ contains exactly one edge of $B$,
\item $T_j$ contains $c$,
\item for $p \in \{1,\ldots,k\}-\ell$, $T_p$ contains exactly one edge of $C$,
\item $T_{\ell}$ contains $b$.
\end{itemize}
Then for $p=1,\ldots,k$, we have that $T_p$ is a spanning tree of $G$.
\item Let $(T_1,\ldots,T_k)$ be a spanning tree factorization of $G$ such that $|E(T_i)\cap (A \cup a)|=1$ for all $i \in \{1,\ldots,k\}$. Then there are some $j, \ell \in \{1,\ldots,k\}$ with the following properties: 
\end{enumerate} 
\begin{itemize}
\item $j \neq \ell$,
\item for $p \in \{1,\ldots,k\}-j$, $T_p$ contains exactly one edge of $B$,
\item $T_j$ contains $c$,
\item for $p \in \{1,\ldots,k\}-\ell$, $T_p$ contains exactly one edge of $C$,
\item $T_{\ell}$ contains $b$.
\end{itemize}
\end{Proposition}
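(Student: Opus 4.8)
The plan is to set up coordinates on the $(k-1)$-marihuana leaf and reduce everything to a counting argument plus Proposition \ref{fzf}. Write $G$ as having vertices $u_1,u_2,u_3$ (forming a triangle in the underlying $K_4$) and the special vertex $v$ joined to each $u_i$ by $k-1$ parallel edges; by symmetry say $A$ is the parallel class $vu_1$, $B$ is $vu_2$, $C$ is $vu_3$, and $a=u_2u_3$, $b=u_1u_3$, $c=u_1u_2$ (so $a$ is the triangle edge disjoint from $A$, etc.). There are $3(k-1)+3 = 3k$ edges in total, and a factorization into $k$ spanning trees assigns $3$ edges to each $T_p$, since each spanning tree of a graph on $4$ vertices has $3$ edges.

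For part (a): under the stated hypotheses, I would first determine exactly which three edges lie in each $T_p$. For $p \notin \{i,j,\ell\}$, $T_p$ gets one edge from each of $A$, $B$, $C$ and no triangle edge — this is a star centered at $v$, hence a spanning tree. For $p = i$ (and recalling $i \in \{j,\ell\}$, say $i=j\neq\ell$): $T_i$ contains $a=u_2u_3$, contains $c=u_1u_2$ (from the $B$-condition since $p=j$), and contains one edge of $C$ (from the $C$-condition since $i=j\neq\ell$); this is a path $u_1u_2u_3v$ through all four vertices, a spanning tree. For $p=\ell$: $T_\ell$ contains $b=u_1u_3$, one edge of $A$, and one edge of $B$; again one checks this spans. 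The one genuinely delicate point is verifying the bookkeeping is consistent — that the three "exceptional" indices $i,j,\ell$ with $i\in\{j,\ell\}$ really do leave every parallel class and every triangle edge accounted for exactly once — so I would lay out the incidence table explicitly and confirm each of the $3k$ edges is used exactly once, then observe each $T_p$ is either a $v$-star or a Hamiltonian path.

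For part (b): here I start from a spanning tree factorization with $|E(T_i)\cap(A\cup\{a\})|=1$ for every $i$. First I'd argue that exactly one $T_i$, say $T_{i_0}$, contains $a$ and the other $k-1$ trees each contain one edge of $A$; since $|A\cup\{a\}| = k$ and we have $k$ trees each grabbing exactly one of these, and $a$ goes to just one of them. Now contract or delete appropriately: a spanning tree containing $a=u_2u_3$ must, to connect $u_1$ and $v$, use edges inducing a spanning tree of the $K_4$-minor obtained by identifying $u_2$ and $u_3$; this forces a Proposition-\ref{fzf}-type dichotomy on how $b,c$ and the classes $B,C$ are split. The main obstacle — and the part worth the most care — is showing the split of $\{b,c\}\cup B\cup C$ among the trees is forced into exactly the pattern claimed: that there exist $j\neq\ell$ with $c\in E(T_j)$, $b\in E(T_\ell)$, every other tree taking one $B$-edge and one $C$-edge. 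I would get this by a degree/parity count at $v$ (each $T_p$ uses either $0$ or $1$ or $2$ edges at $v$ from the classes, and the totals must match $|A|+|B|+|C| = 3(k-1)$ used edges at $v$ distributed over $k$ trees), combined with the observation that a spanning tree of $G$ using two triangle edges at once cannot also reach $v$ with its remaining single edge unless those two triangle edges share the vertex opposite $v$'s neighbor — ruling out any tree from containing two of $\{a,b,c\}$. Once no tree contains two triangle edges and $T_{i_0}$ is the unique tree with $a$, a clean counting argument pins down that the trees split into: one with $a$+($B$ or $C$ edge)+$c$ or $b$, etc., yielding the desired $j,\ell$.

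I expect part (b) to be where essentially all the work lies; part (a) is a direct verification. A useful lemma to isolate first is: in any spanning tree factorization of $G$, no single tree contains two distinct triangle edges, equivalently every triangle edge lies in a tree with exactly one edge at $v$ and the tree is a Hamiltonian path. With that lemma in hand, both parts become short.
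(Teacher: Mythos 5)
Your reading of part (a) is correct (the paper itself dismisses it as a routine check), but the engine you propose for part (b) is broken. The ``useful lemma'' you want to isolate --- that in a spanning tree factorization of $G$ no single tree contains two of the triangle edges $a,b,c$ --- is false, and in fact it contradicts part (a) of the very proposition: taking $i=j$ there produces a factorization in which $T_i$ contains both $a$ and $c$. Concretely, for $k=2$ (so $G=K_4$, with your coordinates) the factorization into the two Hamiltonian paths $\{c,a,vu_3\}$ and $\{vu_1,vu_2,b\}$ satisfies the hypothesis $|E(T_i)\cap(A\cup a)|=1$ for both trees, yet the first tree contains the two triangle edges $a$ and $c$. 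The supporting observation is also wrong: any two distinct triangle edges form a path on $u_1,u_2,u_3$, so adding \emph{any} one edge at $v$ always yields a spanning tree --- there is no ``unless'' condition that rules this out. Indeed, under the hypotheses of (b) the tree containing $a$ is \emph{forced} to contain $b$ or $c$: it contains no $A$-edge, and if it avoided $b$ and $c$ it would consist of $a$ together with one $B$-edge and one $C$-edge, which is the triangle through $v$ on $u_2,u_3$. Your sketch is even internally inconsistent on this point, since the final counting pattern you describe (``one with $a$ + ($B$ or $C$ edge) + $c$ or $b$'') is itself a tree with two triangle edges, violating the lemma you just invoked.

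The correct dichotomy, which is what the paper proves, is not ``no two triangle edges in one tree'' but rather: the tree $T_j$ containing $c$ contains no edge of $B$, the tree $T_\ell$ containing $b$ contains no edge of $C$, and $j\neq\ell$. The forbidden configurations are the triangles through $v$ (such as $\{c,\,B\text{-edge},\,A\text{-edge}\}$ or $\{a,\,B\text{-edge},\,C\text{-edge}\}$) and the triangle $\{a,b,c\}$, not arbitrary pairs from $\{a,b,c\}$. The paper's argument is short: if $T_j$ contained a $B$-edge, triangle-freeness forbids an $A$-edge, so by the hypothesis $a\in E(T_j)$; then $T_j$ is full, $b$ lies in some other tree, and since $T_j$ misses all of $A$ and all of $C$, each of the other $k-1$ trees must take exactly one $A$-edge and one $C$-edge (no tree can hold two parallel edges), so the tree containing $b$ acquires a triangle through $v$ --- contradiction. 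Then the $k-1$ edges of $B$ spread one per tree over the remaining trees, symmetrically for $b$ and $C$, and $j=\ell$ is excluded because the tree containing $a$ would then pick up a $B$-edge and a $C$-edge, again a triangle. Your contraction idea (identifying $u_2$ and $u_3$ in the tree containing $a$) could perhaps be developed into an alternative argument, but as written the proposal's central claim fails, so part (b) is not established.
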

\begin{proof}
$(a)$ can readily be checked. For $(b)$, let $j \in \{1,\ldots,k\}$ such that $E(T_j)$ contains $c$. 

\begin{Claim}$E(T_j)\cap B= \emptyset$.
\end{Claim}
\begin{proof}Suppose otherwise. Hence, as $T_j$ does not contain a triangle, we obtain that $E(T_j)\cap A= \emptyset$. By $|E(T_i)\cap (A \cup a)|=1$ for all $i \in \{1,\ldots,k\}$, we obtain that $a \in E(T_j)$. As $E(T_j)$ contains only 3 edges, we obtain that $E(T_j)\cap C= \emptyset$ and there is some $\ell \in \{1,\ldots,k\}-j$ such that $b \in E(T_{\ell})$. By $|A|=|C|=k-1, E(T_j)\cap A= \emptyset, E(T_j)\cap C= \emptyset$ and the fact that $(T_1,\ldots,T_k)$ is a set of spanning trees, we obtain that $E(T_{\ell})\cap A \neq \emptyset$ and $E(T_{\ell})\cap C \neq \emptyset$. This yields that $T_{\ell}$ contains a triangle, a contradiction. 
\end{proof}

As $B$ contains $k-1$ parallel edges and no spanning tree can contain two parallel edges, we obtain that  for $p \in \{1,\ldots,k\}-j$, $T_p$ contains exactly one edge of $B$. Similarly, there is some $\ell \in \{1,\ldots,k\}$ such that $T_{\ell}$ contains $b$ and for $p \in \{1,\ldots,k\}-\ell$, $T_p$ contains exactly one edge of $C$. Finally, suppose that $j = \ell$. As $T_j$ does not contain a triangle, there is some $m\in \{1,\ldots,k\}-j$ such that $a \in E(T_m)$. As  $E(T_j)\cap B= \emptyset$ and $E(T_j)\cap C= \emptyset$, we obtain that $E(T_m)\cap B\neq \emptyset$ and $E(T_m)\cap C\neq \emptyset$. This yields that $T_m$ contains a triangle, a contradiction.
\end{proof}
\section{Complexity}\label{complex}
The objective of this section is to prove the negative results we have on the complexity of several problems on packing common bases of a graphic matroid and a partition matroid. In Section \ref{redtrees}, we prove Theorem \ref{hard1} and in Section \ref{gziipj}, we prove Theorem \ref{hard2}.

\subsection{The proof of Theorem \ref{hard1}}\label{redtrees}

The purpose of this section is to show that RST$k$F is NP-hard. This proof is separated into three parts. In the first two parts, we consider the case $k=2$. In Section \ref{genk2}, we show that a slightly more general problem is NP-hard. In Section \ref{concfac}, we use the result of Section \ref{genk2} to finish the reduction for $k=2$.  In Section \ref{genk3}, we give the reduction for $k \geq 3$. 

\subsubsection{A slightly more general problem}\label{genk2}

As a preliminary step for proving that RST2F is hard, we here show the hardness of the following problem in which the condition on the partition is somewhat relaxed. 

\medskip 

\noindent \textbf{Generalized Rainbow Spanning Tree 2-Factorization (GRST2F):}
\smallskip

\noindent\textbf{Input:} A double tree $G$, a 2-bounded partition $\mathcal{P}$ of $E(G)$.
\smallskip

\noindent\textbf{Question:} Can $G$ be factorized into 2 rainbow spanning trees?
\medskip
\begin{Lemma}\label{general}
GRST2F is NP-hard.
\end{Lemma}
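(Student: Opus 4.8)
The plan is to reduce from MNAE3SAT, which is NP-hard by Proposition~\ref{sathard}. Given an instance with variable set $X$ and clause set $\mathcal{C}$, I would build a double tree $G$ together with a $2$-bounded partition $\mathcal{P}$ so that factorizations of $G$ into two rainbow spanning trees correspond to not-all-equal satisfying assignments. The central gadget is the $K_4$ with the ``antipodal'' partition of Figure~\ref{drftg}: for such a copy of $K_4$, Proposition~\ref{fzf} tells us exactly which pairs $(F_1,F_2)$ of parts of $\delta_{K_4}(v)$ at a chosen vertex $v$ can be extended to a spanning tree factorization, namely all pairs with both $F_i$ nonempty. Crucially, the $K_4$ has a genuine binary choice: once we decide which spanning tree gets which pair of spoke edges at $v$, the rest is essentially forced. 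I would use one such $K_4$ gadget per variable to encode its truth value, wiring the two ``output'' edges of the gadget (via vertex identification, invoking Proposition~\ref{ident} to control spanning-tree-ness of the whole from that of the pieces) into the clause gadgets.

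The clause gadget must enforce, for each clause on variables $x,y,z$, that the three incoming ``colors'' (tree indices $1$ or $2$) are not all equal; this is precisely the NAE condition on three Boolean values. I would realize this with a small graph — plausibly another $K_4$-like gadget, or a short path/cycle of parallel-edge bundles — attached so that a vertex where the three variable outputs meet is forced into a proper $2$-edge-structure only when the three indices take both values. Each added part of $\mathcal{P}$ has size at most $2$, and I would check that the ambient graph $G$ is a double tree by exhibiting the factorization coming from any fixed assignment (so a factorization into two spanning trees always exists, regardless of the NAE constraints, as required by the problem's promise). The forward direction (NAE assignment $\Rightarrow$ rainbow factorization) is then a matter of assembling the local factorizations of each gadget consistently via Propositions~\ref{ident} and~\ref{fzf}; the reverse direction (rainbow factorization $\Rightarrow$ NAE assignment) requires arguing that in any rainbow spanning tree factorization, each variable gadget behaves in one of the two ``pure'' ways and the clause gadget indeed blocks the monochromatic case.

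The main obstacle I anticipate is the consistency of the wiring: when several gadgets share identified vertices, I need the rainbow constraint (each size-$2$ part split between the two trees) to propagate the tree index along the wires without introducing unwanted freedom, and I must ensure the clause gadget cannot be ``satisfied'' by cheating — e.g. by a spanning tree locally picking up extra edges that circumvent the intended color bookkeeping. Handling parallel edges carefully (a spanning tree contains at most one of any parallel class, as used repeatedly in the proof of Proposition~\ref{long}) will be the workhorse for forcing the bundles of size $k-1$ or $k$ to distribute one-per-tree. A secondary technical point is keeping the partition $2$-bounded everywhere: any place where I would naturally want a part of size $3$ (for instance to tie together the three clause inputs) must be simulated by a chain of size-$2$ parts, and I would need to verify this simulation does not leak extra solutions.

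Once GRST2F is shown NP-hard, Theorem~\ref{hard1} for $k=2$ would follow by the reduction promised in Section~\ref{concfac} (padding a $2$-bounded partition into a $2$-uniform one while preserving the double-tree property and the set of valid factorizations), and the case $k\geq 3$ by the separate reduction of Section~\ref{genk3}. So the only thing this lemma itself needs is the MNAE3SAT reduction with the $K_4$ variable gadgets and an NAE-enforcing clause gadget, all glued by vertex identification.
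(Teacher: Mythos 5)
Your skeleton coincides with the paper's: a reduction from MNAE3SAT (Proposition~\ref{sathard}), copies of $K_4$ glued at one identified vertex so that Proposition~\ref{ident} controls spanning-tree-ness, size-$2$ partition classes used as wires, and the NAE condition enforced by Proposition~\ref{fzf} at a vertex of a clause $K_4$ where the three inputs arrive. So the source problem and all the tools are the right ones. The genuine gap is in the variable encoding, which is exactly the part you leave unconstructed: ``one $K_4$ gadget per variable'' with ``two output edges'' cannot serve a variable that occurs in many clauses. After reserving one antipodal pair of the $K_4$ as the forcing class, only the four remaining edges (two antipodal pairs) are available as outputs, so a single copy supplies only a constant number of wires, while MNAE3SAT variables may occur in arbitrarily many clauses; nothing in your sketch says how the truth value is carried to all occurrences. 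The paper's resolution is the substantive content of the reduction: it uses one $K_4$ per \emph{occurrence} $(x,C)$, where the class $\{e_1^{(x,C)},e_2^{(x,C)}\}$ together with Proposition~\ref{long}(b) forces the two remaining antipodal pairs to travel as whole pairs in opposite trees, and consecutive occurrences of the same variable are threaded by classes $\{f_1^{(x,C_i)},g_1^{(x,C_{i+1})}\}$, which propagate the value along the chain; each occurrence then sends its edge $f_2^{(x,C)}$ to the clause $K_4$ via a class $\{h,f_2^{(x,C)}\}$ with $h\in\{e^C,f^C,g^C\}$, and Proposition~\ref{fzf} at the common vertex of $e^C,f^C,g^C$ is precisely the not-all-equal test. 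Without this occurrence-plus-chain construction (or an equivalent fan-out mechanism), the reduction as described fails already for variables of high degree, and the ``consistency of the wiring'' you flag as a worry is exactly the point that has to be proved.

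Two smaller inaccuracies: the binary forcing inside the variable/occurrence gadget is not given by Proposition~\ref{fzf}, which concerns a prescribed split of the star $\delta_G(v)$ and is the clause-side tool, but by Proposition~\ref{long}(b) in the case $k=2$; and the clause gadget is not merely ``plausibly another $K_4$-like gadget'' but specifically a $K_4$ whose three designated edges share a vertex, since that is what makes Proposition~\ref{fzf} equivalent to the NAE condition. These are fixable, but as it stands the proposal is an outline of the paper's approach rather than a proof: the correctness argument in both directions (assignment $\Rightarrow$ rainbow factorization and conversely) hinges on the unspecified gadget wiring.
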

\begin{proof}
We give a reduction from MNAE3SAT. Let $(X,\mathcal{C})$ be an instance of MNAE3SAT. Let $Z$ be the set of pairs $(x,C)$ such that $x \in C \in \mathcal{C}$. For every $(x,C) \in Z $, let $G^{(x,C)}$ be a copy of $K_4$ in which $\{e_1^{(x,C)},e_2^{(x,C)}\},\{f_1^{(x,C)},f_2^{(x,C)}\}$ and $\{g_1^{(x,C)},g_2^{(x,C)}\}$ are the 3 unique pairs of non-adjacent edges.  For every $C \in \mathcal{C} $, let $G^C$ be a copy of $K_4$ in which $\{e^C,f^C,g^C\}$ are 3 edges all incident to a common vertex. We now define $G$ by choosing an arbitrary vertex $v_M \in V(G^M)$ for all $M \in Z \cup \mathcal{C}$ and identifying all these vertices into a single vertex. We next define $\mathcal{P}$. First, for all $(x,C) \in Z$, let $\mathcal{P}$ contain $\{e_1^{(x,C)},e_2^{(x,C)}\}$. Next, for every $x \in X$, let $C_1,\ldots,C_{t_x}$ be an arbitrary ordering of the clauses containing $x$. For all $x \in X$ and $i=1,\ldots,{t_x}-1$, let $\mathcal{P}$ contain $\{f_1^{(x,C_i)},g_1^{(x,C_{i+1})}\}$. Finally, for $(x,C)\in Z$, we choose an edge $h \in \{e^C,f^C,g^C\}$ and add $\{h,f_2^{(x,C)}\}$ to $\mathcal{P}$. All remaining edges form one-element classes of $\mathcal{P}$. As $(X,\mathcal{C})$ is an instance of MNAE3SAT, we can choose $\mathcal{P}$ to be a 2-bounded partition of $E(G)$.

We now show that $(X,\mathcal{C})$ is a positive instance of MNAE3SAT if and only if $(G,\mathcal{P})$ is a positive instance of GRST2F.

First suppose that $(X,\mathcal{C})$ is a positive instance of MNAE3SAT, so there is a truth assignment $\Phi:X \rightarrow \{True,False\}$ such that every $C \in \mathcal{C}$ contains at least one true and one false literal. For every $(x,C) \in Z$ such that $\Phi(x)=true$, let $T_1^{(x,C)}$ be the spanning tree of $G^{(x,C)}$ whose edge set is $\{e_1^{(x,C)},f_1^{(x,C)},f_2^{(x,C)}\}$ and  let $T_2^{(x,C)}$ be the spanning tree of $G^{(x,C)}$ whose edge set is $\{e_2^{(x,C)},g_1^{(x,C)},g_2^{(x,C)}\}$.  For every $(x,C) \in Z$ such that $\Phi(x)=false$, let $T_1^{(x,C)}$ be the spanning tree of $G^{(x,C)}$ whose edge set is $\{e_1^{(x,C)},g_1^{(x,C)},g_2^{(x,C)}\}$ and  let $T_2^{(x,C)}$ be the spanning tree of $G^{(x,C)}$ whose edge set is $\{e_2^{(x,C)},f_1^{(x,C)},f_2^{(x,C)}\}$. Observe that in either case, $(T_1^{(x,C)},T_2^{(x,C)})$ is a spanning tree factorization of $G^{(x,C)}$.

Now consider some $C \in \mathcal{C}$. We create sets $E_1^C,E_2^C$ in the following way: Let $h \in \{e^C,f^C,g^C\}$. Observe that there is some unique $(x,C) \in Z$ such that $\{f_2^{(x,C)},h\} \in \mathcal{P}$. If $f_2^{(x,C)} \in E(T_1^{(x,C)})$, let $h$ be contained in $E_2^C$ and if $f_2^{(x,C)} \in E(T_2^{(x,C)})$, let $h$ be contained in $E_1^C$. 

\begin{Claim}
Both $E_1^C$ and $E_2^C$ are nonempty. 
\end{Claim}
\begin{proof}
As $\Phi$ satisfies $(X,\mathcal{C})$, there is some $x \in X$ such that $(x,C)\in Z$ and $\Phi(x)=true$. It follows that $f_2^{(x,C)}\in E(T_1^{(x,C)})$. Now $\mathcal{P}$ contains $\{f_2^{(x,C)},h\}$ for some $h \in \{e^C,f^C,g^C\}$, so $h \in E_2^C$. It follows that $E_2^C$ is nonempty. Similarly, the fact that there is some $x \in X$ such that $(x,C)\in Z$ and $\Phi(x)=false$ implies that $E_1^C$ is nonempty.
\end{proof}

By Proposition \ref{fzf}, we obtain that there is a spanning tree factorization $(T_1^C,T_2^C)$ of $G^C$ such that $E_1^C\subseteq E(T_1^C)$ and $E_2^C\subseteq E(T_2^C)$. Now, for $i=1,2$, let $T_i$ be obtained from the union of $\{T_i^M|M \in \mathcal{C} \cup Z\}$ by identifying the vertices $v_M$ for all $M \in \mathcal{C}\cup Z$. By Proposition \ref{ident}, $(T_1,T_2)$ is a spanning tree factorization of $G$. Further, by construction, both of $T_1$ and $T_2$ are rainbow.

Now suppose that $(G,\mathcal{P})$ is a positive instance of GRST2F, so there is a factorization of $G$ into two rainbow spanning trees $(T_1,T_2)$. For $(x,C)\in Z$ and $i \in \{1,2\}$, let $T_i^{(x,C)}$ be the restriction of $T_i$ to $G^{(x,C)}$. By Proposition \ref{ident}, $(T_1^{(x,C)}, T_2^{(x,C)})$ is a spanning tree factorization of $G^{(x,C)}$.

\begin{Claim}
For every $x \in X$, there is some $i \in \{1,2\}$ such that $f_2^{(x,C)}\in E(T_i)$ for all $C \in \mathcal{C}$ with $(x,C) \in Z$.
\end{Claim}
\begin{proof}Let $C_1,\ldots,C_{t_x}$ be the ordering of the clauses containing $x$ that was used in the construction of $(G,\mathcal{P})$. By symmetry and induction, it suffices to prove that if $f_2^{(x,C_j)}\in E(T_1)$, then $f_2^{(x,C_{j+1})}\in E(T_1)$ for all $j\in 1,\ldots,t_x-1$. Fix some $j\in 1,\ldots,t_x-1$ and suppose that $f_2^{(x,C_j)}\in E(T_1)$. As $(T_1^{(x,C_j)}, T_2^{(x,C_j)})$ is a spanning tree factorization of $G^{(x,C_j)}$ and $\{e_1^{(x,C_j)},e_2^{(x,C_j)}\} \in \mathcal{P}$, we obtain by Proposition \ref{long} $(b)$ that $f_1^{(x,C_j)}\in E(T_1)$. As $\{f_1^{(x,C_j)},g_1^{(x,C_{j+1})}\} \in \mathcal{P}$, we obtain that $g_1^{(x,C_{j+1})}\in E(T_2)$. Hence, as $(T_1^{(x,C_{j+1})}, T_2^{(x,C_{j+1})})$ is a spanning tree factorization of $G^{(x,C_{j+1})}$ and $\{e_1^{(x,C_{j+1})},e_2^{(x,C_{j+1})}\} \in \mathcal{P}$, we obtain by Proposition \ref{long} $(b)$ that $f_2^{(x,C_{j+1})}\in E(T_1)$.
\end{proof}

We may now define a truth assignment $\Phi:X \rightarrow \{true,false\}$ for $X$ in the following way: We let $\Phi(x)=true$ if $f_2^{(x,C)}\in E(T_1)$ for all $C \in \mathcal{C}$ with $(x,C) \in Z$ and $\Phi(x)=false$ if $f_2^{(x,C)}\in E(T_2)$ for all $C \in \mathcal{C}$ with $(x,C) \in Z$. 

\begin{Claim}
For every $C \in \mathcal{C}$, there is a variable $x \in X$ with $(x,C) \in Z$ and $\Phi(x)=true$.
\end{Claim}
\begin{proof}
By Proposition \ref{ident}, $(T_1[V(G^C)],T_2[V(G^C)])$ is a spanning tree factorization of $G^C$. Hence by Proposition \ref{fzf}, there is some $h \in \{e^C,f^C,g^C\}$ such that $h \in E(T_2[V(G^C)])\subseteq E(T_2)$. There is a unique variable $x \in X$ such that $(x,C)\in Z$ and $\{f_2^{(x,C)},h\} \in \mathcal{P}$. As $(T_1,T_2)$ is a factorization into rainbow spanning trees of $G$ with respect to $\mathcal{P}$, we obtain that $f_2^{(x,C)} \in E(T_1)$. We obtain by construction that $\Phi(x)=true$. 
\end{proof}

Similarly, there is a variable $x \in X$ with $(x,C) \in Z$ and $\Phi(x)=false$. As $C$ was chosen arbitrarily, we obtain that $(X,\mathcal{C})$ is a positive instance of MNAE3SAT. This finishes the proof.
\end{proof}

\subsubsection{The reduction of RST2F}\label{concfac}

In this section we prove the NP-completeness of RST2F through a reduction from GRST2F. Let $(G,\mathcal{P})$ be an instance of GRST2F. We now create an instance $(G',\mathcal{P}')$ of RST2F. We first create 2 copies $G_1,G_2$ of $G$. For every $v \in V(G)$ and $i \in \{1,2\}$, we denote by $v_i$ the copy of $v$ in $G_i$ and for every $e \in E(G)$  and $i \in \{1,2\}$, we denote by $e_i$ the copy of $e$ in $G_i$. We obtain $G'$ by choosing some arbitrary $v \in V(G)$ and identifying $\{v_1,v_2\}$ into a single vertex. Next for all $X \in \mathcal{P}$ and $i \in\{1,2\}$ with $| X|=2$, we let $\mathcal{P}'$ contain the partition class $X_i=\{e_i:e \in X\}$. Further, for every $e \in E(G)$ which is a one element class of $\mathcal{P}$, we let $\mathcal{P}'$ contain the partition class $Y_e=\{e_1,e_2\}$. Observe that $G'$ is a double tree by Proposition \ref{ident} and that $\mathcal{P}'$ is a 2-uniform partition of $E(G')$. We now show that $(G',\mathcal{P}')$ is a positive instance of RST2F if and only if $(G,\mathcal{P})$ is a positive instance of GRST2F.

First suppose that $(G',\mathcal{P}')$ is a positive instance of RST2F, so there is a factorization $(T'_1,T'_2)$ of $G'$ such that $T'_i$ is rainbow with respect to $\mathcal{P}'$ for $i=1,2$. For $i=1,2$, let $T_i$ be the spanning subgraph of $G$ whose edge set is $\{e \in E(G)|e_1 \in E(T'_i)\}$. As $T'_i[V(G_1)]$ is a spanning tree of $G_1$ by Proposition \ref{ident}, we obtain that $T_i$ is a spanning tree of $G$. Also, for any $X \in \mathcal{P}$, we have that $|X_1 \cap E(T'_i)|\leq 1$, hence $|X \cap E(T_i)|\leq 1$, so $T_i$ is rainbow with respect to $\mathcal{P}$. Finally, as $T_1'$ and $T_2'$ are edge-disjoint , so are $T_1$ and $T_2$. Hence $(T_1,T_2)$ is a factorization of $G$ into spanning trees which are rainbow with respect to $\mathcal{P}$, so $(G,\mathcal{P})$ is a positive instance of GRST2F.

Now suppose that $(G,\mathcal{P})$ is a positive instance of GRST2F, so there is a factorization $(T_1,T_2)$ of $G$ into spanning trees which are rainbow with respect to $\mathcal{P}$. 
 For $j=1,2$, let $T_j'$ be the spanning subgraph of $G'$ with edge set $$\{e_1:e \in E(T_j)\}\cup \{e_2: e \in E(T_{3-j})\}.$$
 It follows straight from the definition that $T'_1$ and $T'_{2}$ are edge-disjoint. For all $i,j \in \{1,2\}$, we have that $T'_j[V(G_i)]$ is a spanning tree of $G_i$. It follows from Proposition \ref{ident} that $T'_j$ is a spanning tree of $G'$. We still need to show that $T'_j$ is rainbow with respect to $\mathcal{P}'$. By symmetry, it suffices to prove that $T'_1$ is rainbow with respect to $\mathcal{P}'$. For every $X \in \mathcal{P}$, we have $|E(T'_1)\cap X_1|=|E(T_1)\cap X|\leq 1$. Similary, we have $|E(T'_1)\cap X_2| \leq 1$ for all $X \in \mathcal{P}$. For all $e \in E(G)-\bigcup \mathcal{P}$, we have $|E(T'_j)\cap Y_e|=1$ by construction. Hence $T'_1$ is rainbow with respect to $\mathcal{P}'$. It follows that $(G',\mathcal{P}')$ is a positive instance of RST2F.

By Lemma \ref{general}, we obtain that Theorem \ref{hard1} holds for $k=2$.

\subsubsection{The reduction of RST$k$F for $k\geq 3$}\label{genk3}
We fix some $k \geq 3$ and show that RST$k$F is NP-hard using a reduction from $k$COL. Let $H$ be a graph. We now create an instance $(G,\mathcal{P})$ of RST$k$F which is positive if and only if $H$ is a positive instance of $k$COL. For every $e=uv \in E(H)$, we let $G$ contain a $(k-1)$-marihuana leaf $G^{e}$. We denote a set of $k-1$ parallel edges in $E(G^{e})$ by $F^{e}$ and the unique edge in $E(G^{e})$ which is disjoint from all edges in $F^{e}$ by $f^{e}$. Further, we choose a set of $k-1$ parallel edges in $E(G^{e})$ which is different from $F^{e}$ and denote it by $F^{e}_u$. The unique set of $k-1$ parallel edges in $G^{e}$ which is different from both $F^{e}$ and $F^{e}_u$ is denoted by $F^{e}_v$. Finally we denote the unique edge in $E(G^{e})-(F^{e}\cup f_e \cup F^{e}_u \cup F^{e}_v)$ which shares one vertex with the edges in $F^{e}_u$ by $f^{e}_u$ and the unique edge in $E(G^{e})-(F^{e}\cup f_e \cup F^{e}_u \cup F^{e}_v)$ which shares one vertex with the edges in $F^{e}_v$ by $f^{e}_v$. An illustration can be found in Figure \ref{fig1}.

\begin{figure}[h]
    \centering
        \includegraphics[width=.5\textwidth]{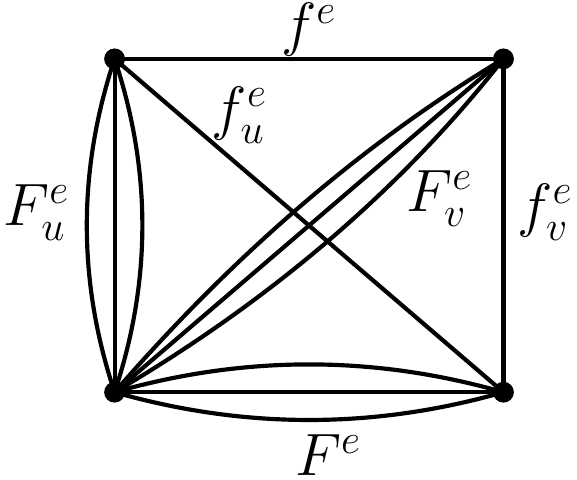}
        \caption{A 3-marihuana leaf whose edges are labeled as in Section \ref{genk3}.}\label{fig1}
\end{figure}

We now obtain $G$ by picking an arbitrary vertex $w_e \in V(G^{e})$ for all $e \in E(H)$ and identifying all the vertices $w_e$ into a single vertex. By Proposition \ref{long} $(a)$ and Proposition \ref{ident}, we obtain that $G$ is a $k$-multiple tree. In order to define $\mathcal{P}$, we first add $F^{e}\cup f^{e}$ to $\mathcal{P}$ for all $e \in E(H)$. Next, for every $v \in V(H)$, let $e_1,\ldots,e_{t_v}$ be an arbitrary ordering of the edges $v$ is incident to in $H$. For every $i=1,\ldots,t_v-1$, we add $F^{e_i}_v\cup f^{e_{i+1}}_v$ to $\mathcal{P}$. We also add $F^{e_{t_v}}_v\cup f^{e_{1}}_v$ to $\mathcal{P}$. Observe that $\mathcal{P}$ is a $k$-uniform partition of $E(G)$. This finishes the definition of $(G,\mathcal{P})$. We now show that $(G,\mathcal{P})$ is a positive instance of RST$k$F if and only if $H$ is a positive instance of $k$COL.

First suppose that $(G,\mathcal{P})$ is a positive instance of RST$k$F, so there is a factorization of $G$ into $k$ spanning trees $(T_1,\ldots,T_k)$ which are rainbow with respect to $\mathcal{P}$. For every $e \in E(H)$ and $i \in \{1,\ldots,k\}$, let $T_i^{e}$ be the restriction of $T_i$ to $V(G_e)$. By Proposition \ref{ident}, we obtain that $(T_1^{e},\ldots,T_k^{e})$ is a spanning tree factorization of $G^{e}$ for all $e \in E(H)$.

\begin{Claim}\label{ds}
For every $v \in V(H)$, there is some $i \in \{1,\ldots,k\}$ such that $f^{e}_v \in E(T_i)$ for all $e \in\delta_H(v)$.
\end{Claim}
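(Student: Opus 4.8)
The plan is to analyze, for a fixed vertex $v \in V(H)$, how the rainbow spanning trees interact with the chain of partition classes $F^{e_i}_v \cup f^{e_{i+1}}_v$ (indices mod $t_v$) that was built around $v$. First I would observe that since $F^e \cup f^e \in \mathcal{P}$ for every $e \in E(H)$, each spanning tree $T_p$ contains at most one edge of $F^e \cup f^e$; as $(T_1^e,\ldots,T_k^e)$ is a spanning tree factorization of the $(k-1)$-marihuana leaf $G^e$ and $|F^e \cup f^e| = k$, in fact each $T_p$ contains exactly one edge of $F^e \cup f^e$. This is precisely the hypothesis of Proposition \ref{long}$(b)$ applied with $A \cup a = F^e \cup f^e$, so Proposition \ref{long}$(b)$ applies to the pairs $(F^e_u, f^e_u)$ and $(F^e_v, f^e_v)$ at $G^e$.

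Next I would show that there is a unique index $i(v) \in \{1,\ldots,k\}$ with $f^{e_1}_v \in E(T_{i(v)})$ and then propagate it around the cycle $e_1, e_2, \ldots, e_{t_v}, e_1$. The key propagation step: suppose $f^{e_j}_v \in E(T_p)$ for some $j$. By Proposition \ref{long}$(b)$ at $G^{e_j}$ (with roles $B = F^{e_j}_v$, $c = f^{e_j}_v$, $C = F^{e_j}_u$, $b = f^{e_j}_u$, or the symmetric choice — one has to check the labelling), the spanning tree $T_p$ that contains $c = f^{e_j}_v$ must satisfy $E(T_p) \cap F^{e_j}_v = \emptyset$, hence for every $q \neq p$, $T_q$ contains exactly one edge of $F^{e_j}_v$, and in particular $T_p$ contains no edge of $F^{e_j}_v$. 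Now $\mathcal{P}$ contains $F^{e_j}_v \cup f^{e_{j+1}}_v$, and $T_p$ is rainbow, so since $T_p \cap F^{e_j}_v = \emptyset$ there is no constraint yet; the right deduction is rather that $f^{e_{j+1}}_v$ lies in whichever tree avoids $F^{e_j}_v$ entirely — but exactly one tree avoids $F^{e_j}_v$ (the one containing $f^{e_j}_v$, namely $T_p$), so $f^{e_{j+1}}_v \in E(T_p)$. Iterating around the cyclic order gives $f^{e_j}_v \in E(T_{i(v)})$ for all $j$, i.e.\ for all $e \in \delta_H(v)$, which is the claim with $i = i(v)$.

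I expect the main obstacle to be bookkeeping the labelling of the marihuana leaf correctly: $G^e$ has three "petals" $F^e, F^e_u, F^e_v$ with opposite singleton edges $f^e, f^e_u, f^e_v$, and Proposition \ref{long}$(b)$ is stated with a fixed choice of which petal plays the role of $A$. The subtlety is that $(b)$ only tells us that the tree containing the singleton $c$ of a petal avoids the two parallel-edge sets $A$ and (in the notation of $(b)$) the set $B$ whose singleton is $c$; I need to be careful that it really forces $f^{e_{j+1}}_v$ into the same tree and not merely that it cannot be in the tree containing some edge of $F^{e_j}_v$. Concretely, the clean statement to extract is: in a spanning tree factorization of $G^e$ with each $T_p$ meeting $F^e \cup f^e$ in exactly one edge, the tree containing $f^e_v$ is the unique tree disjoint from $F^e_v$. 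Granting that (which is the content of the claim inside the proof of Proposition \ref{long}$(b)$, read for the petal $(F^e_v, f^e_v)$), the rainbow condition on $F^{e_j}_v \cup f^{e_{j+1}}_v$ forces $f^{e_{j+1}}_v$ out of every tree meeting $F^{e_j}_v$, hence into $T_{i(v)}$, and induction around the cycle finishes the proof.
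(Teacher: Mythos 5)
Your proposal is correct and follows essentially the same route as the paper: apply Proposition \ref{long}$(b)$ to $G^{e_j}$ (the hypothesis being supplied by the class $F^{e_j}\cup f^{e_j}\in\mathcal{P}$ together with the factorization), conclude that the tree containing $f^{e_j}_v$ is the unique tree disjoint from $F^{e_j}_v$, and then use the class $F^{e_j}_v\cup f^{e_{j+1}}_v$ and the rainbow property to force $f^{e_{j+1}}_v$ into that same tree, propagating along the ordering of $\delta_H(v)$. Your ``clean statement'' resolving the labelling worry is exactly what $(b)$ yields (with $B=F^{e_j}_v$, $C=F^{e_j}_u$, $c=f^{e_j}_v$), so the argument is complete.
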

\begin{proof}
Let $e_1,\ldots,e_{t_v}$ be the ordering of the edges incident to $v$ used in the construction of $(G,\mathcal{P})$. It suffices to prove that if $f^{e_j}_v\in E(T_i)$ for some $j \in \{1,\ldots t_v-1\}$ and $i \in \{1,\ldots,k\}$, then $f^{e_{j+1}}_v\in E(T_i)$. As $(T_1^{e_j},\ldots,T_k^{e_j})$ is a spanning tree factorization of $G^{e_j}, F^{e_j}\cup f^{e_j} \in \mathcal{P}$, and by Proposition \ref{long} $(b)$, we obtain that $|F^{e_j}_v \cap E(T_{\ell})|=1$ for all $\ell \in \{1,\ldots,k\}-i$ and $|F^{e_j}_v \cap E(T_i)|=0$. As $F^{e_j}_v\cup f^{e_{j+1}}_v \in \mathcal{P}$, we obtain that $f^{e_{j+1}}_v\in E(T_i)$.
\end{proof}

We now define a coloring $\phi:V(H) \rightarrow \{1,\ldots,k\}$ in the following way: we set $\phi(v)=j$ if $f^{e}_v\in E(T_j)$ for all $e \in \delta_H(v)$. Observe that $\phi$ is well-defined by Claim \ref{ds}. In order to show that $\phi$ is a proper vertex-coloring of $H$, consider some $e=uv \in E(H)$. As $(T_1^{e},\ldots,T_k^{e})$ is a spanning tree factorization of $G^{e}$ and by Proposition \ref{long} $(b)$, we obtain that there are some $i_u,i_v \in \{1,\ldots,k\}$ such that $i_u \neq i_v, f^{e}_u \in E(T_{i_u})$ and $f^{e}_v \in E(T_{i_v})$. We obtain by construction that $\phi(u)=i_u$ and  $\phi(v)=i_v$, so in particular $\phi(u)\neq \phi(v)$. It follows that $\phi$ is a proper vertex-coloring of $H$, so $H$ is a positive instance of $kCOL$.

Now suppose that there exists a proper vertex-coloring $\phi:V(H) \rightarrow \{1,\ldots,k\}$ of $H$.  We next define a spanning tree factorization of $G$. First fix some $e=uv \in E(H)$. We define a spanning tree factorization $(T^{e}_1,\ldots,T^{e}_k)$ of $G^{e}$ in the following way: We let $F^{e}$ contain an edge of $E(T^{e}_i)$ for all $i \in \{1,\ldots,k\}-\phi(u)$ and we let $f^{e}$ be contained in $E(T^{e}_{\phi(u)})$. Further, we let $F^{e}_u$ contain an edge of $E(T^{e}_i)$ for all $i \in \{1,\ldots,k\}-\phi(u)$ and we let $f^{e}_u$ be contained in $E(T^{e}_{\phi(u)})$. Finally, we let $F^{e}_v$ contain an edge of $E(T^{e}_i)$ for all $i \in \{1,\ldots,k\}-\phi(v)$ and we let $f^{e}_v$ be contained in $E(T^{e}_{\phi(v)})$. As $\phi$ is a proper vertex-coloring, we have $\phi(u)\neq \phi(v)$ and hence by  Proposition \ref{long} $(a)$, we obtain that $(T^{e}_1,\ldots,T^{e}_k)$ is a spanning tree factorization of $G^{e}$. For all $i \in \{1,\ldots,k\}$, we now obtain $T_i$ from the union of $T^{e}_i$ for all $e \in E(H)$ by identifying the vertices $w_e$. By Proposition \ref{ident}, we obtain that $(T_1,\ldots,T_k)$ is a spanning tree factorization of $G$. Further, by construction, $T_i$ is rainbow with respect to $\mathcal{P}$ for all $i=1,\ldots,k$. Hence, $(T_1,\ldots,T_k)$ is a factorization of $G$ into rainbow spanning trees, so $(G,\mathcal{P})$ is a positive instance of RST$k$F.

 We hence obtain that Theorem \ref{hard1} holds for $k \geq 3$. Together with the results in Section \ref{concfac}, this finishes the proof of Theorem \ref{hard1}.

\subsection{Application to $g$-bounded spanning trees}\label{gziipj}

In this section, we prove Theorem \ref{hard2}. 
 We use Theorem \ref{hard1}.

\begin{proof}[Proof of Theorem \ref{hard2}.]
We prove that BST$k$F is NP-hard by a reduction from RST$k$F. Let $(G,\mathcal{P})$ be an instance of RST$k$F. 

We create a digraph $D$ in the following way: First, we let $V(D)$ contain the vertices in $V(G)$, a vertex $w_e$ for every $e \in E(G)$, and a set $Z_X$ of $k-1$ vertices for every $X \in \mathcal{P}$. Let $Z=\bigcup_{X \in \mathcal{P}}Z_X$. 
  Now, for every $e \in E(G)$, let $\vec{e}$ be an arbitrary orientation of $e$. For every $e \in E(G)$ with $\vec{e}=uv$, we let $A(D)$ contain $k$ parallel arcs from $u$ to $w_e$ and one arc from $v$ to $w_e$. Further, for every $X \in \mathcal{P}$, we let $A(D)$ contain an arc from every vertex in $Z_X$ to every vertex in $B_X=\{w_e|e \in X\}$.

Finally, we let $g:V(D)\rightarrow \mathbb{Z}_{\geq 0}$ be defined by $g(v)=0$ for all $v \in V(G)\cup Z$ and $g(w_e)=2$ for all $e \in E(G)$. 
\medskip


We now show that $(G,\mathcal{P})$ is a positive instance of RST$k$F if and only if $(D,g)$ is a positive instance of  BST$k$F.
\medskip

First suppose that $(G,\mathcal{P})$ is a positive instance of RST$k$F, so there is a factorization $(T_1,\ldots,T_k)$ of $G$ into rainbow spanning trees. We now create a set of spanning subgraphs $(U_1,\ldots,U_k)$ of $D$. 
First, for every $i \in \{1,\ldots,k\}$ and $e\in E(T_i)$ with $\vec{e}=uv$, we let $A(U_i)$ contain the arc $vw_e$ and we let $A(U_j)$ contain one of the arcs from $u$ to $w_e$ for all $j \in \{1,\ldots,k\}$.

Now for some $X \in \mathcal{P}$ consider the complete bipartite graph $H_X$ whose partition classes are $Z_X$ and $B_X$. Further, consider the mapping $\sigma_X:B_X\rightarrow \{1,\ldots,k\}$ where $\sigma_X(w_e)$ is the unique $i \in \{1,\ldots,k\}$ such that $e \in E(T_i)$. Observe that, as $(T_1,\ldots,T_k)$ is a factorization of $G$ into spanning trees which are rainbow with respect to $\mathcal{P}$ and $\mathcal{P}$ is $k$-uniform, for every $i \in \{1,\ldots,k\}$, there is exactly one $w_e \in B_X$ such that $\sigma_X(w_e)=i$. Hence, by Proposition \ref{compbip}, there is a function $\phi_X:E(H_X)\rightarrow \{1,\ldots,k\}$ such that $\phi_X(\delta_{H_X}(z))=\{1,\ldots,k\}$ for all $z \in Z_X$ and $\phi_X(\delta_{H_X}(w_e))=\{1,\ldots,k\}-\sigma_X(w_e)$ for all $w_e \in B_X$. Now, for every $i \in \{1,\ldots,k\}$, for every $X \in \mathcal{P}$, and for every arc $a$ from $Z_X$ to $B_X$, we add $a$ to $A(U_i)$ if $\phi_X(e)=i$ where $e$ is the edge in $E(H_X)$ corresponding to $a$. This finishes the definition of $(U_1,\ldots,U_k)$. Observe that $(U_1,\ldots,U_k)$ is a factorization of $D$ into subgraphs.

In order to see that $UG(U_i)$ is a spanning tree of $UG(D)$ for all $i \in \{1,\ldots,k\}$, first observe that all vertices in $Z$ are of degree 1 in $UG(U_i)$, all the vertices in $\{w_e|e \in E(G)-E(T_i)\}$ are of degree 1 in $UG(U_i)-Z$, and all the vertices in $\{w_e|e \in E(T_i)\}$ are of degree 2 in $UG(U_i)-Z$. Further, the graph obtained from $UG(U_i)$ by deleting the vertices in $Z \cup \{w_e|e \in E(G)-E(T_i)\}$ and then suppressing the vertices in $\{w_e|e \in E(T_i)\}$ is exactly $T_i$, which is a spanning tree of $G$. It follows that $UG(U_i)$ is a spanning tree of $UG(H)$.

For all $v \in V(G)\cup Z$, we have $d_{D}^-(v)=0$, so in particular $d_{U_i}^-(v)=0$ for all $i \in \{1,\ldots,k\}$. Now consider some $e \in E(G)$ with $\vec{e}=uv$ and $e\in X$ for some $X \in \mathcal{P}$ and some $i \in \{1,\ldots,k\}$. By construction, exactly one of the arcs from $u$ to $w_e$ enters $w_e$.  If $e \in E(T_i)$, we obtain that $vw_e \in A(U_i)$ and so $\sigma_X(w_e)=i$. We hence obtain that $\phi_X(zw_e)\neq i$ for all $z \in Z_X$ and so $A(U_i)\cap \delta_H(Z_X,w_e)=\emptyset$, so $d_{U_i}^-(w_e)=2$. Otherwise, we have $vw_e \in A(D)-A(U_i)$ and therefore $$d_{U_i}^-(w_e)=d_{U_i}(Z_x,w_e)+d_{U_i}(u,w_e)=|\{f \in \delta_{H_X}(w_e)|\phi_X(f)=i\}|+1=2.$$ Hence $(U_1,\ldots,U_k)$ has the desired properties.
\medskip

Now suppose that  $(D,g)$ is a positive instance of  BST$k$F, so $D$ contains a factorization into subgraphs $(U_1,\ldots,U_k)$ such that $UG(U_i)$ is a spanning tree of $UG(H)$ and $d_{U_i}^-(v)\leq g(v)$ for all $i \in \{1,\ldots,k\}$ and $v \in V(D)$. We define a set of spanning subgraphs $(T_1,\ldots,T_k)$ of $G$ in the following way. For every $i \in \{1,\ldots,k\}$ and every $e \in E(G)$ with $\vec{e}=uv$ such that $vw_e \in A(U_i)$, we let $E(T_i)$ contain $e$.

It is easy to see that $(T_1,\ldots,T_k)$ is a factorization of $G$. We now fix some $i \in\{1,\ldots,k\}$ and prove that $T_i$ is a spanning tree of $G$. First observe that $d_{UG(U_i)}(z)=1$ for all $z \in Z$ because $d_{UG(D)}(z)=k$ and $UG(U_j)$ is a spanning tree of $UG(D)$ for all $j \in \{1,\ldots,k\}$. Next observe that all the vertices in $\{w_e|e \in E(G)-E(T_i)\}$ are of degree 1 in $UG(U_i)-Z$ and all the vertices in $\{w_e|e \in E(T_i)\}$ are of degree 2 in $UG(U_i)-Z$ because being a spanning tree of $UG(D)$, $UG(U_j)$ has to contain exactly one of the $k$ edges corresponding to the parallel arcs from $u$ to $w_e$ for all $j \in \{1,\ldots,k\}$ and $e \in E(G)$ with $\vec{e}=uv$. Further, the graph obtained from $UG(U_i)$ by deleting the vertices in $Z \cup \{w_e: e \in E(G)-E(T_i)\}$ and then suppressing the vertices in $\bigcup_{e \in E(T_i)}w_e$ is exactly $T_i$. As $UG(U_i)$ is a spanning tree of $UG(D)$, we obtain that $T_i$ is a spanning tree of $G$. 

We still need to prove that $T_i$ is rainbow with respect to $\mathcal{P}$ for all $i \in \{1,\ldots,k\}$. Fix some $i \in \{1,\ldots,k\}$ and $X \in \mathcal{P}$.
 Every $z \in Z_X$ is incident to exactly $k$ edges in $UG(D)$. As $UG(U_j)$ is a spanning tree of $UG(D)$ for every $j \in \{1,\ldots,k\}$, we obtain that every $z \in Z_X$ is incident to exactly one edge in $UG(U_i)$. As $|Z_X|=k-1$ and $|X|=k-1$, there is exactly one $e \in X$ such that $U_i$ does not contain an arc from $Z_X$ to $w_e$. This yields $|E(T_i)\cap X|=|\{e\}|=1$.
 Hence $(G,\mathcal{P})$ is a positive instance of RST$k$F.
\medskip

We obtain that $(G,\mathcal{P})$ is a positive instance of RST$k$F if and only if $(D,g)$ is a positive instance of BST$k$F. As $k$ is a fixed number, we further obtain that the size of $(D,g)$ is polynomial in the size of $(G,\mathcal{P})$. Hence the proof is finished by Theorem \ref{hard1}.
\end{proof}

\section{Covering by rainbow bases}\label{cover}

The objective of this section is to prove Theorems \ref{k41} to \ref{doublecover}. In Section \ref{ind}, we give a preliminary result on covering matroids with a given 2-uniform partition of their elements by a collection of rainbow independent sets. While this result may also be of independent interest, its main purpose in this article is to be used to prove Theorems \ref{k41} and \ref{k412} in Section \ref{aumoins3}. Finally, we prove Theorem \ref{doublecover} in Section \ref{k2}.
\subsection{Covering by independent sets}\label{ind}

This section contains a simple result on covering the elements of matroids by rainbow bases. 

We need the following preliminary result. It can easily be concluded from Edmonds' matroid intersection theorem \cite{e4} and also directly follows from Lemma \ref{main} in Section \ref{k2}. We leave the proof as an exercise.

\begin{Proposition}\label{dfqdqw}
Let $M$ be a 2-base matroid and $\mathcal{P}$ be a 2-uniform partition of $E(M)$. Then $M$ contains a rainbow basis.
\end{Proposition}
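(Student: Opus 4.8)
The plan is to prove Proposition \ref{dfqdqw} by exhibiting a rainbow basis directly, using the fact that $M$ is a $2$-base matroid together with matroid union / matroid intersection machinery, rather than invoking the more technical Lemma \ref{main}. Let $r=r_M(E(M))$; since $M$ is a $2$-base matroid, $|E(M)|=2r$ and there is a partition $E(M)=B_1\cup B_2$ into two bases. The partition $\mathcal{P}$ is $2$-uniform, so it consists of exactly $r$ pairs, and a rainbow set is just a set that contains at most one element of each pair. The key observation is that a rainbow basis is precisely a common basis of $M$ and the unitary partition matroid $N$ induced by $\mathcal{P}$ (the direct sum of $r$ rank-$1$ uniform matroids, one on each pair). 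Both $M$ and $N$ have rank $r$, so by Edmonds' matroid intersection theorem it suffices to verify the min-max condition: for every $S\subseteq E(M)$, $r_M(S)+r_N(E(M)\setminus S)\geq r$.

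The main step is this rank inequality. Fix $S\subseteq E(M)$. For $N$, note $r_N(E(M)\setminus S)$ equals the number of pairs of $\mathcal{P}$ that meet $E(M)\setminus S$, i.e. $r$ minus the number of pairs entirely contained in $S$; call that latter number $t$. So we must show $r_M(S)\geq t$. Now here is where the $2$-base structure enters: the $t$ pairs contained in $S$ are disjoint two-element subsets of $E(M)$, and each such pair, being a circuit-free or near-circuit-free configuration, satisfies a useful property. Concretely, I would argue that $S$ contains a spanning set of $t$ pairwise disjoint pairs, and since $M$ is the union of two bases $B_1,B_2$, by a pigeonhole/exchange argument each of these $t$ pairs contributes at least one independent element: more precisely, one can two-color the $2t$ elements lying in these pairs so that each color class lies in one of $B_1,B_2$... but that is not immediate, so instead I would use the matroid union theorem. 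The matroid $M$ being a $2$-base matroid means $2M$ (the matroid whose independent sets are unions of two independent sets of $M$) has rank $2r$ on $E(M)$, i.e. $E(M)$ itself is independent in $2M$; equivalently, $2r_M(X)\geq |X|$... no: by matroid union, $r_{2M}(X)=\min_{Y\subseteq X}\{|X\setminus Y|+2r_M(Y)\}$, and $E(M)$ independent in $2M$ forces $2r_M(Y)\geq |Y|$ for all $Y$, i.e. $r_M(Y)\geq |Y|/2$ for all $Y\subseteq E(M)$. Apply this to $Y=$ the union of the $t$ pairs contained in $S$: then $r_M(S)\geq r_M(Y)\geq |Y|/2 = t$, which is exactly what we needed.

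So the skeleton is: (1) identify rainbow bases of $M$ with common bases of $M$ and the unitary partition matroid $N$ of $\mathcal{P}$, both of rank $r=\tfrac12|E(M)|$; (2) recall that $M$ being a $2$-base matroid is equivalent, via the matroid union theorem, to $r_M(Y)\geq |Y|/2$ for all $Y\subseteq E(M)$; (3) for arbitrary $S$, compute $r_N(E(M)\setminus S)=r-t$ where $t$ is the number of $\mathcal{P}$-pairs inside $S$, and apply (2) with $Y$ the union of those $t$ pairs to get $r_M(S)\geq t$; (4) conclude $r_M(S)+r_N(E(M)\setminus S)\geq r$ for all $S$, so by matroid intersection $M$ and $N$ share a common basis, which is the desired rainbow basis. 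The one place that needs a little care is step (2): one should double-check the direction of the matroid-union equivalence, namely that "$E(M)$ partitions into $2$ bases of $M$" indeed implies (and is in fact equivalent to) "$r_M(Y)\geq\lceil|Y|/2\rceil$ for all $Y$ together with $|E(M)|=2r_M(E(M))$". This is routine from Nash-Williams/Edmonds matroid union, so the proof is genuinely short — which is why the authors are comfortable leaving it as an exercise; I expect no serious obstacle beyond getting this standard equivalence cited correctly.
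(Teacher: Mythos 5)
Your proof is correct and follows exactly the route the paper itself points to (the paper leaves this proposition as an exercise, remarking that it can be concluded from Edmonds' matroid intersection theorem or from Lemma \ref{main}): you verify the intersection min--max condition for $M$ and the unitary partition matroid of $\mathcal{P}$, and the key inequality $r_M(S)\geq t$ is established correctly. The only simplification worth noting is that your step (2) does not need the matroid union theorem: $r_M(Y)\geq \tfrac{1}{2}|Y|$ follows immediately from the two disjoint bases, since $r_M(Y)\geq\max\{|B_1\cap Y|,|B_2\cap Y|\}\geq \tfrac{1}{2}|Y|$, exactly as in the paper's Proposition \ref{gu2}.
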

We are now ready to prove the main result of this section which is a consequence of Proposition \ref{dfqdqw}.

\begin{Lemma}\label{fzu}
Let $M$ be a 2-base matroid and $\mathcal{P}$ a 2-uniform partition of $E(M)$. Then $M$ can be covered by $3$ rainbow independent sets one of which is a basis.
\end{Lemma}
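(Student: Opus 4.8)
We are given a 2-base matroid $M$ with a factorization $(B_1, B_2)$ into two bases, and a 2-uniform partition $\mathcal{P}$ of $E(M)$. We want to cover $E(M)$ by three rainbow independent sets, one of which is a basis. The natural starting point is Proposition \ref{dfqdqw}, which hands us a rainbow basis $B$. Since $B$ is rainbow, it picks at most one element from each pair $X \in \mathcal{P}$; let $R = E(M) \setminus B$ be the remaining elements. Because $\mathcal{P}$ is 2-uniform and $B$ hits each pair in at most one element, $R$ hits each pair in at least one element, and $R$ is the union of at most $r_M(E(M))$ "leftover" elements (one per pair met by $B$) plus the full pairs missed entirely by $B$. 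So the task reduces to covering $R$ by two rainbow independent sets.

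**Covering the complement of the rainbow basis.** The plan is to show $R$ can be split into two rainbow independent sets. Since $M$ is a 2-base matroid, $E(M) = B_1 \cup B_2$ with $B_1, B_2$ disjoint bases, so $R = (R \cap B_1) \cup (R \cap B_2)$, and each $R \cap B_i$ is automatically independent in $M$ (a subset of a basis). The only issue is that $R \cap B_1$ and $R \cap B_2$ need not be rainbow: a pair $X = \{x, y\} \in \mathcal{P}$ with both $x, y \in R$ might have both elements in $R \cap B_1$ (or both in $R \cap B_2$). First I would record which pairs are "bad" for this partition — those entirely contained in one $R\cap B_i$ — and observe that for each such pair $\{x,y\} \subseteq R \cap B_1$ (say), we have $B \cap \{x,y\} = \emptyset$, so $B$ missed this pair entirely. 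The fix is to reassign one element of each bad pair: move $x$ from the $B_1$-side to the $B_2$-side. After moving, the $B_2$-side gains $x$; I must check it stays independent. Here I would use that $B$ is a basis and $x \notin B$: in fact $B + x$ contains a unique circuit, and a cleaner route is to argue directly that we can redistribute $R$ using a matroid-union / basis-exchange argument, since $R \subseteq E(M)$ and $R$ is covered by two independent sets to begin with (namely $R\cap B_1$, $R\cap B_2$), so by the matroid union theorem any partition of $R$ into two independent sets can be adjusted; what we additionally need is one that is also rainbow.

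**The main obstacle and how to handle it.** The real content is showing that $R$ admits a partition into two rainbow independent sets. I would phrase this as: the pairs of $\mathcal{P}$ that lie entirely inside $R$ form a graph-like structure, and we want a partition of $R$ into two independent sets separating every such pair. Note each such pair $\{x,y\}$ satisfies $\{x,y\} \cap B = \emptyset$; since $B$ is a basis, $\{x,y\}$ is contained in $E(M)\setminus B$ which, as a complement of a basis in a matroid covered by two bases, is itself a basis — call it $\bar B$. So all fully-in-$R$ pairs are subsets of the single basis $\bar B$, and $R = \bar B \cup L$ where $L$ is the set of singleton leftovers (elements $z$ whose partner is in $B$). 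The pairs to separate are a perfect matching $\mathcal{M}$ on a subset of $\bar B$. Now I want to 2-color $\bar B$ so each $\mathcal{M}$-pair is bichromatic and each color class is independent; since $\mathcal{M}$ is a matching, any proper 2-coloring of the matching graph works set-theoretically, and both classes are subsets of $\bar B$ hence independent. Then distribute $L$ arbitrarily between the two classes (still independent as subsets of $\bar B$ plus extra elements — wait, $L \not\subseteq \bar B$ in general), so more carefully: put each $z \in L$ into whichever class keeps things rainbow (its partner is in $B$, not in either class, so any placement is rainbow) and independent; since $\bar B$ is a basis, $\bar B \cup L = R$ and I can just take the two classes to be $\bar B$ split by the matching coloring, together with $L$ split so that one class union the other is $R$ — but independence of $\text{(color class)} \cup \text{(part of } L)$ needs checking. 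The honest fix: apply matroid union to the contraction $M/(\text{one chosen element per } \mathcal{M}\text{-pair})$ or simply invoke that $R$ is spanned and $2$-coverable, then use an exchange argument to rainbow-ize. I expect the crux — and the step I would spend the most care on — is verifying independence after the rainbow reassignment; the matroid-union theorem (or Edmonds' intersection theorem, already invoked for Proposition \ref{dfqdqw}) should make this routine once the combinatorial structure above is in place. Finally, output the three rainbow independent sets $B$ (a basis), and the two halves of $R$.
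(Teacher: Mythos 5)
Your opening move is exactly the paper's: take the rainbow basis $B$ from Proposition \ref{dfqdqw}, set $R=E(M)\setminus B$, and cover $R$ by $R\cap B_1$ and $R\cap B_2$. But there is a genuine gap where you diagnose a ``main obstacle'': the obstacle does not exist, and the argument you sketch to overcome it does not go through. The missing observation is a one-line count. Since $M$ is a 2-base matroid, $|B|=r_M(E(M))=\tfrac12|E(M)|$, and since $\mathcal{P}$ is 2-uniform it has exactly $\tfrac12|E(M)|$ classes; as $B$ is rainbow, $|B\cap Y|\leq 1$ for each class $Y$, and summing over the classes forces $|B\cap Y|=1$ for \emph{every} $Y\in\mathcal{P}$. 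Hence $|R\cap Y|=1$ for every class, so no pair of $\mathcal{P}$ lies entirely in $R$, every subset of $R$ is automatically rainbow, and $R\cap B_1$, $R\cap B_2$ (independent as subsets of bases) finish the proof immediately. This is precisely how the paper concludes.

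Instead, you assert that some pairs may be wholly contained in $R\cap B_i$ and try to repair this by reassignment. Along the way you claim that $E(M)\setminus B$, ``as a complement of a basis in a matroid covered by two bases, is itself a basis.'' That is false: in the cycle matroid of $K_4$ (a 2-base matroid), the complement of the star at a vertex is a triangle, i.e.\ a circuit, not a basis. Your subsequent matching-and-recolouring construction rests on that false claim, and the independence of the classes after redistributing the ``leftover'' elements $L$ is never verified; the closing appeal to matroid union or an unspecified exchange argument is not a proof. So the write-up as it stands is incomplete, even though the statement you are trying to prove in that passage (that $R$ splits into two rainbow independent sets) is true for the elementary counting reason above.
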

\begin{proof}
By assumption, $E(M)$ can be partitioned into two bases $B_1$ and $B_2$. 
 By Proposition \ref{dfqdqw}, $M$ contains a basis $X_1$ which is rainbow. Consider $X_2=(E(M)-X_1)\cap B_1$ and $X_3=(E(M)-X_1)\cap B_2$. Observe that $X_2$ and $X_3$ are independent in $M$. Further, for any $Y \in \mathcal{P
}$ and $i \in \{2,3\}$, we have $$| X_i \cap Y|\leq | (E(M)-X_1) \cap Y|=1.$$ It follows that  $X_1,X_2$ and $X_3$ are rainbow.
\end{proof}

 We wish to remark here that a rather technical degeneracy argument can be used to show that every $3$-multiple tree can be covered by 4 rainbow independent sets. A similar result for general matroids has been obtained by Aharoni, Berger and Ziv in \cite{ABZ}. Together with the results in Section \ref{aumoins3}, we obtain that the constant in Theorems \ref{k41} and \ref{k412} can be improved by 1 if $k$ is odd. Since the improvement is rather marginal and we wish to keep the proof self-contained, we omit it.







\subsection{Covering by rainbow bases for $k\geq 3$}\label{aumoins3}

In this section, we prove Theorems \ref{k41} and \ref{k412}.
We need the following two results which are basic matroid theory and hence given without proof.
\begin{Proposition}\label{trivmat1}
Let $M$ be a matroid, $B$ a basis of $M$ and $X \subseteq E(M)$. Then $B-X$ contains a basis of $M/X$.
\end{Proposition}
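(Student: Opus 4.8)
The final statement is \textbf{Proposition \ref{trivmat1}}: if $M$ is a matroid, $B$ a basis of $M$, and $X \subseteq E(M)$, then $B - X$ contains a basis of $M/X$.

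\medskip

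The plan is to argue directly from the rank function of the contraction. Recall that $r_{M/X}(S) = r_M(S \cup X) - r_M(X)$ for every $S \subseteq E(M) - X$, and that a basis of $M/X$ is a maximal independent set, equivalently a set $S \subseteq E(M)-X$ with $|S| = r_{M/X}(E(M)-X) = r_M(E(M)) - r_M(X) = r_M(M) - r_M(X)$ that is independent in $M/X$. So it suffices to exhibit an independent set of $M/X$ inside $B - X$ of this size; one could then invoke the fact that every independent set extends to a basis, but in fact the set $B-X$ itself will turn out to have exactly the right size and independence.

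\medskip

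First I would show $B - X$ is independent in $M/X$. A standard characterization: $S$ is independent in $M/X$ if and only if there is an independent set $I_X$ of $M|_X$ (a basis of $M$ restricted to $X$) with $S \cup I_X$ independent in $M$. Since $B$ is independent in $M$ and $B \cap X \subseteq X$ is independent in $M$, pick any basis $I_X$ of $M$ restricted to the set $X$ containing $B \cap X$; then $(B - X) \cup I_X$ contains $(B-X) \cup (B \cap X) = B$. This is not quite enough since $I_X$ may be strictly larger than $B \cap X$ and the union with $B - X$ need not be independent. The cleaner route is to use submodularity of $r_M$ directly: for $S = B - X$ we compute
\[
r_{M/X}(B - X) = r_M((B-X) \cup X) - r_M(X) \geq r_M(B) - r_M(B \cap X) = |B| - |B \cap X| = |B - X|,
\]
where the inequality is submodularity applied to $B$ and $X$ (namely $r_M(B \cup X) + r_M(B \cap X) \geq r_M(B) + r_M(X)$, rearranged, together with $r_M((B-X)\cup X) = r_M(B \cup X)$), and the equalities use that $B$ and hence $B \cap X$ are independent. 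Since always $r_{M/X}(B-X) \leq |B-X|$, equality holds, so $B - X$ is independent in $M/X$.

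\medskip

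Then I would check $|B - X| = r(M/X)$, so that $B - X$ is in fact already a basis of $M/X$ (which is even stronger than "contains a basis", and implies it). We have $r(M/X) = r_M(E(M)) - r_M(X) = |B| - r_M(X)$. On the other hand $|B - X| = |B| - |B \cap X|$, so it remains to see $|B \cap X| = r_M(X)$, i.e.\ that $B \cap X$ is a basis of $M$ restricted to $X$; this is the one genuinely nontrivial point and the likely main obstacle, but it is a well-known fact: for any basis $B$ of a matroid and any subset $X$, $B \cap X$ is a maximal independent subset of $X$ (if some $x \in X - B$ could be added to $B \cap X$ keeping independence, then since $B$ spans $x$ there is a circuit in $B \cup x$ through $x$, and an exchange argument confines the relevant elements, contradicting maximality within $B$). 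Granting this, $|B \cap X| = r_M(X)$, hence $|B - X| = |B| - r_M(X) = r(M/X)$, and combined with the previous paragraph $B - X$ is a basis of $M/X$; a fortiori it contains one. As an alternative that sidesteps the exchange argument, one may instead only prove the inequality $|B \cap X| \le r_M(X)$ (trivial, since $B \cap X$ is independent), deduce $|B-X| \ge r(M/X)$, and combine with the independence of $B-X$ in $M/X$ to again force equality — this is the route I would actually write up, since it is fully self-contained from submodularity alone.
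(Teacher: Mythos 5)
Your argument has a genuine error: you apply submodularity in the wrong direction. Submodularity says $r_M(B)+r_M(X)\geq r_M(B\cup X)+r_M(B\cap X)$, not the reverse, so your chain $r_{M/X}(B-X)\geq r_M(B)-r_M(B\cap X)=|B-X|$ does not follow; in fact the conclusion you draw from it, that $B-X$ is independent in $M/X$, is false in general. Likewise the ``well-known fact'' you invoke in the second step, that $B\cap X$ is a maximal independent subset of $X$ for every basis $B$, is simply not true: in the cycle matroid of a triangle with edges $a,b,c$, take $B=\{a,b\}$ and $X=\{c\}$; then $B\cap X=\emptyset$ while $r_M(X)=1$, and $B-X=\{a,b\}$ is a pair of parallel elements in $M/X$, hence dependent there. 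So the strengthened statement you aim for ($B-X$ is itself a basis of $M/X$) is false, and your ``alternative route'' at the end fails for the same reason, since it still relies on the independence of $B-X$ in $M/X$.

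The proposition itself only claims that $B-X$ \emph{contains} a basis of $M/X$, and for that it suffices that $B-X$ is spanning in $M/X$; this is a one-line rank computation which you essentially had in hand: $r_{M/X}(B-X)=r_M((B-X)\cup X)-r_M(X)=r_M(B\cup X)-r_M(X)=r_M(E(M))-r_M(X)=r_{M/X}(E(M)-X)$, where the middle equality holds because $B\cup X\supseteq B$ spans $M$. A spanning set of $M/X$ contains a basis of $M/X$, and no submodularity or exchange argument is needed. (The paper states this proposition without proof as basic matroid theory, so there is no written argument to compare against, but the correct proof is the rank computation above.)
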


\begin{Proposition}\label{trivmat2}
Let $M$ be a matroid, $X$ a subset of $E(M)$ which is independent in $M$ and $B$ a basis of $M/X$. Then $B\cup X$ is a basis of $M$.
\end{Proposition}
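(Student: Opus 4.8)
The plan is to unwind the definition of the contraction $M/X$ and then compare cardinalities. Recall that the ground set of $M/X$ is $E(M)\setminus X$, and that its rank function satisfies $r_{M/X}(Y)=r_M(Y\cup X)-r_M(X)$ for every $Y\subseteq E(M)\setminus X$. Since $X$ is independent in $M$, we have $r_M(X)=|X|$, so this simplifies to $r_{M/X}(Y)=r_M(Y\cup X)-|X|$.

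First I would record two consequences of the hypothesis that $B$ is a basis of $M/X$. On the one hand, $B\subseteq E(M)\setminus X$, so $B$ and $X$ are disjoint. On the other hand, applying the rank formula with $Y=E(M)\setminus X$ gives $r_{M/X}(E(M)\setminus X)=r_M(E(M))-|X|$, and since a basis of a matroid has size equal to the rank of that matroid, it follows that $|B|=r_M(E(M))-|X|$.

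Next I would verify that $B\cup X$ is independent in $M$. As $B$ is independent in $M/X$, we have $r_{M/X}(B)=|B|$, i.e. $r_M(B\cup X)-|X|=|B|$. Using that $B$ and $X$ are disjoint, this rearranges to $r_M(B\cup X)=|B|+|X|=|B\cup X|$, so $B\cup X$ is independent in $M$. Combining this with the cardinality computation of the previous paragraph, $B\cup X$ is an independent set of $M$ of size $|B|+|X|=r_M(E(M))$, and hence a basis of $M$.

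There is essentially no obstacle here: the only point requiring a little care is the bookkeeping around the ground set of $M/X$ and the basic fact that contracting an independent set decreases the rank by exactly its cardinality. Note also that this argument does not need Proposition \ref{trivmat1}; it is the "converse" statement and is proved directly from the rank formula for contraction.
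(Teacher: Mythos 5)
Your argument is correct: the rank formula $r_{M/X}(Y)=r_M(Y\cup X)-r_M(X)$ together with $r_M(X)=|X|$ (independence of $X$) gives both the independence of $B\cup X$ in $M$ and the cardinality count $|B\cup X|=r_M(E(M))$, which is exactly what is needed. The paper itself states Proposition \ref{trivmat2} without proof, calling it basic matroid theory, so there is no proof in the paper to compare against; your write-up simply supplies the standard contraction argument that the authors chose to omit, and it does so correctly and self-containedly (in particular, you are right that Proposition \ref{trivmat1} is not needed).
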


The following two results are the key ingredient for relating the problem of covering by rainbow independent sets to the problem of covering by rainbow bases. We first give the following result that holds for matroids containing at least 4 disjoint bases.
\begin{Lemma}\label{foresttree}
Let $M$ be a matroid that contains 4 disjoint bases, $\mathcal{P}$ a 2-uniform partition of $E(M)$ and $X \subseteq E(M)$ a set which is independent in $M$ and rainbow. Then there are two rainbow bases $B_1,B_2$ of $M$ such that $X\subseteq B_1 \cup B_2$.
\end{Lemma}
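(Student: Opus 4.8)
\textbf{Proof plan for Lemma \ref{foresttree}.}

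The plan is to reduce the problem of finding two rainbow bases covering $X$ to an application of Proposition \ref{dfqdqw} in a suitable minor of $M$. Since $X$ is independent and rainbow, I would like to split $X$ into two parts $X_1, X_2$, put $X_i$ into $B_i$, and then extend each $X_i$ to a full basis inside the complementary minor $M/X_i$ while preserving the rainbow property. The key point is to arrange the split so that the classes of $\mathcal{P}$ that are already hit by $X$ cause no further trouble. Concretely, I would first discard the elements of $\bigcup\mathcal{P}$ that are not relevant: let $\mathcal{P}_X \subseteq \mathcal{P}$ be the classes meeting $X$ (each in exactly one element, since $X$ is rainbow), and let $\mathcal{P}'$ be the restriction of $\mathcal{P}$ to the ground set $E(M) - X$. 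Then $\mathcal{P}'$ is still a $2$-bounded partition, but not necessarily $2$-uniform; a class $Y \in \mathcal{P}'$ has size $1$ exactly when its original class in $\mathcal{P}$ was counted in $\mathcal{P}_X$ or was already a singleton — but the hypothesis says $\mathcal{P}$ is $2$-uniform, so size-$1$ classes of $\mathcal{P}'$ are precisely those whose partner element lies in $X$.

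The main technical step is to produce, for a well-chosen partition $X = X_1 \cup X_2$, a rainbow basis of the minor $M/X_i$ with respect to $\mathcal{P}'$. Here I would exploit that $M$ has $4$ disjoint bases. By Proposition \ref{trivmat1}, contracting the independent set $X$ leaves a matroid $M/X$ that still has (at least) $3$ disjoint bases — each original basis $B$ contains a basis of $M/X$, and since $X$ meets each of the $4$ disjoint bases in at most... — actually I must be more careful: it is cleaner to contract one part at a time. Contract $X_1$: $M/X_1$ still has $4 - |X_1|$... no. The right statement is: if $M$ has $4$ disjoint bases and $X_1$ is independent, then $M/X_1$ has at least $2$ disjoint bases, because at most two of the four disjoint bases can fail to remain spanning after... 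Let me instead use the robust route: $M/X_1$ contains $4$ "almost-bases" and in particular is a union of at most... I would argue that $M/X_1$ is a $2$-base matroid \emph{after} possibly adding parallel elements, or more simply: I pick $X_1$ to be a \emph{rainbow transversal of the size-$2$ classes that $X$ should be spread across}, and then show $M/X_1$ is a $2$-base matroid whose induced partition $\mathcal{P}'$ restricted appropriately is $2$-uniform, so Proposition \ref{dfqdqw} applies and yields a rainbow basis $B_1'$ of $M/X_1$; then $B_1 := B_1' \cup X_1$ is a basis of $M$ by Proposition \ref{trivmat2}, and it is rainbow because $B_1'$ avoids the classes hit by $X_1$ (those became singletons in the relevant partition) and is itself rainbow for the remaining $2$-element classes. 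The symmetric argument with $X_2 = X \setminus X_1$ gives $B_2$, and $X = X_1 \cup X_2 \subseteq B_1 \cup B_2$.

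The crux — and the step I expect to be the main obstacle — is verifying that after contracting $X_i$ the remaining matroid still has two disjoint bases so that Proposition \ref{dfqdqw} is applicable, \emph{and simultaneously} that the partition induced on $E(M/X_i)$ can be taken $2$-uniform (by deleting the now-useless singleton classes, which is harmless for the rainbow condition). For the two-disjoint-bases claim I would use that $M$ has $4$ disjoint bases $C_1, C_2, C_3, C_4$: since $X_i$ is independent, $X_i$ meets at most... actually each $C_j$ is a basis so $X_i \subseteq$ spans, but $X_i$ may intersect several $C_j$'s. The clean bound is: $C_j - X_i$ contains a basis of $M/X_i$ (Proposition \ref{trivmat1}), and among $C_1, \dots, C_4$ the sets $C_j - X_i$ are still pairwise disjoint; I only need \emph{two} of them to give bases of $M/X_i$, which requires two indices $j$ with $C_j - X_i$ spanning in $M/X_i$ — and in fact \emph{every} $C_j - X_i$ contains a basis of $M/X_i$ by Proposition \ref{trivmat1}, so $M/X_i$ has $4 \ge 2$ disjoint bases outright. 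Hence $M/X_i$ is a $2$-base matroid (using any two of these disjoint bases), Proposition \ref{dfqdqw} applies to $(M/X_i, \mathcal{P}'')$ where $\mathcal{P}''$ is obtained from $\mathcal{P}'$ by dropping singleton classes, and the construction goes through. The only remaining care is bookkeeping: a class $Y \in \mathcal{P}$ with $Y \cap X = \emptyset$ must be entirely contained in $E(M/X_1) \cap E(M/X_2) = E(M) - X$, so it appears as a genuine $2$-element class in both minors, and $B_1', B_2'$ each pick at most one of its elements — giving the global rainbow property for $B_1, B_2$.
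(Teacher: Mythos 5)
There is a genuine gap, and it sits exactly at the step you yourself flag as the crux. After contracting $X_i$, the partners of the elements of $X_i$ are still present in the ground set $E(M)-X_i$; in the induced partition they form singleton classes, and singleton classes impose \emph{no} rainbow constraint. So when you apply Proposition \ref{dfqdqw} (after "dropping singleton classes", as you propose), nothing prevents the resulting basis $B_i'$ from containing the partner $\bar{x}$ of some $x\in X_i$, in which case $B_i=B_i'\cup X_i$ contains a full class of $\mathcal{P}$ and is not rainbow. Your stated justification — "$B_1'$ avoids the classes hit by $X_1$ (those became singletons in the relevant partition)" — is backwards: becoming a singleton removes the constraint rather than enforcing avoidance. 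Relatedly, $M/X_i$ is not literally a $2$-base matroid (its ground set is not the union of two bases), so Proposition \ref{dfqdqw} can only be applied after restricting to the union of two disjoint bases of $M/X_i$; that restriction is fixable bookkeeping, but it is precisely where the partner problem must be solved, and your proposal never chooses the split $X=X_1\cup X_2$ or the restriction concretely enough to do so. A telling symptom: your argument never uses the hypothesis of four disjoint bases in an essential way (two disjoint bases of $M$ already give two disjoint bases of $M/X_i$ via Proposition \ref{trivmat1}), yet the paper points out that the lemma fails for matroids with fewer than four disjoint bases, so any proof that works under that weaker hypothesis must be wrong.

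The paper's proof resolves exactly this difficulty, and the mechanism is worth internalizing. Fix four disjoint bases $C_1,\ldots,C_4$, let $\bar{X}$ be the set of partners of elements of $X$, and define $X_1$ (resp.\ $X_2$) to be those elements of $X$ whose partner lies outside $C_1\cup C_2$ (resp.\ outside $C_3\cup C_4$); disjointness of the four bases guarantees $X_1\cup X_2=X$. Then one works inside the \emph{restriction} $M|(X_1\cup C_1\cup C_2)$: contract $X_1$, restrict to two disjoint bases coming from $C_1,C_2$ via Proposition \ref{trivmat1}, apply Proposition \ref{dfqdqw}, and add $X_1$ back by Proposition \ref{trivmat2}. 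The rainbow property of $B_1'\cup X_1$ is now automatic for the problematic classes because the partners of $X_1$ were excluded from the ground set from the start — they simply cannot appear in $B_1'$. This choice of split driven by where the partners sit relative to the two pairs of bases is the idea your proposal is missing.
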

\begin{proof}
For all $\{x,y\}\in \mathcal{P}$, we say that $x$ and $y$ are {\it partners} in $\mathcal{P}$. Let $\bar{X}$ be the set of elements in $E(M)$ whose partners in $\mathcal{P}$ are in $X$ and let $\{C_1,\ldots,C_4\}$ be a set of disjoint bases of $M$. Now consider $\bar{X}_1=\bar{X}-(C_1\cup C_2)$ and $\bar{X}_2=\bar{X}-(C_3\cup C_4)$ and for $i=1,2$, let $X_i$ be the set of elements of $X$ whose partners are in $\bar{X}_i$. Finally consider the matroids $M_1=M|(X_1 \cup C_1 \cup C_2)$ and $M_2=M|(X_2 \cup C_3 \cup C_4)$.

\begin{Claim}
For $i=1,2$, $M_i$ has a rainbow basis $B_i$ such that $X_i \subseteq B_i$.
\end{Claim}
\begin{proof} By symmetry, it suffices to prove the statement for $i=1$. 
Consider the matroid $M_1'$ which is obtained from $M_1$ by contracting $X_1$. As $C_1$ and $C_2$ are bases of $M_1$,  we obtain by Proposition \ref{trivmat1} that $C_j$ contains a basis $C'_j$ of $M_1'$ for $j=1,2$. Let $M_1''=M_1'|(C_1' \cup C_2')$ and observe that $M_1''$ is a 2-base matroid. It hence follows from Proposition \ref{dfqdqw} that $M_1''$ has a rainbow basis $B_1'$. As $M_1''$ is a restriction of $M_1'$, we obtain that $B_1'$ is also a basis of $M_1'$. As $X_1$ is independent in $M$, we obtain that $B'_1\cup X_1$ is a basis of $M_1$ by Proposition \ref{trivmat2}. As $X_1$ is rainbow  by assumption, none of the partners of the edges of $X_1$ in $\mathcal{P}$ are in $E(M_1)$ and $B_1'$ is rainbow, it follows that $B'_1\cup X_1$ is rainbow.
\end{proof}
We obtain $X=X_1 \cup X_2\subseteq B_1 \cup B_2$. As $E(M_i)$ contains a basis of $M$ for $i=1,2$, we obtain that $B_1$ and $B_2$ are also bases of $M$. This finishes the proof.
\end{proof}

While Lemma \ref{foresttree} does not hold for matroids containing fewer than 4 bases, we have the following slightly weaker result for matroids containing 3 disjoint bases. It can be proven analogously to Lemma \ref{foresttree}.
\begin{Lemma}\label{foresttree2}
Let $M$ be a matroid that contains 3 disjoint bases, $\mathcal{P}$ a 2-uniform partition of $E(M)$ and $X \subseteq E(M)$ a set which is independent in $M$ and rainbow. Then there are three rainbow bases $B_1,B_2,B_3$ of $M$ such that $X\subseteq B_1 \cup B_2\cup B_3$.
\end{Lemma}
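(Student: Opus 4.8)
The plan is to mimic the proof of Lemma \ref{foresttree} almost verbatim, replacing the two-way split of $\bar X$ into three parts and invoking Proposition \ref{dfqdqw} on the resulting $2$-base restrictions. Concretely, I would again call $x$ and $y$ \emph{partners} when $\{x,y\}\in\mathcal{P}$, let $\bar X$ be the set of partners of elements of $X$, and fix disjoint bases $C_1,C_2,C_3$ of $M$. The key difference is that with only three bases available I cannot afford to "hide" the partners of each $X_i$ inside two bases other than the one I will use; instead I would partition $\bar X$ into three pieces $\bar X_1,\bar X_2,\bar X_3$ where, for each $i$, $\bar X_i$ avoids $C_i$ (for instance take $\bar X_1 = \bar X\cap C_2$, $\bar X_2 = \bar X\cap C_3$, $\bar X_3 = \bar X - (C_2\cup C_3) \subseteq C_1$; or more symmetrically just let $\bar X_i$ be the elements of $\bar X$ lying in $C_{i+1}$ with indices mod $3$, noting $\bigcup_i C_i = E(M)$ so these cover $\bar X$). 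Let $X_i$ be the elements of $X$ whose partners lie in $\bar X_i$, so $X = X_1\cup X_2\cup X_3$.

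Next I would form, for each $i$, the restriction $M_i = M|(X_i\cup C_i)$. Wait — a single basis $C_i$ does not make $M_i$ a $2$-base matroid, so I need two disjoint bases inside each $M_i$. The correct setup, paralleling Lemma \ref{foresttree}, is to pair things up differently: I want $M_i$ to contain two of the three bases $C_1,C_2,C_3$ while also containing $X_i$ but \emph{not} containing any partner of $X_i$. Since each $X_i$ has its partners confined to one $C_j$, I can build $M_i$ from the \emph{other two} bases. Thus with $\bar X_i \subseteq C_{\pi(i)}$ for a suitable assignment $\pi:\{1,2,3\}\to\{1,2,3\}$, set $M_i = M|(X_i\cup (C_1\cup C_2\cup C_3)\setminus C_{\pi(i)})$; this restriction contains the two disjoint bases $\{C_j : j\neq \pi(i)\}$, contains $X_i$, and by construction contains no partner of any element of $X_i$. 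For this to work I need the three sets $\bar X_1,\bar X_2,\bar X_3$ to be placeable in distinct bases, i.e.\ $\pi$ should be a bijection; but since $\bar X\subseteq C_1\cup C_2\cup C_3$ I can certainly write $\bar X$ as a disjoint union $\bar X = Y_1\cup Y_2\cup Y_3$ with $Y_j\subseteq C_j$, and then $X_j$ = partners of $Y_j$, $M_j = M|(X_j\cup C_{j+1}\cup C_{j+2})$ (indices mod $3$), which has the two disjoint bases $C_{j+1},C_{j+2}$.

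With this in hand the core of the argument is exactly the Claim from Lemma \ref{foresttree}: for each $i$, contract $X_i$ in $M_i$ to get $M_i'$; by Proposition \ref{trivmat1} each of the two bases $C_{i+1},C_{i+2}$ of $M_i$ contains a basis of $M_i'$, so $M_i'$ restricted to the union of these two contains a $2$-base matroid, which by Proposition \ref{dfqdqw} has a rainbow basis $B_i'$ (using that $M_i'$ has a $2$-uniform partition induced by $\mathcal{P}$ — here one should note that $\mathcal{P}$ restricted to $E(M_i)$ need not be $2$-uniform, but $X_i$ is rainbow and its partners are outside $E(M_i)$, so after removing $X_i$'s classes the relevant partition is still $2$-uniform, or one simply passes to the restriction where Proposition \ref{dfqdqw} applies). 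Then $B_i'\cup X_i$ is a basis of $M_i$ by Proposition \ref{trivmat2}, and it is rainbow because $X_i$ is rainbow, $B_i'$ is rainbow, and no partner of an element of $X_i$ lies in $E(M_i)$. Finally, since $E(M_i)$ contains a basis of $M$ (namely $C_{i+1}$), each $B_i := B_i'\cup X_i$ is a basis of $M$, and $X = X_1\cup X_2\cup X_3 \subseteq B_1\cup B_2\cup B_3$, as required.

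The main obstacle — and the reason this is genuinely weaker than Lemma \ref{foresttree} — is exactly the bookkeeping above: with only three disjoint bases one cannot simultaneously reserve two bases "for building a rainbow basis containing $X_i$" and keep them disjoint across all three indices $i$, so one is forced to let the three restrictions $M_1,M_2,M_3$ overlap (each uses two of the three $C_j$'s), which is why three rather than two rainbow bases are needed. The one point requiring a little care in writing it out is verifying that the partition induced on each contracted restriction $M_i'$ is still amenable to Proposition \ref{dfqdqw}; this follows because $X_i$ is rainbow and, by the choice of $\bar X_i$, disjoint from its own partners within $E(M_i)$, so deleting/contracting $X_i$ only removes singleton-or-empty classes and leaves a $2$-base matroid with a $2$-uniform partition on the relevant sub-ground-set.
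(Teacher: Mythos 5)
Your proposal is correct and takes essentially the same route as the paper, which gives no separate proof but states that Lemma \ref{foresttree2} is proven analogously to Lemma \ref{foresttree}: split $X$ into three parts according to where the partners lie relative to the three disjoint bases, form for each part a restriction containing the other two bases, and repeat the contraction argument with Propositions \ref{trivmat1}, \ref{dfqdqw} and \ref{trivmat2}, which is exactly why three (overlapping) rainbow bases appear instead of two. One caveat: the hypothesis only says $M$ \emph{contains} three disjoint bases, so you may not assume $C_1\cup C_2\cup C_3=E(M)$; use your complement-style split (e.g.\ $Y_1=\bar{X}-(C_2\cup C_3)$, $Y_2=\bar{X}\cap C_2$, $Y_3=\bar{X}\cap C_3$) rather than insisting $Y_j\subseteq C_j$, since all the argument needs is that the partners of $X_j$ avoid the two bases defining $M_j$ (and, as you note, the application of Proposition \ref{dfqdqw} is to an arbitrary completion of the induced partition to a $2$-uniform one, a technicality the paper also leaves implicit).
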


We are now ready to prove Theorem \ref{k41} using Lemmas \ref{fzu} and \ref{foresttree}.

\begin{proof}(of Theorem \ref{k41})
By assumption, there is a factorization of $M$ into $k$ bases $(B_1,\ldots,B_k)$. For all $i=1,\ldots,\alpha$, let $Z_i=B_{2i-1}\cup B_{2i}$. By Lemma \ref{fzu}, we obtain that $Z_i$ can be covered by 3 sets which are independent in $M|Z_i$ and hence in $M$ and rainbow, one of which is a basis of $M|Z_i$. Observe that, as $Z_i$ contains a basis of $M$,  this last set is also a basis of $M$. Further, for each $i = k-\beta+1,\ldots,k$, as $\mathcal{P}$ is 2-uniform, we clearly can cover $B_i$ by two sets which are rainbow. As these sets are contained in $B_i$, they are also independent in $M$. In total, we obtain that $E(M)$ can be covered by $3 \alpha+2\beta$ sets which are independent in $M$ and rainbow,  $\alpha$ of which are bases of $M$. By Lemma \ref{foresttree}, each of these sets which are not bases can be covered by 2 rainbow bases of $M$. We hence obtain that $E(M)$ can be covered by $5 \alpha+4 \beta$ bases of $M$ which are rainbow with respect to $\mathcal{P}$.
\end{proof}
Similarly, Lemmas \ref{fzu} and \ref{foresttree2} yield Theorem \ref{k412}.
\subsection{Covering double trees with rainbow spanning trees}\label{k2}

The objective of this section is to prove Theorem \ref{doublecover}. We first give some preliminary results we later apply to prove our result on 2-base matroids. After this, we give the main lemma from which Theorem \ref{doublecover} follows easily.

We first need the following general property of matroids.

\begin{Proposition}\label{contract}
Let $M$ be a matroid, $X \subseteq E(M)$ and $M'$ the direct sum of $M|X$ and $M/X$. Then for any $Z \subseteq E(M)$, we have $r_{M'}(Z)\leq r_M(Z)$.
\end{Proposition}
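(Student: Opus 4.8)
The plan is to decompose the rank function on both sides and verify the inequality submodule by submodule. First I would recall that for the direct sum $M' = M|X \oplus M/X$, the rank of any $Z \subseteq E(M)$ splits as $r_{M'}(Z) = r_{M'}(Z \cap X) + r_{M'}(Z \setminus X) = r_{M|X}(Z \cap X) + r_{M/X}(Z \setminus X)$, simply because the ground set of $M'$ is partitioned into $X$ and $E(M)\setminus X$, with $M|X$ living on the first part and $M/X$ on the second. Then I would identify the two summands with familiar quantities: $r_{M|X}(Z\cap X) = r_M(Z\cap X)$ by definition of restriction, and $r_{M/X}(Z\setminus X) = r_M((Z\setminus X)\cup X) - r_M(X) = r_M(Z \cup X) - r_M(X)$ by the definition of contraction.

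Next I would assemble these to get $r_{M'}(Z) = r_M(Z \cap X) + r_M(Z \cup X) - r_M(X)$, and the goal reduces to showing $r_M(Z\cap X) + r_M(Z\cup X) - r_M(X) \leq r_M(Z)$, i.e. $r_M(Z\cap X) + r_M(Z\cup X) \leq r_M(Z) + r_M(X)$. This is exactly an instance of the submodularity inequality for the rank function $r_M$ applied to the sets $Z$ and $X$, since $Z \cap X$ is their intersection and $Z \cup X$ is their union. So the conclusion follows immediately.

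The main (and essentially only) obstacle is purely bookkeeping: making sure the restriction and contraction rank formulas are quoted correctly and that the direct-sum rank genuinely splits along the partition $\{X, E(M)\setminus X\}$; once those are in place, submodularity of $r_M$ does all the work. I would write it as a short three-line computation followed by one invocation of submodularity, with no case analysis needed.
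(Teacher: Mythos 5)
Your proposal is correct and follows essentially the same route as the paper: split $r_{M'}(Z)$ along the direct sum into $r_M(Z\cap X)$ and $r_{M/X}(Z\setminus X)=r_M(Z\cup X)-r_M(X)$, then apply submodularity of $r_M$ to the pair $Z,X$. No gaps; the paper's proof is just a condensed version of your three-line computation.
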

\begin{proof}
The contraction formula for the rank of a matroid and the submodularity of the rank function of $M$ yield \begin{align*}r_{M'}(Z)&=r_M(X \cap Z)+r_{M/X}(Z-X)\\&=r_M(X \cap Z)+r_M(Z \cup X)-r_M(X)\\&\leq r_M(Z).\end{align*}
\end{proof}


The following result can be found as Theorem 13.3.1 in \cite{book}.

\begin{Proposition}\label{sumf}
Let $M_1,\ldots,M_t$ be a collection of matroids on a common ground set, $M$ the sum of $M_1,\ldots,M_t$ and $Z \subseteq E(M)$. Then $$r_M(Z)=\min_{Y \subseteq Z}|Y|+\sum_{i=1}^tr_{M_i}(Z-Y).$$
\end{Proposition}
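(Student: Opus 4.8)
The statement is the classical rank formula for the union (sum) of matroids, due to Nash--Williams and Edmonds; I would prove it by establishing the two inequalities, granting as usual that the sum $M$ of matroids is again a matroid. Write $\mu(Z)=\min_{Y\subseteq Z}\bigl(|Y|+\sum_{i=1}^t r_{M_i}(Z-Y)\bigr)$ for the right-hand side. Since each $M_i$, the matroid $M$, and the operation of forming sums all commute with restriction to $Z$, it suffices to treat $Z=E$ with $E:=E(M)$ and prove $r_M(E)=\mu(E)$. The inequality $r_M(E)\le\mu(E)$ is a weak-duality argument: let $I$ be a maximum independent set of $M$; by definition of the sum, $I$ is a union of independent sets of the $M_i$, and removing from each the elements already used we may take this to be a \emph{disjoint} union $I=I_1\cup\ldots\cup I_t$ with $I_i\in\mathcal{I}(M_i)$. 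For any $Y\subseteq E$ the sets $I_i\setminus Y$ are pairwise disjoint, independent in $M_i$, and contained in $E-Y$, so $|I_i\setminus Y|\le r_{M_i}(E-Y)$; together with $|I\cap Y|\le|Y|$ this gives $r_M(E)=|I|=|I\cap Y|+\sum_i|I_i\setminus Y|\le|Y|+\sum_i r_{M_i}(E-Y)$, and minimising over $Y$ finishes this direction.

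For the reverse inequality I would reduce to Edmonds' matroid intersection theorem \cite{e4}. On the ground set $E'=E\times\{1,\ldots,t\}$ let $N_1$ be the direct sum of the copies $M_1,\ldots,M_t$ placed on $E\times\{1\},\ldots,E\times\{t\}$, and let $N_2$ be the partition matroid whose rank-$1$ classes are the fibres $F_e=\{e\}\times\{1,\ldots,t\}$, $e\in E$. Identifying $S\subseteq E'$ with the tuple $(S_1,\ldots,S_t)$, where $S_i=\{e:(e,i)\in S\}$, a set $S$ is independent in $N_1$ iff every $S_i\in\mathcal{I}(M_i)$ and independent in $N_2$ iff the $S_i$ are pairwise disjoint; hence a common independent set of $N_1$ and $N_2$ of size $s$ is exactly a family of pairwise disjoint $S_i\in\mathcal{I}(M_i)$ with $\sum_i|S_i|=s$, i.e.\ (passing to its union) an independent set of $M$ of size $s$. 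Thus $r_M(E)=\max\{|S|:S\in\mathcal{I}(N_1)\cap\mathcal{I}(N_2)\}$, which by matroid intersection equals $\min_{A\subseteq E'}\bigl(r_{N_1}(A)+r_{N_2}(E'\setminus A)\bigr)$. Writing $A=(A_1,\ldots,A_t)$ as above, we have $r_{N_1}(A)=\sum_i r_{M_i}(A_i)$ and $r_{N_2}(E'\setminus A)=|E|-|\bigcap_i A_i|$ (the number of fibres not entirely contained in $A$), so the minimum equals $\min_{(A_1,\ldots,A_t)}\bigl(|E|-|\bigcap_i A_i|+\sum_i r_{M_i}(A_i)\bigr)$.

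It remains to check that this last quantity equals $\mu(E)$, which is a routine manipulation. Given a tuple $(A_i)$, put $W=\bigcap_i A_i$ and $Y=E-W$; then $|E|-|W|=|Y|$ and $r_{M_i}(A_i)\ge r_{M_i}(W)=r_{M_i}(E-Y)$, so the value is at least $|Y|+\sum_i r_{M_i}(E-Y)\ge\mu(E)$. Conversely, for any $Y\subseteq E$ the choice $A_1=\ldots=A_t=E-Y$ gives $\bigcap_i A_i=E-Y$ and realises the value $|Y|+\sum_i r_{M_i}(E-Y)$, so the minimum is at most $\mu(E)$. Hence $r_M(E)=\mu(E)$, as desired. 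The only genuine content is the hard inequality, which here is carried entirely by Edmonds' theorem; if one wants a self-contained argument avoiding matroid intersection, the standard route is the shortest-augmenting-path proof of the matroid union theorem (iteratively enlarge a disjoint family $I_1,\ldots,I_t$ by rotating all the exchanges along a shortest augmenting path in the exchange digraph, then read off the optimal $Y$ from the reachable set once no augmenting path remains), and there the delicate point --- the place I would concentrate --- is verifying that these simultaneous exchanges do not interfere, which is exactly where minimality of the path is used.
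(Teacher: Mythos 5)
Your proof is correct. Note that the paper does not prove this proposition at all: it is quoted as Theorem 13.3.1 of Frank's book \cite{book}, i.e.\ it is the classical Nash--Williams/Edmonds matroid union rank formula taken as a black box. Your argument supplies a genuine proof, and both halves check out: the weak-duality direction is fine (the disjointness of the $I_i$ is exactly what makes $|I\setminus Y|=\sum_i|I_i\setminus Y|$, and the reduction to $Z=E$ by restriction is legitimate since restriction commutes with forming the sum), and the hard direction via the standard reduction to matroid intersection is carried out correctly: on $E\times\{1,\dots,t\}$ the common independent sets of the direct sum $N_1$ and the fibre partition matroid $N_2$ are precisely the disjoint systems $(S_i)$ with $S_i$ independent in $M_i$, the rank computations $r_{N_1}(A)=\sum_i r_{M_i}(A_i)$ and $r_{N_2}(E'\setminus A)=|E|-\bigl|\bigcap_i A_i\bigr|$ are right, and the final monotonicity argument converting the minimum over tuples $(A_i)$ into the minimum over $Y\subseteq E$ (take $Y=E-\bigcap_i A_i$ one way, $A_i=E-Y$ the other) is exactly the routine step needed. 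Using matroid intersection as the engine is also consistent with the paper's own toolkit, since Proposition \ref{dfqdqw} is likewise attributed to Edmonds' intersection theorem; the alternative you mention (the augmenting-path proof of matroid union) would make the result self-contained but is not required here.
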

We are now ready to conclude the following result on matroids containing two bases.

\begin{Proposition}\label{gu1}
A matroid $M$ contains two disjoint bases if and only if $$|Z|+2r_M(E(M)-Z)\geq 2r_M(E(M))$$ holds for every $Z \subseteq E(M).$
\end{Proposition}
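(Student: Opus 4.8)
The plan is to derive Proposition~\ref{gu1} from the matroid union theorem. Let $M^{(2)}$ denote the sum $M \oplus M$ of two copies of $M$ on the same ground set $E(M)$. By definition, $M$ contains two disjoint bases if and only if $E(M)$ is independent in $M^{(2)}$ and has rank $2r_M(E(M))$ there; equivalently, $r_{M^{(2)}}(E(M)) \geq 2r_M(E(M))$ (the reverse inequality being automatic), which by independence is the same as $r_{M^{(2)}}(E(M)) = 2r_M(E(M))$. So the whole statement reduces to computing $r_{M^{(2)}}(E(M))$, and for that I would invoke Proposition~\ref{sumf} with $t = 2$ and $M_1 = M_2 = M$, $Z = E(M)$.

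Carrying this out, Proposition~\ref{sumf} gives
\[
r_{M^{(2)}}(E(M)) = \min_{Y \subseteq E(M)} \bigl(|Y| + 2 r_M(E(M) - Y)\bigr).
\]
Substituting $Z = E(M) - Y$ (a bijective reparametrization of the subsets of $E(M)$) turns the right-hand side into $\min_{Z \subseteq E(M)} \bigl(|E(M)| - |Z| + 2 r_M(Z)\bigr)$, but it is cleaner to keep the variable as $Y$ and just note that $E(M) - Y$ ranges over all subsets. The condition $r_{M^{(2)}}(E(M)) \geq 2 r_M(E(M))$ then reads: for every $Y \subseteq E(M)$,
\[
|Y| + 2 r_M(E(M) - Y) \geq 2 r_M(E(M)).
\]
Renaming $Y$ as $Z$ gives exactly the stated inequality, so the equivalence follows.

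There is essentially no obstacle here beyond bookkeeping; the only point that needs a line of justification is why ``$M$ has two disjoint bases'' is equivalent to ``$r_{M^{(2)}}(E(M)) \geq 2 r_M(E(M))$''. The $\Rightarrow$ direction is immediate since two disjoint bases witness an independent set of $M^{(2)}$ of size $2 r_M(E(M))$. For $\Leftarrow$, a maximum independent set of $M^{(2)}$ decomposes as $I_1 \cup I_2$ with $I_1, I_2$ disjoint and independent in $M$; if their total size is at least $2 r_M(E(M))$ then each must have size exactly $r_M(E(M))$, i.e.\ be a basis, and they are disjoint. One should also observe $r_{M^{(2)}}(E(M)) \leq 2 r_M(E(M))$ always holds, so the inequality is in fact an equality whenever it holds, which is what makes the iff clean. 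I would state these observations briefly and then the computation above closes the proof.
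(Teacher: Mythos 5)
Your proof is correct and takes essentially the same approach as the paper: the paper likewise handles the nontrivial direction by forming the sum of two copies of $M$ and applying Proposition~\ref{sumf} (it does the easy direction by a direct counting argument with the two bases, but that difference is immaterial). One small caution: your opening claim that two disjoint bases exist iff $E(M)$ is independent in $M^{(2)}$ of rank $2r_M(E(M))$ is not right as stated, since $E(M)$ need not be independent in the sum; the correct criterion $r_{M^{(2)}}(E(M)) \geq 2r_M(E(M))$, which you state immediately afterwards and justify properly at the end via a maximum independent set $I_1 \cup I_2$, is what your argument actually uses.
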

\begin{proof}
First suppose that $M$ contains two disjoint bases $B_1,B_2$ and let $Z \subseteq E(M).$ As $B_1$ and $B_2$ are bases of $M$, we obtain that $B_1-Z$ and $B_2-Z$ are independent in $M$.
 By the monotonicity of $r_M$, this yields

 \begin{align*}2r_M(E(M))&=|B_1\cup B_2|\\&=|B_1-Z|+|B_2-Z|+|(B_1\cup B_2)\cap Z|\\ &\leq r_M(B_1-Z)+r_M(B_2-Z)+|Z|\\&\leq |Z|+2r_M(E(M)-Z).\end{align*} 

For the other direction, let $M'$ be the matroid that is the sum of two copies $M_1,M_2$ of $M$ and let $Y \subseteq E(M)$. We have $$|Y|+\sum_{i=1}^2r_{M_i}(E(M)-Y)\geq 2r_M(E(M))$$ by assumption. By Proposition \ref{sumf}, we obtain that $r_{M'}(E(M))=2r_M(E(M))$. By definition of $M'$, we obtain that $M$ contains two disjoint bases.
\end{proof}
We are now ready to give the following characterization of 2-base matroids.
\begin{Proposition}\label{gu2}
A matroid $M$ is a 2-base matroid if and only if $r_M(E(M))=\frac{1}{2}|E(M)|$ and $r_M(Z)\geq \frac{1}{2}|Z|$ holds for every $Z \subseteq E(M)$.
\end{Proposition}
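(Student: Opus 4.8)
The statement to prove is Proposition~\ref{gu2}: a matroid $M$ is a 2-base matroid if and only if $r_M(E(M))=\frac12|E(M)|$ and $r_M(Z)\geq \frac12|Z|$ for every $Z\subseteq E(M)$. The plan is to derive this from Proposition~\ref{gu1}, which characterizes matroids with two disjoint bases, together with the elementary observation that a matroid factorizes into two disjoint bases precisely when it has two disjoint bases and $|E(M)|=2r_M(E(M))$ (two disjoint bases cover the ground set iff their total size $2r_M(E(M))$ equals $|E(M)|$).

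First I would handle the forward direction. Assume $M$ factorizes into two disjoint bases $B_1,B_2$. Then $|E(M)|=|B_1|+|B_2|=2r_M(E(M))$, giving the rank equality. For the lower bound on $r_M(Z)$: since $B_1,B_2$ partition $E(M)$, we have $|Z\cap B_1|+|Z\cap B_2|=|Z|$, so one of these, say $|Z\cap B_1|$, is at least $\frac12|Z|$; as $Z\cap B_1$ is independent, $r_M(Z)\geq |Z\cap B_1|\geq \frac12|Z|$.

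For the converse, assume $r_M(E(M))=\frac12|E(M)|$ and $r_M(Z)\geq\frac12|Z|$ for all $Z$. The key point is to verify the hypothesis of Proposition~\ref{gu1}, namely $|Z|+2r_M(E(M)-Z)\geq 2r_M(E(M))$ for every $Z\subseteq E(M)$. Apply the assumed bound to the complement: $r_M(E(M)-Z)\geq\frac12|E(M)-Z|=\frac12(|E(M)|-|Z|)$, hence $2r_M(E(M)-Z)\geq |E(M)|-|Z|=2r_M(E(M))-|Z|$, which rearranges to exactly the desired inequality. By Proposition~\ref{gu1}, $M$ has two disjoint bases $B_1,B_2$, and since $|B_1|+|B_2|=2r_M(E(M))=|E(M)|$, these two disjoint sets must cover $E(M)$, so $(B_1,B_2)$ is a factorization of $M$ into bases.

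I do not expect any genuine obstacle here: both directions are short manipulations, and the only mild subtlety is remembering to separate ``contains two disjoint bases'' (the content of Proposition~\ref{gu1}) from ``factorizes into two bases'' (which additionally needs the cardinality equality $|E(M)|=2r_M(E(M))$ to force the two disjoint bases to exhaust the ground set). Everything else is monotonicity of the rank function and counting.
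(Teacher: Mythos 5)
Your proof is correct and follows essentially the same route as the paper: the forward direction via counting intersections with the two bases, and the converse by applying the rank lower bound to $E(M)-Z$ to verify the hypothesis of Proposition~\ref{gu1}. The only difference is that you spell out explicitly why two disjoint bases must exhaust the ground set given $|E(M)|=2r_M(E(M))$, a step the paper leaves implicit.
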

\begin{proof}
First suppose that $M$ is a 2-base matroid, so $E(M)$ can be partitioned into two bases $B_1,B_2$ of $M$. As $|B_1|=|B_2|$, we obtain $r_M(E(M))=|B_1|=\frac{1}{2}|E(M)|$. Further, for every $Z \subseteq E(M)$, we have $$r_M(Z)\geq \max\{|B_1\cap Z|, |B_2 \cap Z|\}\geq \frac{1}{2}|Z|.$$

Now suppose that $r_M(E(M))=\frac{1}{2}|E(M)|$ and $r_M(Z)\geq \frac{1}{2}|Z|$ holds for every $Z \subseteq E(M)$. It suffices to prove that $M$ contains two disjoint bases. Consider some $Z \subseteq E(M)$. By assumption, we have $$|Z|+2r_M(E(M)-Z) \geq |Z|+|E(M)-Z|=|E(M)|=2r_M(E(M)).$$ Hence the statement follows from Proposition \ref{gu1}.
\end{proof}
The next result allows some modifications on 2-base matroids and will be applied in the proof of the main lemma.

For a 2-base matroid $M$, a set $X \subseteq E(M)$ is called {\it tight} if $M|X$ is also a 2-base matroid. We say that $X$ is {\it trivial} if  $X=\emptyset$ or $X= E(M)$, {\it nontrivial} otherwise.

\begin{Proposition}\label{kontrahierbaum}
Let $M$ be a 2-base matroid and $X \subseteq E(M)$ a tight set. Then $M/X$ is also a 2-base matroid.
\end{Proposition}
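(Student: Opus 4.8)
The plan is to verify the characterization of $2$-base matroids from Proposition \ref{gu2} directly on $M/X$. So I need to show two things: first, that $r_{M/X}(E(M/X)) = \frac12 |E(M/X)|$, and second, that $r_{M/X}(Z) \geq \frac12 |Z|$ for every $Z \subseteq E(M/X) = E(M) - X$.

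For the rank-of-ground-set equality, I would use the fact that $X$ is tight, so $M|X$ is a $2$-base matroid, and hence by Proposition \ref{gu2} applied to $M|X$ we have $r_M(X) = \frac12 |X|$. Combining with $r_M(E(M)) = \frac12 |E(M)|$ (again Proposition \ref{gu2}, this time for $M$), the contraction formula gives
\[
r_{M/X}(E(M) - X) = r_M(E(M)) - r_M(X) = \tfrac12 |E(M)| - \tfrac12 |X| = \tfrac12 |E(M) - X|,
\]
which is exactly the first condition since $|E(M/X)| = |E(M)| - |X|$.

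For the lower bound, I would take an arbitrary $Z \subseteq E(M) - X$ and estimate using submodularity of $r_M$ together with $r_M(X) = \frac12|X|$ and the hypothesis $r_M(Z \cup X) \geq \frac12 |Z \cup X| = \frac12(|Z| + |X|)$ (the latter from Proposition \ref{gu2} for $M$). Indeed,
\[
r_{M/X}(Z) = r_M(Z \cup X) - r_M(X) \geq \tfrac12(|Z| + |X|) - \tfrac12 |X| = \tfrac12 |Z|.
\]
Then Proposition \ref{gu2}, now read in the other direction, lets me conclude that $M/X$ is a $2$-base matroid.

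The main thing to be careful about is simply invoking tightness in the right place: the only role of the hypothesis that $X$ is tight is to supply $r_M(X) = \frac12|X|$ via Proposition \ref{gu2} applied to $M|X$ (noting $r_{M|X} = r_M$ on subsets of $X$). Once that identity is in hand, both conditions of Proposition \ref{gu2} for $M/X$ follow from routine applications of the contraction formula and submodularity, so there is no real obstacle here — the proof is a short computation. (One could alternatively phrase it through Proposition \ref{gu1} on $M/X$, but going through Proposition \ref{gu2} is cleaner.)
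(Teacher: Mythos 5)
Your proposal is correct and follows essentially the same route as the paper: both verify the two conditions of Proposition \ref{gu2} for $M/X$ via the contraction formula, using tightness only to supply $r_M(X)=\tfrac12|X|$. (Your passing mention of submodularity is unnecessary — the displayed computation uses only the contraction formula and the bound $r_M(Z\cup X)\geq \tfrac12|Z\cup X|$ from Proposition \ref{gu2}, exactly as in the paper.)
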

\begin{proof}
First observe that as both $M$ and $M|X$ are 2-base matroids and by Proposition \ref{gu2}, we have $$r_{M/X}(E(M/X))=r_M(E(M))-r_M(X)=\frac{1}{2}|E(M)|-\frac{1}{2}|X|=\frac{1}{2}|E(M/X)|.$$ 

Further, for any $Z \subseteq E(M)-X$, as $X$ is a tight set in $M$, we have $$r_{M/X}(Z)=r_M(X \cup Z)-r_M(X)\geq \frac{1}{2}|X\cup Z|-\frac{1}{2}|X|=\frac{1}{2}| Z|.$$ The statement hence follows from Proposition \ref{gu2}.
\end{proof}

The following result allows to benefit from the absence of a tight set. 
\begin{Proposition}\label{delcon}
Let $M$ be a 2-base matroid that does not contain a nontrivial tight set and $e,f \in E(M)$. Then $M/e-f$ is a 2-base matroid. 
\end{Proposition}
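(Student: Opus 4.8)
The plan is to apply the characterization of 2-base matroids in Proposition \ref{gu2} to $M' := M/e - f$, for which I must verify the two conditions: $r_{M'}(E(M')) = \tfrac12 |E(M')|$ and $r_{M'}(Z) \geq \tfrac12 |Z|$ for all $Z \subseteq E(M')$. For the rank of the whole ground set: since $M$ is a 2-base matroid, $r_M(E(M)) = \tfrac12 |E(M)|$, and $r_{M/e}(E(M/e)) = r_M(E(M)) - 1 = \tfrac12(|E(M)| - 2) = \tfrac12 |E(M/e)|$; then deleting $f$ does not change the rank (as $M/e$ has two disjoint bases, $f$ is not a coloop of $M/e$, so $r_{M'}(E(M')) = r_{M/e}(E(M/e)) = \tfrac12(|E(M)| - 2) = \tfrac12 |E(M')|$). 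I should first check this ``$f$ is not a coloop of $M/e$'' claim carefully — it follows since $e$ and $f$ are distinct and $M$ has no nontrivial tight set; if $f$ were a coloop of $M/e$ then $E(M) - f$ would have rank $r_M(E(M)) - 1 = \tfrac12|E(M)| - 1$, but by Proposition \ref{gu2} we need $r_M(E(M) - f) \geq \tfrac12(|E(M)| - 1)$, which forces $|E(M)|$ odd unless $E(M) - f$ is itself tight; I expect a short argument eliminates this, possibly requiring a mild nondegeneracy remark about $M$ having more than one element.

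For the covering condition, take any $Z \subseteq E(M') = E(M) - e - f$. Then
\begin{align*}
r_{M'}(Z) &= r_{M/e}(Z) = r_M(Z \cup e) - r_M(e) = r_M(Z \cup e) - 1.
\end{align*}
Since $M$ is a 2-base matroid, Proposition \ref{gu2} gives $r_M(Z \cup e) \geq \tfrac12 |Z \cup e| = \tfrac12(|Z| + 1)$. This only yields $r_{M'}(Z) \geq \tfrac12|Z| - \tfrac12$, which is off by $\tfrac12$ — so the crude bound is not enough. This is the main obstacle: I must use that $M$ has no nontrivial tight set to improve the inequality by $\tfrac12$ whenever $Z \neq \emptyset$. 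The idea is that if $r_M(Z \cup e) = \tfrac12(|Z| + 1)$ exactly, then $Z \cup e$ would be a set on which the rank inequality of Proposition \ref{gu2} is tight; I want to argue that this makes $Z \cup e$ (or a related set) a tight set. Concretely, if $Z \cup e$ has odd size and rank exactly half-plus-a-half, one extends or contracts to produce a nontrivial tight set: e.g., if there exists $g \notin Z \cup e$ with $g$ in the closure, then $Z \cup e \cup g$ has the same rank and even size, hence is tight and nontrivial (nontrivial because it omits $f$), contradicting the hypothesis; if no such $g$ exists then $Z \cup e$ spans $M$, forcing $|Z| + 1 = |E(M)|$ i.e. $Z = E(M) - e$, but $Z \subseteq E(M) - e - f$ so $f \notin Z$, another contradiction. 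Hence $r_M(Z \cup e) \geq \tfrac12(|Z| + 1) + \tfrac12 = \tfrac12|Z| + 1$, giving $r_{M'}(Z) \geq \tfrac12|Z|$ as required. The case $Z = \emptyset$ is trivial. One should double-check the edge case where $Z \cup e = E(M)$ cannot occur (it can't, since $f \notin Z \cup e$), and the case $E(M) - f$ being tight in the coloop discussion above; modulo these small verifications, Propositions \ref{gu2} and the no-tight-set hypothesis combine to finish the proof.
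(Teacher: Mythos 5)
Your plan is the same as the paper's: verify the two conditions of Proposition \ref{gu2} for $M/e-f$. The ground-set computation is fine (indeed $M/e$ contains two disjoint bases by Proposition \ref{trivmat1}, so $f$ cannot be a coloop of $M/e$; your backup discussion ``forces $|E(M)|$ odd unless $E(M)-f$ is itself tight'' is off, since the coloop assumption gives $r_M(E(M)-f)=\tfrac12|E(M)|-1<\tfrac12|E(M)-f|$, contradicting Proposition \ref{gu2} outright, with no case distinction). The genuine problem is in the covering condition, exactly where the no-nontrivial-tight-set hypothesis has to do its work. The dangerous case is $|Z\cup e|$ \emph{even} (equivalently $|Z|$ odd) with $r_M(Z\cup e)=\tfrac12|Z\cup e|$; when $|Z\cup e|$ is odd, integrality of the rank already yields $r_M(Z\cup e)\ge\tfrac12|Z\cup e|+\tfrac12=\tfrac12|Z|+1$ and nothing more is needed. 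Your ``concrete'' argument analyses precisely this harmless odd case (``odd size and rank exactly half-plus-a-half''), and the extension by an element $g$ in the closure is not needed anywhere (moreover, in that branch the only available $g$ might be $f$, spoiling your ``omits $f$'' nontriviality claim). Meanwhile the case that actually requires an argument is covered only by the bare assertion that a set of even size whose rank equals half its size is tight.

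That assertion is true, but proving it is the real content of the paper's proof (its Claim \ref{un}): with $B_1,B_2$ the two disjoint bases of $M$, if $r_M(X)=\tfrac12|X|$ then $B_1\cap X$ and $B_2\cap X$ are independent sets of size $\tfrac12|X|$, hence bases of $M|X$, so $X$ is tight; equivalently, apply Proposition \ref{gu2} to $M|X$, whose covering condition it inherits from $M$. Once this is in place, the clean route is the paper's: every nonempty proper $X\subsetneq E(M)$ satisfies the strict inequality $r_M(X)>\tfrac12|X|$, and applying this to $X=Z\cup e$ (nonempty, and proper because $f\notin Z\cup e$) together with integrality gives $r_M(Z\cup e)\ge\tfrac12|Z|+1$, hence $r_{M/e-f}(Z)\ge\tfrac12|Z|$. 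So your approach coincides with the paper's, but as written the proof has a hole at the decisive step: supply the tightness-from-equality argument and delete the misplaced parity case and the $g$-extension.
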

\begin{proof}
Let $M_0=M/e-f$. 

We need the following two claims.
\begin{Claim}\label{un}
Let $\emptyset \neq X \subsetneq E(M)$. Then $r_{M}(X)> \frac{1}{2}|X|.$
\end{Claim}
\begin{proof}
As $M$ is a 2-base matroid, $E(M)$ can be partitioned into two disjoint bases $B_1,B_2$ of $M$. We obtain that $(B_1\cap X, B_2 \cap X)$ is a partition of $X$ into sets which are independent in $M$. This yields $$r_M(X)\geq \max\{|B_1 \cap X|,|B_2 \cap X|\}\geq \frac{1}{2}|X|.$$ Further, if equality holds throughout, then $(B_1\cap X, B_2 \cap X)$ is a partition of $X$ into bases of $M|X$, so $X$ is a tight set contradicting the assumption. We hence obtain $r_M(X)>\frac{1}{2}|X|$.
\end{proof}

\begin{Claim}\label{deux}
Let $Y \subseteq E(M_0)$. Then $r_{M_0}(Y)\geq \frac{1}{2}|Y|.$
\end{Claim}
\begin{proof}

We obtain by Claim \ref{un} that $$r_{M_0}(Y)=r_{M/e}(Y)=r_M(Y \cup e)-r_M(e)\geq \frac{1}{2}(|Y \cup e|+1)-1= \frac{1}{2}|Y|.$$
\end{proof}

Further observe that as $M$ is a 2-base matroid, we have \begin{align*}r_{M_0}(E(M_0))&=r_M(E(M)-f)-r_M(e)\\&=r_M(E(M))-1\\&=\frac{1}{2}|E(M)|-1\\&=\frac{1}{2}|E(M_0)|.\end{align*}
The statement now follows from Proposition \ref{gu2}.
\end{proof}

We are now ready to prove the main lemma from which Theorem \ref{doublecover} follows easily.

\begin{Lemma}\label{main}
Let $M$ be a matroid that is the direct sum of 2-base matroids $M_1,\ldots,M_t$, $Z \subseteq E(M)$ and $\mathcal{P}$ a 2-uniform partition of $E(M)$. Then there is a basis $B$ of $M$ such that $B$ is rainbow and $|Z \cap B|\geq \frac{|Z|}{2}$.
\end{Lemma}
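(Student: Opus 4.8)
The plan is to induct on $|E(M)|$ (equivalently on $t$ together with $\sum_i |E(M_i)|$), and to exploit the two structural operations prepared above: contracting a nontrivial tight set (Proposition \ref{kontrahierbaum}) and, in the absence of tight sets, performing a ``delete–contract'' move (Proposition \ref{delcon}). The base case, when every $M_i$ has rank $0$ or $1$, is immediate: a rank-$1$ $2$-base matroid is just a pair of parallel elements forming a single class of $\mathcal{P}$, so picking from each such $M_i$ the (unique) element of $Z$ if $Z$ meets it, and an arbitrary element otherwise, yields a rainbow basis meeting $Z$ in at least $\lceil |Z|/2\rceil \ge |Z|/2$ elements. (Proposition \ref{dfqdqw} is the $t=1$, $Z=\emptyset$ shadow of this.)

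For the inductive step I would first dispose of the \emph{reducible} case: if some $M_i$ contains a nontrivial tight set $X$, then by Proposition \ref{kontrahierbaum} $M_i/X$ is a $2$-base matroid, and $M|X$ is a $2$-base matroid by definition of tightness; replacing $M_i$ by the direct summands $M_i|X$ and $M_i/X$ produces a matroid $M'$ which is again a direct sum of strictly smaller $2$-base matroids, has the same ground set, and by Proposition \ref{contract} satisfies $r_{M'}(Y)\le r_M(Y)$ for all $Y$ — so every basis of $M'$ is a basis of $M$. Apply the induction hypothesis to $(M',Z,\mathcal{P})$; the resulting rainbow basis $B$ of $M'$ is a rainbow basis of $M$ with $|Z\cap B|\ge |Z|/2$, and we are done. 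The delicate point here is checking that the new decomposition still consists of honest $2$-base matroids; this is exactly what Propositions \ref{kontrahierbaum} and the tightness of $X$ give us, so it is clean.

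The heart of the argument is the \emph{irreducible} case, where no $M_i$ has a nontrivial tight set and every $M_i$ has rank $\ge 2$ (say $M_1$ does). The idea is to pick a class $\{x,y\}\in\mathcal{P}$ cleverly, commit one of its two elements to the basis, discard the other, and recurse. If some class $\{x,y\}$ has both elements inside a single $M_i$ — WLOG $M_1$ — then by Proposition \ref{delcon} the matroid $M_1/x - y$ is again a $2$-base matroid with no forced structural defect; I would take $B := \{x\}\cup B_0$ where $B_0$ is the rainbow basis of the smaller direct sum $(M_1/x-y)\oplus M_2\oplus\cdots\oplus M_t$ obtained from the induction hypothesis applied to $Z':=Z\setminus\{x,y\}$. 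By Proposition \ref{trivmat2}, $B$ is a basis of $M$; it is rainbow because $\{x,y\}$ is the only class $B$ meets in $x$, and $B_0$ avoids $y$; and $|Z\cap B|\ge |Z'\cap B_0| \ge |Z'|/2 \ge (|Z|-2)/2$, which is not quite $|Z|/2$ — so the choice of \emph{which} element of $\{x,y\}$ to keep must be made to absorb the loss: if $Z$ meets $\{x,y\}$, keep an element of $Z$, gaining one back. The remaining subcase is when every class of $\mathcal{P}$ is split between two different summands; there one must select a matching in the ``class graph'' on $\{M_1,\dots,M_t\}$ realizing a consistent choice, and this — ensuring one can simultaneously contract one chosen element per summand so that each $M_i$ stays a $2$-base matroid while the chosen set is independent and rainbow — is the main obstacle. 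I expect the right tool is to again reduce to Proposition \ref{delcon}/\ref{kontrahierbaum} one summand at a time, processing classes greedily and using Claim \ref{un}-type strict rank inequalities to keep each $M_i$ feasible; the bookkeeping on the $Z$-count, where one wants to keep a $Z$-element whenever a class touches $Z$, is what forces the $\ge |Z|/2$ rather than a weaker bound.
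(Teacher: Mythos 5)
Your reducible case and your ``class inside one summand'' subcase are sound and essentially match the paper (the paper runs the same split $M_i|X \oplus M_i/X$ via Propositions \ref{kontrahierbaum} and \ref{contract}; note, though, that there the ground set does \emph{not} shrink, so your stated induction on $|E(M)|$ does not apply to that step -- the paper fixes this by a secondary measure, namely the total number of nontrivial tight sets, and you would need some such well-founded measure too). The genuine gap is exactly the subcase you flag as ``the main obstacle'': classes of $\mathcal{P}$ split between two summands. Your hope of ``processing classes greedily, one summand at a time'' cannot work as stated, because Proposition \ref{delcon} only preserves the 2-base property when you contract one element \emph{and} delete one element inside the \emph{same} summand; contracting the kept element $x\in E(M_1)$ alone, or deleting its partner $y\in E(M_2)$ alone, leaves a summand with $2r\neq |E|$, which is never a 2-base matroid. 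So each summand you touch must receive both a contracted element and a deleted element, and since the deleted element is always the $\mathcal{P}$-partner of an element contracted in a \emph{different} summand, the choices chain together and must be made globally consistent.

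The paper's resolving idea, which is absent from your proposal, is to build a closed alternating sequence $e_1,f_1,\ldots,e_\mu,f_\mu$ in which each $\{e_j,f_j\}$ is a class of $\mathcal{P}$, consecutive elements $f_{j-1},e_j$ (indices cyclic, including $f_\mu,e_1$) lie in a common summand, and every summand contains at most one $e_j$ and at most one $f_j$; such a cycle exists by extending a maximal chain until a summand repeats. Then each affected summand undergoes $M_i/e_j-f_{j-1}$, which stays 2-base by Proposition \ref{delcon}, and the closed cycle guarantees the removed elements are exactly $\{e_1,\ldots,e_\mu,f_1,\ldots,f_\mu\}$. Crucially, the $|Z|/2$ bookkeeping is rescued not greedily per class but by one global symmetry choice: reverse the roles of the $e$'s and $f$'s around the whole cycle if necessary, so that the kept side satisfies $|\{e_1,\ldots,e_\mu\}\cap Z|\geq|\{f_1,\ldots,f_\mu\}\cap Z|$, which gives $|Z\cap B|\geq \tfrac12|Z\cap E(M')| + \tfrac12|Z\cap\{e_1,\ldots,e_\mu,f_1,\ldots,f_\mu\}| = |Z|/2$. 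Your per-class choice (``keep the $Z$-element of the class'') cannot in general be made consistently along such a cycle, which is why the global orientation argument is needed; without this construction the proof of the cross-summand case is missing.
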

\begin{proof}
Observe that a set $B \subseteq E(M)$ is a basis of $M$ if and only if $B \cap E(M_i)$ is a basis of $M_i$ for $i=1,\ldots,t$.
Suppose for the sake of a contradiction that $(M,M_1,\ldots,M_t)$ is a counterexample that is minimum with respect to the number of elements of $M$  and subject to this, it is minimum with respect to the total number of nontrivial tight sets contained in $M_1,\ldots,M_t$. 

 First suppose that there is some $i\in \{1,\ldots,t\}$, say $t$, such that $M_i$ contains a nontrivial tight set $X$. Let $M'$ be the matroid that is the direct sum of $M'_1,\ldots,M'_{t+1}$ where $M'_i=M_i$ for all $i=1,\ldots,t-1$, $M'_t=M_t|X$ and $M'_{t+1}=M_t/X$. Observe that by Proposition \ref{kontrahierbaum}, $M'_i$ is a 2-base matroid for $i=1,\ldots,t+1$. Further, every nontrivial tight set in one of $M_1'\ldots,M'_{t+1}$ also is a nontrivial tight set in one of $M_1,\ldots,M_t$ and $X$ is a nontrivial tight set in $M_t$ but none of $M_1'\ldots,M'_{t+1}$. We obtain that the total number of nontrivial tight sets in $M'_1,\ldots,M'_{t+1}$ is less than in $M_1,\ldots,M_t$. Hence, by minimality, there is a basis $B$  of $M'$ such that $B \cap E(M'_i)$  is rainbow and $|Z \cap B|\geq \frac{|Z|}{2}$. By Proposition \ref{contract}, we have $$r_M(B)\geq r_{M'}(B)= \frac{|E(M')|}{2}=\frac{|E(M)|}{2},$$ so $B$ is a basis of $M$. This contradicts $M$ being a counterexample.

Now suppose that none of $M_1,\ldots,M_t$ contains a nontrivial tight set.
\begin{Claim}
There is a  sequence $e_1,f_1,\ldots,e_\mu,f_\mu$ of distinct elements of  $E(M)$ with the following properties:
\begin{enumerate}[(i)]
\item $\{e_j,f_j\} \in \mathcal{P}$ for all $j=1,\ldots,\mu$,
\item for every $j=2,\ldots,\mu$, there is some  $i$ such that $\{f_{j-1},e_j\}\subseteq E(M_i)$,
\item $|E(M_i)\cap \{e_1,\ldots,e_\mu\}|,|E(M_i)\cap \{f_1,\ldots,f_\mu\}|\leq 1$ for all $i=1,\ldots,t$,
\item there is some  $i$ such that $\{f_{\mu},e_1\}\subseteq E(M_i)$,
\end{enumerate}
\end{Claim}
\begin{proof}
Consider a longest sequence satisfying $(i)-(iii)$. Let $\ell$ be the unique integer such that $f_{\mu}$ is contained in $E(M_{\ell})$. If there is some $j \in \{1,\ldots,\mu\}$ such that $e_j$ is contained in $E(M_{\ell})$, then $e_j,f_j,\ldots,e_\mu,f_\mu$ is a sequence satisfying $(i)-(iv)$, so we are done. Otherwise, as $M_\ell$ is a 2-base matroid and $|E(M_\ell)\cap \{f_1,\ldots,f_\mu\}|\leq 1$, we can pick some element $e_{\mu+1}\in E(M_\ell)-f_\mu$. Let $f_{\mu+1}$ be the unique element of $E(M)$ with $\{e_{\mu+1},f_{\mu+1}\} \in \mathcal{P}$. By construction, we have $f_{\mu+1}\in E(M)-\{e_1,\ldots,e_{\mu+1},f_1,\ldots,f_{\mu}\}$. Let $q$ be the unique integer such that $f_{\mu+1}\in E(M_q)$. If $|E(M_q)\cap \{f_1,\ldots,f_\mu\}|=0$, then $e_1,f_1,\ldots,e_{\mu+1},f_{\mu+1}$ is a longer sequence satisfying $(i)-(iii)$, a contradiction. Otherwise there is some $j\leq \mu$ such that $f_j\in E(M_q)$ and so $e_{j+1},f_{j+1},\ldots,e_{\mu+1},f_{\mu+1}$ is a sequence satisfying $(i)-(iv)$. This finishes the proof.


\end{proof}

For this paragraph, we set $f_0=f_{\mu}$ and the indices are considered $\mod \mu$. By symmetry, we may suppose that $|\{e_1,\ldots,e_\mu\}\cap Z| \geq |\{f_1,\ldots,f_\mu\}\cap Z|$. Now for every $i \in \{1,\ldots,t\}$ such that $\{f_{j-1},e_j\}\subseteq E(M_i)$ for some $j \in \{1,\ldots,\mu\}$, let $M'_i=M_i/e_j-f_{j-1 }$ and for all $i \in \{1,\ldots,t\}$ such that $\{f_{j-1},e_j\}\cap E(M_i)=\emptyset$ for all $j \in \{1,\ldots,\mu\}$, let $M'_i=M_i$. By Proposition \ref{delcon}, we obtain that $M'_i$ is a 2-base matroid for all $i =1,\ldots,t$. Let $M'$ be the direct sum of $M'_i,i=1,\ldots,t$. As $M'$ is smaller than $M$, there is a basis $B'$ of $M'$ which is rainbow such that $|Z \cap B'|\geq \frac{|Z\cap E(M')|}{2}$. Now let $B=B'\cup \{e_1,\ldots,e_\mu\}$. Observe that as the $e_j$ are non-loop elements in different components of $M$, we have $$r_M(B)=r_{M'}(B')+r_M(\{e_1,\ldots,e_\mu\})=|B'|+\mu=|B|.$$ As $$|B|=\mu+|B'|=\frac{1}{2}(2\mu+|E(M')|)=\frac{1}{2}|E(M)|,$$ we obtain that $B$ is a basis of $M$. Further, by construction, $B$ is rainbow with respect to $\mathcal{P}$. Finally, we have \begin{align*}|Z \cap B|&=|Z \cap B'|+|Z \cap \{e_1,\ldots,e_\mu\}|\\ &\geq \frac{|Z \cap E(M')|}{2}+\frac{|Z \cap \{e_1,\ldots,e_\mu,f_1,\ldots,f_\mu\}|}{2}\\&=\frac{|Z|}{2},\end{align*} a contradiction to $M$ being a counterexample.
\end{proof}

We are now ready to prove Theorem \ref{doublecover}. For technical reasons, we prove the following slightly stronger statement.

\begin{Theorem}
Let $M$ be a 2-base matroid and $\mathcal{P}$ a 2-bounded subpartition of $E(M)$. Then there is a set of $\log(|Z|)+1$ rainbow bases of $M$ that covers $Z$.
\end{Theorem}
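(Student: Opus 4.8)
The idea is to use Lemma~\ref{main} iteratively, each time extracting a single rainbow basis that captures at least half of the still-uncovered elements, and to count how many iterations are needed until everything is covered. First I would observe that a $2$-base matroid $M$ is in particular the direct sum of a single $2$-base matroid (namely $M$ itself), so Lemma~\ref{main} applies with $t=1$. One technical point: the statement allows $\mathcal{P}$ to be a $2$-bounded \emph{subpartition} of $E(M)$ rather than a $2$-uniform partition of $E(M)$, whereas Lemma~\ref{main} is stated for a $2$-uniform partition. This is harmless: extend $\mathcal{P}$ to a $2$-uniform partition $\mathcal{P}'$ of $E(M)$ by pairing up, in any way, the elements not already covered by a $2$-element class of $\mathcal{P}$ (adding a dummy parallel element if $|E(M)|$ is odd, or noting that rainbow-ness with respect to the finer $\mathcal{P}$ is implied by rainbow-ness with respect to $\mathcal{P}'$). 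So I may as well apply Lemma~\ref{main} directly with the hypothesis as given.

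\smallskip

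\textbf{The iteration.} Set $Z_0 = Z$. Given $Z_j \subseteq E(M)$ nonempty, apply Lemma~\ref{main} (with the set ``$Z$'' of the lemma taken to be $Z_j$) to obtain a rainbow basis $B_{j+1}$ of $M$ with $|Z_j \cap B_{j+1}| \geq |Z_j|/2$. Put $Z_{j+1} = Z_j \setminus B_{j+1}$, so that $|Z_{j+1}| \leq |Z_j|/2$. By induction $|Z_j| \leq |Z|/2^j$, hence $Z_j = \emptyset$ as soon as $2^j \geq |Z|$, i.e.\ after at most $\lceil \log(|Z|) \rceil$ steps; including a final step to be safe, $\log(|Z|)+1$ rainbow bases $B_1, \dots, B_{\lceil\log(|Z|)\rceil+1}$ suffice to cover $Z$, since $Z = (Z\cap B_1) \cup (Z\cap B_2) \cup \cdots$ telescopes to cover all of $Z$ once $Z_j$ empties. (If $Z = \emptyset$ the statement is vacuous; if $|Z|=1$ a single rainbow basis containing that element, which exists by Proposition~\ref{dfqdqw}, suffices, consistent with the bound.) Theorem~\ref{doublecover} then follows by taking $Z = E(M)$, for which $\log(|Z|)+1 = O(\log|E(M)|)$.

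\smallskip

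\textbf{Where the work is.} The only nontrivial ingredient is Lemma~\ref{main}, which is already established; the present statement is essentially a clean packaging of it via the standard ``halving'' argument for covering problems. The reason for phrasing the theorem with an arbitrary target set $Z$ and with a $2$-bounded subpartition, rather than directly with $Z = E(M)$ and a $2$-uniform partition, is exactly so that it can be invoked in an induction or in applications where only part of the ground set needs covering; no extra idea is needed beyond checking that Lemma~\ref{main}'s hypotheses survive passing to subpartitions, which is immediate since a set rainbow with respect to a $2$-uniform coarsening is a fortiori rainbow with respect to the subpartition. I would also remark that each $B_j$ produced is a genuine basis of $M$ (not merely an independent set), so the cover is by rainbow \emph{bases}, matching the theorem's claim.
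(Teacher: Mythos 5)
Your proof is correct and follows essentially the same route as the paper: repeatedly apply Lemma \ref{main} (the paper phrases this as an induction on $|Z|$) to extract a rainbow basis covering at least half of the uncovered part of $Z$, so that $\lfloor\log|Z|\rfloor+1$ bases suffice. The only cosmetic differences are that the paper dismisses the subpartition issue with a one-line remark, and your count is cleaner if stated as $\lfloor\log|Z|\rfloor+1$ directly from the halving rather than via $\lceil\log|Z|\rceil$ plus a ``safety'' step (also, no dummy parallel element is ever needed, since $|E(M)|$ is even for a 2-base matroid and the elements outside the 2-element classes can simply be paired up).
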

\begin{proof}
Clearly, we may suppose that $\mathcal{P}$ is 2-uniform. We proceed by induction on $|Z|$. For $|Z|=1$, the statement immediately follows from Lemma \ref{main}. We may hence suppose that $|Z|\geq 2$. By Lemma \ref{main}, there is a basis $B$ of $M$ which is rainbow with respect to $\mathcal{P}$ and satisfies $|B \cap Z|\geq \frac{1}{2}|Z|$. Inductively, $Z-B$ can be covered by a set of $\log(|Z-B|)+1$ rainbow bases of $M$. Hence $Z$ can be covered by a set of $\log(|Z-B|)+2$ bases of $M$ which are rainbow with respect to $\mathcal{P}$. As $\log(|Z-B|)\leq \log(\frac{1}{2}|Z|)=\log(|Z|)-1$, the statement follows.
\end{proof}
\section{Conclusion}\label{conc}
We deal with packing and covering problems involving the common bases of two matroids. Our work leaves many questions open, for example Conjectures \ref{optim} and \ref{optim2}. It is not even clear whether the following much weaker statement is true. We think a proof of it could be a first step in the direction of Conjectures \ref{optim} and \ref{optim2}.

\begin{Conjecture}\label{optim3}
There is an integer $k$ such that for every $k$-base matroid and every 2-bounded partition $\mathcal{P}$ of $E(M)$, we can cover $M$ by $k$ bases one of which is rainbow.
\end{Conjecture}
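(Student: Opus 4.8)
The target statement (Conjecture \ref{optim3}) asks for a \emph{fixed} constant $k$ such that every $k$-base matroid $M$ with a $2$-bounded partition $\mathcal{P}$ can be covered by $k$ bases, at least one of which is rainbow. The natural approach is to produce a single rainbow basis $B_0$ using the machinery of Section \ref{k2}, and then cover the remaining elements by $k-1$ further (not necessarily rainbow) bases using the ordinary base-packing/covering theory for matroids. The heart of the matter is therefore to engineer the constant $k$ so that both tasks fit together.

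\textbf{Step 1: extract one rainbow basis.} First I would reduce to the case where $\mathcal{P}$ is $2$-uniform (pad one-element classes by enlarging the ground set with parallel copies, or simply work with a $2$-bounded subpartition as in the strengthened form of Theorem \ref{doublecover}). Since $M$ is a $k$-base matroid it contains two disjoint bases, so by Proposition \ref{dfqdqw} (or directly by Lemma \ref{main} applied with $t=1$ and $Z$ a pair of disjoint bases of a $2$-base restriction) one can find a rainbow basis $B_0$ of an appropriate $2$-base restriction, hence of $M$ itself. Concretely: pick a factorization $(C_1,\dots,C_k)$ of $M$ into bases; then $C_1\cup C_2$ spans a $2$-base matroid $M|(C_1\cup C_2)$, and Proposition \ref{dfqdqw} hands us a rainbow basis $B_0$ of it, which (since $C_1\cup C_2$ contains a basis of $M$) is a basis of $M$.

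\textbf{Step 2: cover the rest with ordinary bases.} After removing $B_0$, consider $M/B_0$ --- no, better, consider the matroid $M$ restricted to $E(M)-B_0$; this set need not span $M$, but it is covered by $C_i-B_0$ for $i=1,\dots,k$, i.e.\ by $k$ independent sets of $M$. Each $C_i-B_0$ is independent in $M$, hence extends to a basis $B_i$ of $M$ (by Proposition \ref{trivmat2}-type arguments, or just matroid augmentation). Then $B_0\cup B_1\cup\dots\cup B_k\supseteq B_0\cup\bigcup_i(C_i-B_0)=E(M)$, giving a cover by $k+1$ bases one of which is rainbow. To shave this down to $k$ bases, I would be slightly more careful: $E(M)-B_0$ is covered by $k$ independent sets, but in a $k$-base matroid with one basis removed we expect it to be coverable by fewer. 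Indeed $M|(E(M)-B_0)$ has the property that each element lies in at most $k$ of the $C_i$, but more to the point, by the matroid covering theorem (Edmonds), $E(M)-B_0$ is coverable by $\max_{Z\subseteq E(M)-B_0}\lceil |Z|/r_M(Z)\rceil$ independent sets of $M$; using that $M$ is a $k$-base matroid one bounds this by $k-1$ after removing a spanning basis' worth of elements. So the clean route is: choose $B_0$ to be a rainbow basis (Step 1), then invoke that $E(M)\setminus B_0$ is covered by $k-1$ independent sets of $M$, each extended to a basis, yielding the desired cover by $k$ bases.

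\textbf{Main obstacle.} The delicate point is that removing a rainbow basis $B_0$ need not reduce the ``covering number'' of the complement from $k$ to $k-1$ in general --- there could be a dense set $Z\subseteq E(M)-B_0$ with $|Z|/r_M(Z)$ still essentially $k$. To handle this I would allow $k$ to be larger than the number of bases the matroid factors into: take $M$ a $k$-base matroid, extract a rainbow basis $B_0$, and then note $E(M)\setminus B_0$ is covered by the $k$ independent sets $C_i\setminus B_0$; if this forces $k+1$ bases, simply rename, i.e.\ prove the statement with the constant $k$ chosen so that every $(k-1)$-base matroid admits a rainbow basis and its complement is covered by $k-1$ independent sets --- $k=3$ should already work, since a $2$-base matroid has a rainbow basis $B_0$ (Proposition \ref{dfqdqw}) and $E(M)-B_0$ is itself independent (it is contained in the complementary basis), hence is one independent set, extendable to one basis; together with $B_0$ that is $2$ bases, and we may pad to $3$. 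Thus the real content is just Proposition \ref{dfqdqw} plus a counting argument, and I expect the only genuine subtlety to be making the reduction to $2$-uniform $\mathcal{P}$ rigorous and checking that the padding does not destroy the $k$-base property; this is routine but must be done carefully.
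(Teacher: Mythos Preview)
The statement you are attempting to prove is presented in the paper as an \emph{open conjecture}; the authors give no proof and explicitly describe it as a possible ``first step'' toward Conjectures~\ref{optim} and~\ref{optim2}. There is thus no paper-proof to compare against; the only question is whether your argument actually works. It does not.

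The concrete error is in your final paragraph. You claim that in a $2$-base matroid $M$ with rainbow basis $B_0$, the set $E(M)\setminus B_0$ is independent (``contained in the complementary basis''). This is false: the complement of a basis of a $2$-base matroid need not be a basis. Take $M=M(K_4)$ with the partition of Figure~\ref{drftg} (each class a perfect matching). Every spanning tree of $K_4$ is either a Hamiltonian path or a star; a path is never rainbow because its two end-edges are disjoint and hence lie in the same matching, so every rainbow basis is a star, and the complement of a star is a triangle---a circuit. Thus no choice of rainbow $B_0$ makes $E(M)\setminus B_0$ independent, and your two-bases cover collapses. This is precisely the example the paper uses to illustrate that the naive approach fails.

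Your broader Step~2 strategy---take \emph{any} rainbow basis $B_0$ and cover $E(M)\setminus B_0$ by $k-1$ independent sets---does not survive the obstruction you yourself flag. If $Z\subseteq E(M)$ is tight (so $|Z|=k\,r_M(Z)$) and $B_0\cap Z$ is not a basis of $M|Z$, then $|Z\setminus B_0|>(k-1)\,r_M(Z)$ while $r_M(Z\setminus B_0)\le r_M(Z)$, so Edmonds' covering theorem already forces $k$ independent sets on $Z\setminus B_0$ alone. Nothing in Proposition~\ref{dfqdqw} or Lemma~\ref{main} produces a rainbow basis that restricts to a basis of every tight set, and that is exactly what would be needed. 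Your ``simply rename'' manoeuvre does not rescue this: the conjecture ties the number of covering bases to the \emph{same} $k$ for which $M$ is $k$-base, and a $k$-base matroid (having $|E|=k\,r(M)$) is never a $(k-1)$-base matroid, so the correct but weaker $(k+1)$-cover you obtain in passing cannot be re-indexed into the desired statement.
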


Further, we would like to understand whether the results in Section \ref{cover} could be extended to the case $k=2$. If true, the following would be an improvement on Theorem \ref{doublecover}.
\begin{Conjecture}
There is an integer $k$ such that for every 2-base matroid $M$ and every 2-bounded partition of $E(M)$, we can cover $E(M)$ by a set of $k$ rainbow bases.
\end{Conjecture}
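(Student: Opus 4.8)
The plan is to reduce the statement to a question about covering a single rainbow independent set, and then to attack that question with the tight-set machinery of Section~\ref{k2}. First, since $|E(M)|=2r_M(E(M))$ is even, a $2$-bounded partition has an even number of singleton classes, so by pairing them up (which only strengthens the rainbow constraint) we may assume $\mathcal P$ is $2$-uniform, exactly as in the proof of Theorem~\ref{doublecover}. Now apply Lemma~\ref{fzu}: $E(M)$ decomposes as $X_1\cup X_2\cup X_3$, where $X_1$ is a rainbow basis of $M$ and $X_2,X_3$ are rainbow sets independent in $M$ (in fact $X_2\subseteq B_1$ and $X_3\subseteq B_2$ for a fixed factorisation $(B_1,B_2)$ of $M$). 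Hence it suffices to establish the following: there is an absolute constant $c$ such that every rainbow independent set of a $2$-base matroid equipped with a $2$-uniform partition can be covered by $c$ rainbow bases. Granting this, $X_1$ is already a rainbow basis and $X_2,X_3$ each contribute $c$ rainbow bases, so the theorem would hold with $k=2c+1$.

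To prove the reduced statement one would try to show, more ambitiously, that a rainbow independent set $Y$ of a $2$-base matroid $M$ with $2$-uniform $\mathcal P$ always extends to a single rainbow basis. I would argue by induction on $|E(M)|$, and subject to that on the number of nontrivial tight sets, mirroring the proof of Lemma~\ref{main}. If $M$ has a nontrivial tight set $T$ that is \emph{compatible} with $Y$ — meaning that no partition class meets both $T$ and $E(M)\setminus T$ in an element of $Y$ or in a partner of such an element — then $M|T$ and $M/T$ are again $2$-base matroids by Proposition~\ref{kontrahierbaum}, $Y\cap T$ extends to a rainbow basis $B'$ of $M|T$ and $Y\setminus T$ to a rainbow basis $B''$ of $M/T$ by induction, and $B'\cup B''$ is the desired rainbow basis of $M$ by Proposition~\ref{trivmat2}. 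If $M$ has no nontrivial tight set, then Proposition~\ref{delcon} lets us contract one element and delete another at will without leaving the class of $2$-base matroids, and one would try to route $Y$ into a rainbow basis one partner-chain at a time, using the auxiliary sequence $e_1,f_1,\dots,e_\mu,f_\mu$ constructed in the Claim inside the proof of Lemma~\ref{main}.

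The crux — and the reason this remains a conjecture — is that a tight set need not be compatible with $Y$. A class $\{y,\bar y\}$ with $y\in Y$ but $\bar y\notin Y$ may straddle $T$ and $E(M)\setminus T$, so the supply of ``free'' partners available inside $M|T$ no longer matches the partition, and the restriction of $M|T$ to the surviving classes can fail to have full rank; then the recursion does not split into two $2$-base subproblems, and one is forced either to spend an extra rainbow basis at each such step — which merely recovers the $O(\log|E(M)|)$ bound of Theorem~\ref{doublecover} — or to discard some elements of $Y$, destroying the inductive invariant. This obstruction is exactly what disappears when $k\ge 3$: a third disjoint basis absorbs the partners of $Y$, which is the content of Lemmas~\ref{foresttree} and~\ref{foresttree2}, and at $k=2$ there is no such slack. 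Overcoming it would seem to require a genuinely different tool — for instance, showing that the fractional covering number of $E(M)$ by rainbow bases is bounded by a constant (a weighted strengthening of Lemma~\ref{main} would give this) and then controlling the integrality gap, or constructing the $O(1)$ rainbow bases directly by an augmenting-path argument in a matroid exchange graph that simultaneously tracks basis membership and partner-freeness. I expect this last point to be the main obstacle; everything before it is a routine adaptation of machinery already present in the paper.
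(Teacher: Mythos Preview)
The statement you are attempting is a \emph{conjecture}: the paper does not prove it and explicitly leaves it open (see the remarks preceding Theorem~\ref{doublecover} and the Conclusion). There is therefore no ``paper's own proof'' to compare against.

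Your write-up is not a proof either, and to your credit you say so. The reduction in your first paragraph is correct: passing to a $2$-uniform partition is harmless, and Lemma~\ref{fzu} does reduce the conjecture to the statement that any rainbow independent set of a $2$-base matroid can be covered by $O(1)$ rainbow bases. Note, though, that this reduced statement is not genuinely weaker --- it is equivalent to the conjecture up to the constant, since if $E(M)$ is covered by $k$ rainbow bases then so is every subset of $E(M)$.

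Your second paragraph aims higher still, trying to show that any rainbow independent set $Y$ extends to a \emph{single} rainbow basis, by recycling the tight-set induction from Lemma~\ref{main}. The obstruction you isolate is real: when a nontrivial tight set $T$ cuts a partition class $\{y,\bar y\}$ with $y\in Y$, the restricted partition on $M|T$ is no longer $2$-uniform (some elements of $T$ lose their partners), the inductive hypothesis does not apply, and the argument stalls. This is precisely the slack that the extra basis supplies when $k\geq 3$ in Lemmas~\ref{foresttree} and~\ref{foresttree2}, and its absence at $k=2$ is exactly why the paper stops at Theorem~\ref{doublecover}. Your diagnosis matches the paper's, but neither you nor the authors have a way around it; what you have written is a thoughtful discussion of the difficulty, not a proof.
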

\section*{Acknowledgement}
We wish to thank K. Bérczi and T. Kir\'aly who made us aware of Conjecture \ref{ahconj}.

\end{document}